\documentclass[11pt,reqno]{amsart}
\usepackage[a4paper,margin=1.1in]{geometry}
\usepackage{amsmath,amssymb,amsthm,mathtools}
\usepackage{booktabs,array}
\usepackage{enumitem}
\usepackage[protrusion=true,expansion=false]{microtype}
\usepackage[hidelinks]{hyperref}
\usepackage[nameinlink,capitalize]{cleveref}
\numberwithin{equation}{section}
\theoremstyle{plain}
\newtheorem{theorem}{Theorem}[section]
\newtheorem{proposition}[theorem]{Proposition}
\newtheorem{lemma}[theorem]{Lemma}
\newtheorem{corollary}[theorem]{Corollary}
\newtheorem*{theoremA}{Theorem A}
\newtheorem*{theoremB}{Theorem B}
\newtheorem*{theoremC}{Theorem C}
\newtheorem*{theoremD}{Theorem D}
\newtheorem*{theoremE}{Theorem E}
\theoremstyle{definition}
\newtheorem{definition}[theorem]{Definition}
\newtheorem{example}[theorem]{Example}

\theoremstyle{remark}
\newtheorem{remark}[theorem]{Remark}

\newcommand{\C}{\mathbb{C}}
\newcommand{\Q}{\mathbb{Q}}
\newcommand{\R}{\mathbb{R}}
\newcommand{\Z}{\mathbb{Z}}
\newcommand{\kk}{\Bbbk}
\newcommand{\g}{\mathfrak{g}}
\newcommand{\kalg}{\mathfrak{k}}
\newcommand{\p}{\mathfrak{p}}
\newcommand{\pp}{\mathfrak{p}^{+}}
\newcommand{\pmns}{\mathfrak{p}^{-}}

\newcommand{\N}{\mathcal{N}}
\newcommand{\Cr}{C_{r}}
\newcommand{\End}{\operatorname{End}}
\newcommand{\Hom}{\operatorname{Hom}}

\newcommand{\rk}{\operatorname{rk}}

\newcommand{\Sym}{\operatorname{Sym}}
\newcommand{\Spec}{\operatorname{Spec}}
\newcommand{\GL}{\operatorname{GL}}
\newcommand{\Gr}{\operatorname{Gr}}

\newcommand{\Ab}{\mathcal{A}}
\newcommand{\sm}{\mathrm{sm}}

\usepackage{parskip}
\begin{document}

\title[Higher commuting varieties of symmetric pairs]
{Dimensions and maximal-dimensional components of higher commuting
varieties of symmetric pairs}

\author{Santosha Pattanayak}
\address{Department of Mathematics, Indian Institute of Technology Kanpur,
Kanpur, Uttar Pradesh 208016, India}
\email{santosha@iitk.ac.in}

\date{}

\subjclass[2020]{Primary 17B08, 14L30; Secondary 17B20, 14C05, 13H10, 16G20}
\keywords{Symmetric pair, commuting variety, nilpotent cone, Hermitian
symmetric space, abelian nilpotent subspace, punctual Quot scheme,
Gorenstein algebra, Lagrangian subvariety}

\begin{abstract}
Let $\mathfrak p$ be the $(-1)$-eigenspace of a complex semisimple
symmetric pair. For every $r\ge2$, we prove
$\dim C_r(\mathcal N(\mathfrak p))\le
\lfloor r\dim\mathfrak p/2\rfloor$ and show that every irreducible
component is isotropic for a natural family of alternating forms.
For a nonzero Hermitian pair with $\mathfrak g$ simple and
$\mathfrak p=\mathfrak p^-\oplus\mathfrak p^+$, put
$d=\dim\mathfrak p^+$ and
$\ell=\operatorname{rk}(\mathfrak g,\mathfrak k)$. Then
\[
 \dim C_r(\mathcal N(\mathfrak p))=rd,\qquad
 \dim C_r(\mathfrak p)=\max\{rd,\,2d+(r-1)\ell\}.
\]
In this case we also determine the maximal-dimensional components of
$C_r(\mathfrak p)$; for $r\ge3$, the only maximal-dimensional nilpotent
components are $(\mathfrak p^+)^r$ and $(\mathfrak p^-)^r$.
For arbitrary symmetric pairs,
$\dim C_r(\mathcal N(\mathfrak p))=ra+m$ for all large $r$, where $a$ is
the largest dimension of an abelian subspace of $\mathfrak p$ consisting
of nilpotent elements and $m$ is the dimension of the variety of such
subspaces. 
Applications give dimension and component formulas for equivariant
punctual Quot schemes under explicit framing conditions, and bounds
for punctual Gorenstein loci refined by socle parity.
\end{abstract}

\maketitle 

\section{Introduction}\label{sec:intro}

Commuting varieties connect Lie theory with the geometry of
finite-dimensional representations and punctual moduli spaces.
For a finite-dimensional complex Lie algebra $\mathfrak h$ and an
integer $r\ge1$, its $r$-fold commuting variety is
\[
 C_r(\mathfrak h)
 =\{(x_1,\ldots,x_r)\in\mathfrak h^r:
       [x_i,x_j]=0\text{ for all }i,j\}.
\]
If $\mathfrak h$ is semisimple, let $\N(\mathfrak h)$ denote its
nilpotent cone and set
$C_r(\N(\mathfrak h))=C_r(\mathfrak h)\cap\N(\mathfrak h)^r$.
For $\mathfrak h=\mathfrak{gl}_n$, we use the same notation for
the cone of nilpotent matrices. Unless stated otherwise, the
varieties considered below carry their reduced structures.

For two entries, the geometry is governed by classical
irreducibility theorems. Motzkin--Taussky and Gerstenhaber proved
the irreducibility of the variety of commuting pairs of matrices
\cite{MotzkinTaussky,Gerstenhaber}, and Richardson established the
corresponding result for reductive Lie algebras
\cite[Theorem~A]{Richardson}.
For nilpotent matrices, Baranovsky proved that
$C_2(\N(\mathfrak{gl}_n))$ is irreducible of dimension $n^2-1$;
see also Basili \cite{Baranovsky,Basili}.
For a complex semisimple Lie algebra $\mathfrak h$, Premet showed
that $C_2(\N(\mathfrak h))$ is equidimensional of dimension
$\dim\mathfrak h$ and described its irreducible components in
terms of nilpotent orbits \cite{Premet}.

For longer tuples, irreducibility fails in general and the
dimension becomes more difficult to determine. In particular, for
$r\ge4$, the variety $C_r(\mathfrak{gl}_n)$ is irreducible if and
only if $n\le3$ \cite{Gerstenhaber,Guralnick}.
Levy--Ngo--\v{S}ivi\'c obtained the exact formula
\[
 \dim C_r(\N(\mathfrak{gl}_n))
 =(r+1)\left\lfloor\frac{n^2}{4}\right\rfloor
 \qquad(n\ge4,\ r\ge7)
\]
over algebraically closed fields of characteristic different
from $2$ and $3$ \cite[Theorem~4.8]{LevyNgoSivic}.
Under the same restrictions,
\[
 \dim C_r(\mathfrak{gl}_n)
 =(r+1)\left\lfloor\frac{n^2}{4}\right\rfloor+r,
\]
except that $\dim C_7(\mathfrak{gl}_4)=40$
\cite[Corollary~4.9]{LevyNgoSivic}.
Their work also formulates a conjecture predicting eventual
linear growth in terms of varieties of elementary subalgebras.

In this paper we study higher commuting varieties associated with
complex semisimple symmetric pairs. Let $\theta$ be an involution
of a complex semisimple Lie algebra $\g$, with eigenspace
decomposition
\[
 \g=\kalg\oplus\p,\qquad
 \kalg=\g^\theta,\qquad
 \p=\{x\in\g:\theta(x)=-x\}.
\]
Let $G$ be the adjoint group of $\g$, let
$K=(G^\theta)^\circ$, and put $\N(\p)=\N(\g)\cap\p$.
Although $\p$ need not be a Lie subalgebra, the commuting equations
define closed subvarieties
\[
 C_r(\p)=C_r(\g)\cap\p^r,\qquad
 C_r(\N(\p))=C_r(\g)\cap\N(\p)^r.
\]
A Cartan subspace $\mathfrak a\subseteq\p$ is a maximal abelian
subspace consisting of semisimple elements. Its dimension
$\ell=\rk(\g,\kalg)$ is the rank of the symmetric pair.
The invariant theory underlying these constructions was
developed by Kostant--Rallis \cite{KostantRallis}.

Even for two entries, symmetric pairs exhibit phenomena absent
from Richardson's theorem: $C_2(\p)$ can be reducible.
Panyushev proved that, for a symmetric pair of maximal rank
($\ell=\rk\g$), it is an irreducible normal complete intersection
whose defining ideal is generated by the quadratic commuting
equations
\cite[Theorem~3.2, \S3.5, and Corollary~4.4]{Panyushev94}.
Further irreducibility results were obtained in
\cite{Panyushev04,PanyushevYakimova,SabourinYu};
the complete classification is recorded in
\cite[\S4.2 and Table~3]{BuloisIrregular}.
For arbitrary semisimple symmetric pairs, the dimension
$\dim\p+\ell$ and the component
$\overline{K\cdot\mathfrak a^2}$ are classical
\cite[\S3.5 and Proposition~3.7]{Panyushev94};
the uniqueness of this maximal-dimensional component is stated
in \cite[Corollary~2.3]{SabourinYu}; see also \cite{BuloisIrregular}.
In the nilpotent setting, Bulois proved that
$\dim C_2(\N(\p))=\dim\p$ and described its
maximal-dimensional irreducible components
\cite[Proposition~1.5]{Bulois}. 
Invariant-theoretic quotients provide a related direction. Panyushev proved a Chevalley-type restriction theorem in the
maximal-rank case \cite[Theorem~4.5]{Panyushev94}, and
Tevelev established a multivariable restriction theorem for
symmetric pairs \cite{Tevelev}.
Nadimpalli and the author studied the reducedness of higher
commuting-scheme quotients for classical symmetric pairs through
Chevalley restriction \cite{NadimpalliPattanayak}.

Our results concern dimensions and maximal-dimensional
irreducible components for arbitrary tuple length. The first
statement combines \cref{thm:universal,prop:universal-isotropy}.
Let $B$ be the Killing form of $\g$, and identify
$\p^r$ with $\p\otimes\C^r$.
For $J\in\Lambda^2(\C^r)^*$, the symmetric form $B|_\p$
and the alternating form $J$ define a constant alternating
form $B|_\p\otimes J$ on $\p^r$.
Here a subvariety is called isotropic if the form restricts
to zero on its smooth locus.

\begin{theoremA}
For every complex semisimple symmetric pair and every $r\ge2$,
\[
 \dim C_r(\N(\p))
 \le\left\lfloor\frac{r\dim\p}{2}\right\rfloor.
\]
Every irreducible component of $C_r(\N(\p))$ is simultaneously
isotropic for all the forms $B|_\p\otimes J$,
$J\in\Lambda^2(\C^r)^*$.
\end{theoremA}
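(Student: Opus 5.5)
The plan is to reduce everything to the case $r=2$, where Bulois's result \cite[Proposition~1.5]{Bulois} applies, and then deduce the dimension bound from isotropy by linear algebra on tangent spaces. Since $J\mapsto B|_\p\otimes J$ is linear, it suffices to prove isotropy for decomposable forms $J=e^*\wedge f^*$ with $e^*,f^*\in(\C^r)^*$. For such $J$, consider the linear map
\[
\varphi=\varphi_{e^*,f^*}\colon\p^r\to\p^2,\qquad x\mapsto\Bigl(\textstyle\sum_i e^*_i x_i,\ \sum_i f^*_i x_i\Bigr).
\]
Two facts make this useful. First, linear combinations of pairwise commuting nilpotent elements of $\p$ are again nilpotent, commute, and lie in $\p$, so $\varphi$ maps $C_r(\N(\p))$ into $C_2(\N(\p))$. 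Second, $B|_\p\otimes J=\varphi^*\omega$, where $\omega((y,y'),(z,z'))=B(y,z')-B(y',z)$ is the standard symplectic form on $\p^2=\p\otimes\C^2$. Hence isotropy of a component $Z\subseteq C_r(\N(\p))$ follows once we know two things: every irreducible component $Y$ of $C_2(\N(\p))$ is isotropic for $\omega$, and isotropy passes to arbitrary closed subvarieties of $Y$, in particular to $\overline{\varphi(Z)}$.

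The second point I would handle with a resolution. Take $\pi\colon\tilde Y\to Y$, and consider the closed $2$-form $\pi^*\omega$ on the smooth variety $\tilde Y$. It vanishes on the dense open set where $\pi$ is an isomorphism onto the smooth locus of $Y$, hence it vanishes identically. For a subvariety $S\subseteq Y$, lift a dense open part of $S$ to a locally closed subvariety of $\tilde Y$. By generic smoothness, $d\varphi$ maps the tangent space at a generic smooth point of $Z$ into the tangent space of $\overline{\varphi(Z)}$ at a generic point. Isotropy on generic points then spreads over the whole smooth locus of $Z$ by closedness of the vanishing condition.

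For the first point, $\dim C_2(\N(\p))=\dim\p=\tfrac12\dim\p^2$ by Bulois, so each maximal-dimensional component is a candidate Lagrangian. I would verify isotropy directly from the tangent equations. At a point $(x_1,x_2)$, a tangent vector $(y_1,y_2)$ satisfies $[x_1,y_2]+[y_1,x_2]=0$. Consider the $K$-stable stratification of $C_2(\N(\p))$ by the $K$-orbit of $x_1$. A component is the closure of a set of the form $K\cdot(\{e\}\times(\mathfrak z_\p(e)\cap\N))$, and its tangent space at a generic point is spanned by two kinds of vectors:
\begin{enumerate}[label=(\roman*)]
\item the vectors $([\xi,e],[\xi,z])$ with $\xi\in\kalg$;
\item the vectors $(0,v)$ with $v$ tangent to $\mathfrak z_\p(e)\cap\N$ at $z$.
\end{enumerate}
Pairing (i) with (i) gives $B([\xi,e],[\eta,z])-B([\xi,z],[\eta,e])=B(\xi,[e,[\eta,z]]-[z,[\eta,e]])=B(\xi,[\eta,[e,z]])=0$ by invariance and the Jacobi identity. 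Pairing (i) with (ii) gives $B([\xi,e],v)=-B(\xi,[e,v])=0$, since $v$ lies in $\mathfrak z_\p(e)$. Pairing (ii) with (ii) is trivially zero. The lower-dimensional components have the same shape, so the same computation applies to them. This is the step I expect to be the main obstacle: one must make sure that every component, not only those of dimension $\dim\p$, is generically of this conormal-like form. An alternative route is to use Ginzburg's argument for the zero fibre of the moment map $\p\oplus\p\to\kalg$, restricted to the nilpotent part.

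Finally, for the dimension bound, let $W=T_zZ$ at a generic smooth point $z$; then $W$ is isotropic for all $B|_\p\otimes J$ simultaneously. If $r$ is even, choose $J$ nondegenerate. Then $B|_\p\otimes J$ is symplectic on $\p^r$, so $\dim W\le r\dim\p/2$. If $r$ is odd, let $U\subseteq\p$ be the span of all coordinates of vectors in $W$, and let $J$ be nondegenerate on the first $r-1$ coordinates with kernel $e_r$. Using the forms $e_i^*\wedge e_r^*$, the kernel of the projection $W\to\p^{r-1}$ consists of vectors $(0,\dots,0,v)$ with $v\in U\cap U^\perp$. I would then bound $\dim W$ by combining three estimates:
\begin{enumerate}[label=(\alph*)]
\item $\dim W\le(r-1)\dim U/2+\dim(U\cap U^\perp)$, from the isotropic image in $U^{r-1}$ and the kernel;
\item $W\subseteq U^r$, which gives $\dim W\le r\dim U$;
\item $\dim U+\dim(U\cap U^\perp)\le\dim\p$.
\end{enumerate}
Optimizing over $\dim U$ should give $\dim W\le r\dim\p/2$, and integrality then yields the floor.
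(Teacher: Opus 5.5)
Your isotropy argument works. The identity $B|_\p\otimes(e^*\wedge f^*)=\varphi^*\omega$, the inheritance of isotropy by closed subvarieties through a resolution, and the tangent computation at $(e,z)$ are all correct. The step you flag as the main obstacle follows at once from the finiteness of nilpotent $K$-orbits in $\p$. The generic first coordinate of any component $Y$ of $C_2(\N(\p))$ lies in a single orbit $K\cdot e$. Hence $Y=\overline{K\cdot(\{e\}\times C)}$ for an irreducible component $C$ of $\mathfrak z_\p(e)\cap\N(\p)$, whatever $\dim Y$ is.

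The paper uses the same input but works directly on $Z\subseteq\Cr(\N(\p))$. The one-forms $B(x_j,dx_i)$ vanish on $Z_{\sm}$, and their differentials are $-\omega_{ij}$, so it needs neither the reduction to $r=2$ nor the resolution.

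Your even-$r$ bound is also correct, and it has a real advantage: it avoids Bulois's theorem. Your $r=2$ computation already makes every component of $C_2(\N(\p))$ isotropic in the symplectic space $\p^2$, so each has dimension at most $\dim\p$. The paper instead takes the bound from Bulois plus \cref{lem:pair-projection}, whose equality case it needs later for the component classification.

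The gap is the odd-$r$ step. Estimate (a) is false. The image of $W$ in $U^{r-1}$ is isotropic for $B|_U\otimes J'$, but $B|_U$ may be degenerate. In that case isotropic subspaces of $U^{r-1}$ can have dimension $\frac{r-1}2(\dim U+\dim(U\cap U^\perp))$; if $U$ is totally isotropic, all of $U^{r-1}$ is isotropic. Indeed, (a) and (c) together would give $\dim W\le\frac{r-1}2\dim\p$. That is contradicted by $W=(\pp)^r$ in the Hermitian case, a simultaneously isotropic tangent space of a component with dimension $\frac r2\dim\p$.

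Your kernel claim also fails. The forms $e_i^*\wedge e_r^*$ give $B(u_i,v)=0$ only for $i<r$, never $B(u_r,v)=0$. For example, $\p\otimes e_r$ satisfies every isotropy condition and has kernel all of $\p$, whereas $U\cap U^\perp=0$.

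The cleanest repair is the linear version of the paper's projection inequality. Choosing a basis of $W^*$ among coordinate functionals gives $(r-1)\dim W\le\sum_{i<j}\dim\pi_{ij}(W)$. Each $\pi_{ij}(W)$ is isotropic for the nondegenerate form $\omega_{ij}$ on $\p^2$, so it has dimension at most $\dim\p$. This handles every $r\ge2$ at once.

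Alternatively, let $U'$ be the span of the first $r-1$ coordinates. Bound the image by $\frac{r-1}2(\dim U'+\dim(U'\cap U'^\perp))$ and the kernel by $\dim U'^\perp$, then optimize over $\dim U'$; this also gives $\frac r2\dim\p$.
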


The dimension bound follows from Bulois's two-variable result
and a coordinate-projection inequality
(\cref{lem:pair-projection}).
The equality case of this inequality supplies additional
geometric information: for $r\ge3$, a component attaining the
half-dimensional bound must be a product of its coordinate
images. This observation is central to the component
classification below.

The bound is attained for Hermitian symmetric pairs. Such a
pair admits a three-grading
\[
 \g=\pmns\oplus\kalg\oplus\pp,\qquad
 \p=\pmns\oplus\pp,
\]
in which $\pp$ and $\pmns$ are abelian subalgebras consisting of
nilpotent elements; see \cref{def:hermitian}.
The following statement collects
\cref{thm:hermitian,thm:hermitian-max-components,%
thm:full-hermitian,cor:full-components}.

\begin{theoremB}
Let $(\g,\kalg)$ be a nonzero simple complex Hermitian symmetric
pair. Put $d=\dim\pp$ and $\ell=\rk(\g,\kalg)$, and choose a
Cartan subspace $\mathfrak a\subseteq\p$.
For every $r\ge2$,
\[
 \dim C_r(\N(\p))=rd,\qquad
 \dim C_r(\p)=\max\{rd,\,2d+(r-1)\ell\}.
\]
The subvarieties $(\pp)^r$ and $(\pmns)^r$ are distinct
maximal-dimensional irreducible components of
$C_r(\N(\p))$ for every $r\ge2$.
For $r\ge3$, they are the only such components.

For every $r\ge2$, the maximal-dimensional irreducible
components of $C_r(\p)$ are precisely those among
\[
 (\pp)^r,\qquad(\pmns)^r,\qquad
 \overline{K\cdot\mathfrak a^r}
\]
whose dimensions attain the displayed maximum. Their
dimensions are $rd$, $rd$, and $2d+(r-1)\ell$, respectively.
\end{theoremB}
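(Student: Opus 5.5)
Since Hermitian pairs have $\dim\p=2d$, Theorem A gives $\dim C_r(\N(\p))\le\lfloor 2rd/2\rfloor=rd$. Because $\pp$ and $\pmns$ are abelian and consist of nilpotent elements, $(\pp)^r$ and $(\pmns)^r$ are irreducible closed subsets of $C_r(\N(\p))$ of dimension $rd$. Hence $\dim C_r(\N(\p))=rd$, and both products are maximal-dimensional components. They are distinct because $\pp\cap\pmns=0$.

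For uniqueness when $r\ge3$, I will use the equality case of the coordinate-projection inequality (\cref{lem:pair-projection}). Let $Z$ be a component of dimension $rd$. Then $Z$ is the product $Z_1\times\dots\times Z_r$ of its coordinate images. Moreover, each pair projection $Z_{ij}=Z_i\times Z_j$ must be a component of $C_2(\N(\p))$ of dimension $2d=\dim\p$, since every inequality in the chain is tight. Because $Z$ is a product lying in the commuting variety, every $x\in Z_i$ commutes with every $y\in Z_j$ for $i\ne j$, and by symmetry also $Z_i\times Z_i\subseteq C_2$ once $r\ge3$ (use a third index). The argument for this is that $Z_i\times Z_j\times Z_k$ with $j\ne k$ lets us compare $Z_j$ and $Z_k$ against $Z_i$. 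I will instead argue directly that the linear span $V_i$ of $Z_i$ is an abelian subspace of nilpotent elements. Elements of $Z_i$ commute with all of $Z_j$ and $Z_j$ commutes with $Z_i$, so $V_i$ commutes with $V_j$. Then $\dim Z_i+\dim Z_j=2d$ together with $V_i\subseteq\mathfrak z_\p(V_j)$ forces a lot. Since the $Z_i$ are conical and irreducible, each $\dim Z_i=d$ (by the symmetric roles of the indices, the pairwise sums $2d$ force equal dimensions when $r\ge3$). The key input is that an abelian subspace of $\p$ consisting of nilpotents has dimension at most $d$, with equality only for $\pp$ and $\pmns$; this maximality and uniqueness statement for Hermitian pairs is the main obstacle. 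I would prove it by showing that $V_i+V_j$ is abelian: $V_i$ commutes with $V_j$, and commutation within $V_i$ follows from a third index $k$ via Jacobi/centralizer dimension arguments. The sum is also nilpotent, using commuting nilpotents. Then I invoke the known classification of maximal abelian nilpotent subspaces in Hermitian $\p$, namely the Kostant/Panyushev result that these have dimension $\le d$ with equality exactly for $\p^\pm$. This forces $Z_i=V_i=\pp$ or $\pmns$ for all $i$, with the same choice for all $i$ since $[\pp,\pmns]\ne0$.

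For $C_r(\p)$, I will use the Jordan decomposition. A commuting tuple has a common semisimple part that lies, after $K$-conjugation, in $\mathfrak a$; its centralizer is a smaller reductive symmetric pair $(\g^s,\kalg^s)$. Stratifying by the $K$-orbit type of the semisimple parts, I get a stratum of dimension at most $\dim K\cdot s+\dim(\text{semisimple parts in }\mathfrak z(\mathfrak a_s))\cdot r$ plus $\dim C_r(\N(\p^s))$. Apply the nilpotent result inductively to the simple Hermitian factors of $\p^s$, using Theorem A for the non-Hermitian factors. For the generic stratum this gives $\overline{K\cdot\mathfrak a^r}$, of dimension $(2d-\ell)+r\ell=2d+(r-1)\ell$. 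For the nilpotent stratum it gives $rd$. Intermediate strata will need an explicit check that the sum is strictly smaller, using the graded/rank structure of centralizers in Hermitian pairs. This comparison of intermediate strata is the second obstacle. Finally, the maximal components are read off from the strata achieving the maximum.
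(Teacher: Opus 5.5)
Your argument for $\dim C_r(\N(\p))=rd$, and for $(\pp)^r$ and $(\pmns)^r$ being distinct maximal-dimensional components when $r\ge2$, is exactly the paper's. The uniqueness step for $r\ge3$, however, has a genuine gap. After \cref{lem:pair-projection} gives $Z=Z_1\times\cdots\times Z_r$, you never show that two elements of the same $Z_i$ commute. The Jacobi identity only yields $[V_i,V_i]\subseteq\mathfrak z_{\kalg}(V_j)$ for $j\ne i$, which need not vanish without knowing $V_j$. Nor is ``by symmetry'' available as stated, because the finite group $S_r$ may permute components.

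The missing idea is that a connected group acting on a variety preserves each irreducible component. Apply this to $\GL_r$ (\cref{lem:symmetries}). Permutation matrices give $Z_1=\cdots=Z_r=:Y$, so $Y\times Y\subseteq C_2(\N(\p))$ and the elements of $Y$ commute pairwise. Diagonal matrices make $Y$ a cone. The matrix sending $(x,y,0,\ldots,0)$ to $(x+y,y,0,\ldots,0)$ makes $Y$ closed under addition. Hence $Y$ is a $d$-dimensional abelian nilpotent subspace.

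Your last step then appeals to an unspecified ``Kostant/Panyushev'' classification asserting that such a subspace is $\pp$ or $\pmns$. The paper obtains precisely this statement as a consequence of the theorem you are proving (\cref{cor:hermitian-product-components}), so as written the step is unsupported and risks circularity. Connectedness again supplies it: $Z$ is $K$-stable, so $Y$ is a $K$-submodule and in particular $\operatorname{ad}\xi$-stable. Hence $Y=(Y\cap\pp)\oplus(Y\cap\pmns)$, and irreducibility of $\pp$ and $\pmns$ (\cref{lem:hermitian-basic}) together with $\dim Y=d$ forces $Y=\pp$ or $Y=\pmns$.

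For $C_r(\p)$, your Jordan-decomposition cover by $\p$-Levi families is the paper's framework. Applying Theorem~A to the whole smaller pair $\p_{\mathfrak l}$ suffices; no induction over Hermitian factors is needed. But what you call the second obstacle is the actual mathematical content. Write $c=c_{\mathfrak l}$ and $\delta=\delta_{\mathfrak l}$. The family bound is $\dim X_{\mathfrak l}\le 2d+(r-1)c+\tfrac{r-2}{2}\delta$, and to get $\dim X_{\mathfrak l}\le rd$ for $0<c<\ell$ one needs $\dim\p-\delta\ge 4c$ (\cref{lem:hermitian-plevi-codim}). The paper proves this by splitting $I\subseteq\Delta$ into $A_t$ blocks and one $C_u$/$BC_u$ block, then comparing with $H(\ell)=\dim\p$ using the multiplicities of the $C_\ell$/$BC_\ell$ restricted root system.

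The component classification further needs strictness. At $r=3$ this estimate only gives $\dim X_{\mathfrak l}\le 3d$. Strictness comes from the equality case of the lemma, which occurs only for $\mathrm{CI}(2)$; there one uses $\dim C_3(\p)=10>9$ instead (\cref{lem:intermediate-levi}). Without these estimates, neither the formula for $\dim C_r(\p)$ with $r\ge3$ nor the exclusion of intermediate families from maximal dimension is established.
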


For $r=2$, Theorem~B recovers the results of Panyushev and Bulois
recalled above; its new content lies in the range $r\ge3$ and in the
component statements. The distinction between $r=2$ and $r\ge3$ in the nilpotent
component statement is essential: additional maximal-dimensional
components can occur for $r=2$
(\cref{rem:hermitian-r2,ex:hermitian-r2-extra}).
Theorem~B classifies components of maximal dimension; it makes
no assertion that smaller-dimensional components are absent.
For a direct sum of $t$ nonzero simple Hermitian factors, there
are exactly $2^t$ maximal-dimensional nilpotent components when
$r\ge3$ (\cref{cor:hermitian-product-components}).
For even $r\ge2$, every maximal-dimensional nilpotent component
is Lagrangian for each nondegenerate form $B|_\p\otimes J$:
it is isotropic and has half the ambient dimension
(\cref{cor:hermitian-lagrangian}).

The formula for $C_r(\p)$ is obtained by covering it with finitely many
closed families indexed by centralisers of semisimple elements of $\p$,
each bounded by \cref{thm:universal} applied to a smaller symmetric pair. For Hermitian pairs, the restricted-root codimension estimate
gives the dimension bound. Analysis of its equality case then
excludes all intermediate families from maximal dimension
(\cref{lem:hermitian-plevi-codim,lem:intermediate-levi}).
The two remaining families are the nilpotent commuting variety
and the orbit closure of tuples in a Cartan subspace.

A second viewpoint describes the dependence on $r$ through
Grassmannian parameter spaces. Let $\Ab_e(\p)$ be the projective
variety of $e$-dimensional abelian subspaces of $\p$ contained in
$\N(\p)$, and let $\Ab_0(\p)$ be a point. Set
\[
 a(\p)=\max\{e:\Ab_e(\p)\ne\varnothing\},
 \qquad
 m(\p)=\dim\Ab_{a(\p)}(\p),
\]
with the convention $\dim\varnothing=-\infty$.
These spaces are characteristic-zero analogues of the varieties
of elementary subalgebras studied by
Carlson--Friedlander--Pevtsova
\cite[\S1]{CarlsonFriedlanderPevtsova}.
Stratifying a commuting tuple by the dimension of its linear
span gives the following result
(\cref{prop:span-strata,thm:asymptotic}).

\begin{theoremC}
For every complex semisimple symmetric pair and every $r\ge1$,
\[
 \dim C_r(\N(\p))
 =
 \max_{0\le e\le\min\{r,a(\p)\}}
       \bigl(re+\dim\Ab_e(\p)\bigr).
\]
Consequently, for all sufficiently large $r$,
\[
 \dim C_r(\N(\p))=r\,a(\p)+m(\p).
\]
In this range, the maximal-dimensional irreducible components
of $C_r(\N(\p))$ correspond naturally to those of
$\Ab_{a(\p)}(\p)$.
\end{theoremC}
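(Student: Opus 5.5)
I would prove Theorem~C by stratifying $C_r(\N(\p))$ according to the dimension of the linear span of a tuple. For $0\le e\le r$, let $S_e\subseteq C_r(\N(\p))$ be the locally closed set of tuples $(x_1,\ldots,x_r)$ whose span $V=\langle x_1,\ldots,x_r\rangle$ has dimension exactly $e$. The key observation is that if $x_1,\ldots,x_r\in\N(\p)$ pairwise commute, then their span $V$ is abelian and consists of nilpotent elements. Indeed, commuting nilpotent elements of $\g$ generate an abelian subalgebra whose elements are all nilpotent: one sees this either by simultaneous triangularisation in the adjoint representation, or by noting that a sum of commuting ad-nilpotent endomorphisms is ad-nilpotent. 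Thus $V\in\Ab_e(\p)$, so $S_e$ is empty for $e>a(\p)$, and there is a morphism $\pi_e\colon S_e\to\Ab_e(\p)$ sending a tuple to its span.

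The fibre of $\pi_e$ over $V$ is the set of $r$-tuples in $V$ that span $V$. This is an open dense subset of $V^r\cong\Hom(\C^r,V)$, namely the surjections, and it is nonempty precisely when $e\le r$. Conversely, every point of $\Ab_e(\p)$ arises as a span. Hence $S_e$ is the open subset of the tautological bundle $\Hom(\C^r,\mathcal V)$ over $\Ab_e(\p)$ (restricted from the Grassmannian) consisting of surjections. So $S_e$ is a Zariski-locally trivial fibration with irreducible fibres of dimension $re$, and
\[
\dim S_e=re+\dim\Ab_e(\p).
\]
Since $C_r(\N(\p))=\bigsqcup_{0\le e\le\min\{r,a(\p)\}}S_e$ is a finite union, the first formula follows. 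The bijection $V\mapsto$ (closure of the fibre bundle over each component of $\Ab_e(\p)$) also shows that the irreducible components of $S_e$ correspond to those of $\Ab_e(\p)$, with dimensions shifted by $re$.

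For the asymptotic statement, all $\dim\Ab_e(\p)$ are bounded, say by $D=\max_e\dim\Ab_e(\p)$, which is finite. For $e<a=a(\p)$ and $r\ge a$ we then have
\[
re+\dim\Ab_e(\p)\le r(a-1)+D<ra+m
\]
as soon as $r>D-m$. So for $r\ge\max\{a,\,D-m+1\}$ the maximum is attained only at $e=a$, which gives $\dim C_r(\N(\p))=ra+m$.

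For the components in this range, the closure of $S_a$ contains every maximal-dimensional component. These are exactly the closures of the preimages of the top-dimensional components of $\Ab_a(\p)$. Each such closure is irreducible of dimension $ra+m$, and it cannot be contained in another component, because nothing else has that dimension. This gives the claimed natural correspondence.

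The main obstacle is modest: one must check carefully that spans of commuting nilpotent tuples in $\p$ lie in $\N(\p)$, so that $\pi_e$ lands in $\Ab_e(\p)$, and that $\Ab_e(\p)$ is a closed subvariety of $\Gr(e,\p)$, so that the bundle description and the dimension count are valid. Closedness holds because the conditions "abelian" and "contained in $\N(\p)$" are closed conditions on the Grassmannian.
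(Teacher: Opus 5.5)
Your proposal is correct and follows the paper's proof essentially step for step. Like you, the paper identifies each span stratum with the fibrewise-spanning open subset of $\mathcal S_e^{\oplus r}|_{\Ab_e(\p)}$ and reads off $re+\dim\Ab_e(\p)$; for large $r$ it shows the maximum is attained only at $e=a(\p)$, so that maximal-dimensional components meet the top stratum and correspond to the $m(\p)$-dimensional components of $\Ab_{a(\p)}(\p)$. The differences are minor: the paper uses the sharper threshold $r_0=\max\bigl(\{1,a\}\cup\{1+\lfloor(\dim\Ab_d(\p)-m)/(a-d)\rfloor : 0\le d<a\}\bigr)$ in place of your uniform $r\ge\max\{a,D-m+1\}$, and it verifies via ratios of minors that the span map is a morphism, which you take for granted.
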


Applied to the symmetric pair
$(\mathfrak h\oplus\mathfrak h,\operatorname{diag}\mathfrak h)$,
this gives the corresponding eventual formula for every complex
semisimple Lie algebra $\mathfrak h$. It provides a
characteristic-zero counterpart of the prediction in
\cite[Introduction]{LevyNgoSivic}.
In positive characteristic, elementary subalgebras consist of
commuting elements annihilated by the restricted $p$-power map;
their parameter spaces therefore concern the restricted
nullcone, which need not equal the full nilpotent cone.
For a nonzero semisimple Hermitian pair, putting
$d_0=\frac12\dim\p$, we obtain
\[
 a(\p)=d_0,\qquad
 \dim\Ab_e(\p)=e(d_0-e)\quad(2\le e\le d_0).
\]
Moreover, the $d_0$-dimensional abelian nilpotent subspaces are
exactly the direct sums obtained by choosing the positive or
negative grading piece in each simple Hermitian factor
(\cref{prop:Ab-hermitian}). In particular $m(\p)=0$, so for Hermitian pairs the eventual formula of
Theorem~C already holds for every $r\ge2$; for $\mathfrak{gl}_n$, by
contrast, it fails for small $r$ (for instance
$\dim C_2(\N(\mathfrak{gl}_4))=15\ne12$).

The geometric applications concern equivariant punctual Quot
schemes and Gorenstein loci. Let the group $\{1,-1\}$ act on
$\mathbb A^r$ by scalar multiplication.
For nonnegative integers $m,n,s_+,s_-$, write
$\mathsf Q^{m,n;s_+,s_-}_{r,0}$ for the reduced scheme
parametrising equivariant quotients
\[
 \mathcal O_{\mathbb A^r}\otimes
       (\C^{s_+}\oplus\C^{s_-})
 \twoheadrightarrow\mathcal F
\]
supported at the origin, where the two framing summands have
characters $+1$ and $-1$, and the corresponding eigenspaces of
$H^0(\mathcal F)$ have dimensions $m$ and $n$.
The higher-tuple part of \cref{thm:equivariant-quot} is as follows.

\begin{theoremD}
Let $r\ge3$, $m,n\ge1$, and $s_+,s_-\ge0$. Then
\[
 \dim\mathsf Q^{m,n;s_+,s_-}_{r,0}
 \le rmn+s_+m+s_-n-m^2-n^2.
\]
Equality holds if and only if at least one of the following
two conditions holds:
\[
 \begin{array}{ll}
 \textup{(i)}&s_-\ge n\ \text{ and }\ m\le s_++rn,\\[1mm]
 \textup{(ii)}&s_+\ge m\ \text{ and }\ n\le s_-+rm.
 \end{array}
\]
When equality holds, the number of maximal-dimensional
irreducible components is the number of these conditions
that hold.
\end{theoremD}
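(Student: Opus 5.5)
The plan is to translate $\mathsf Q^{m,n;s_+,s_-}_{r,0}$ into a framed commuting-variety problem for the Hermitian pair $(\mathfrak{gl}_{m+n},\mathfrak{gl}_m\oplus\mathfrak{gl}_n)$, and then quotient by the free action of the structure group. We write $V=\C^m\oplus\C^n$ with $\theta=\mathrm{diag}(1,-1)$. An equivariant quotient supported at the origin is given by three pieces of data:
\begin{itemize}
\item an $r$-tuple of commuting nilpotent operators $x_i$ on $V$, each odd for the grading, i.e.\ $x_i\in\p=\Hom(\C^m,\C^n)\oplus\Hom(\C^n,\C^m)$, because $\{\pm1\}$ acts on the coordinates by $-1$;
\item framing maps $u_+\colon\C^{s_+}\to\C^m$ and $u_-\colon\C^{s_-}\to\C^n$;
\item the condition that $\C[x_1,\ldots,x_r]\cdot(\operatorname{im}u_+ + \operatorname{im}u_-)=V$.
\end{itemize}
Thus $\mathsf Q^{m,n;s_+,s_-}_{r,0}=U/(\GL_m\times\GL_n)$, where $U\subseteq C_r(\N(\p))\times\Hom(\C^{s_+},\C^m)\times\Hom(\C^{s_-},\C^n)$ is the open stable locus. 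The group acts freely on $U$, so
\[
\dim\mathsf Q=\dim U-m^2-n^2\le \dim C_r(\N(\p))+s_+m+s_-n-m^2-n^2 .
\]
By Theorem~B with $d=mn$ we get $\dim C_r(\N(\p))=rmn$, which gives the bound.

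For equality we need a component $Z$ of $C_r(\N(\p))$ of dimension $rmn$ such that $(Z\times\text{framings})\cap U\neq\varnothing$. A component of $U$ of maximal dimension is open in such a product, with $Z$ irreducible. When $r\ge3$, Theorem~B says that $Z$ is $(\pp)^r$ or $(\pmns)^r$. It remains to decide when each of these meets the stable locus. Take $Z=(\pmns)^r$, i.e.\ all $x_i\colon\C^m\to\C^n$ and $x_i|_{\C^n}=0$. Then every product $x_ix_j$ vanishes, and the generated submodule is
\[
\operatorname{im}u_+\;\oplus\;\Bigl(\operatorname{im}u_-+\sum_i x_i(\operatorname{im}u_+)\Bigr).
\]
Stability therefore requires two things:
\begin{itemize}
\item $u_+$ is surjective, so $s_+\ge m$;
\item $\operatorname{im}u_-+\sum_i x_i(\C^m)=\C^n$. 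For generic choices this holds exactly when $n\le s_-+rm$.
\end{itemize}
This is condition~(ii). Symmetrically, $(\pp)^r$ gives condition~(i). The two components are distinct, and each contributes an irreducible open subset of dimension equal to the bound. So the number of maximal components is the number of conditions that hold; distinctness follows because their images in $\mathsf Q$ have generically different $x$-tuples, which are preserved by the group action.

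The main obstacle is the reduction step. We have to check that reduced $\mathsf Q$ is a geometric quotient of $U$ by a free action, so that dimensions and components correspond, and that the stability description above is correct. In particular, cyclicity must be generated by the framing images in both degrees, with the correct parity conventions: $x_i$ maps even to odd because the coordinate functions carry character $-1$. The hypotheses $m,n\ge1$ ensure that both graded pieces are nonzero, so Theorem~B applies with $d=mn>0$. Freeness follows from stability in the standard way, since a stabiliser fixes the generating framing vectors and hence fixes the whole module. The genericity check for $n\le s_-+rm$ is a routine dimension count: the generic images of the $x_i$ together with $\operatorname{im}u_-$ span a subspace of dimension $\min(n,s_-+rm)$.
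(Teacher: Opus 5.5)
Your proposal is correct and follows the paper's proof essentially step for step. The steps are: the open generating locus in $\mathcal C_{m,n}\times F$ with its free $G_{m,n}$-action, the bound from $\dim\mathcal C_{m,n}=rmn$, and, for $r\ge3$, the use of the component classification to reduce to deciding when $(\mathfrak p_{m,n}^{+})^r\times F$ or $(\mathfrak p_{m,n}^{-})^r\times F$ meets that locus, which gives exactly conditions (i) and (ii).

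The one step you leave open is that $\mathcal U\to\mathsf Q^{m,n;s_+,s_-}_{r,0}$ is a genuine quotient. The paper settles it by identifying this map with the product of the frame bundles of the two tautological character bundles. That makes it a Zariski locally trivial $G_{m,n}$-torsor with connected fibres, so dimensions and components transfer directly.
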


The passage from commuting tuples to Quot schemes uses the
standard presentation by matrices and generating vectors;
compare \cite[Proposition~3.7 and \S3.4]{JelisiejewSivic}.
For $r=2$, the equivariant surface case belongs to the theory of
quiver varieties, where Nakajima's Lagrangian theorem gives
equidimensionality whenever the relevant punctual fibre is
nonempty \cite[Theorem~5.8]{NakajimaALE}; see
\cref{rem:equivariant-quot-surface}.

When $s_+\ge m$ and $s_-\ge n$, both conditions hold, and the two
maximal-dimensional components have a modular description: they are the
loci on which $\mathfrak mH^0(\mathcal F)$ lies in the even, respectively
the odd, part of $H^0(\mathcal F)$.

The second application concerns punctual Gorenstein subschemes. A finite
local algebra $A$ with maximal ideal $\mathfrak m_A$ is Gorenstein when
its socle $\operatorname{Ann}_A(\mathfrak m_A)$ is one-dimensional,
equivalently when it admits a functional $\lambda$ for which
$(u,w)\mapsto\lambda(uw)$ is nondegenerate. The multiplication operators
are self-adjoint for this pairing, so cyclic Gorenstein modules are
governed by the symmetric pair
$(\mathfrak{sl}_N,\mathfrak{so}_N)$ of type $\mathrm{AI}$,
where $N=\dim_\C A$. For invariant Gorenstein quotients,
the socle is either even or odd,
and its parity changes the relevant symmetric pair. For an odd socle, the
pairing twisted by the involution is symplectic and the two
eigenspaces have equal dimension, say $n$. The coordinate
operators are odd with respect to the grading and belong to the
symplectic Lie algebra, giving the pair
$(\mathfrak{sp}_{2n},\mathfrak{gl}_n)$ of type $\mathrm{CI}$.
For an even socle, write $m$ and $n$ for the even and odd dimensions.
Twisting the operators gives the pair
$(\mathfrak{so}_{m+n},\mathfrak{so}_m\oplus\mathfrak{so}_n)$
of type $\mathrm{BDI}$. Let $\mathcal H_{r,N}$ denote the punctual Gorenstein
locus of length $N$ in $\mathbb A^r$, and let
$\mathsf G^{\mathrm{odd}}_{r,2n}$ and $\mathsf G^{\mathrm{even}}_{r,(m,n)}$
denote the invariant loci with odd and even socle and character
multiplicities $(n,n)$ and $(m,n)$, respectively. The following statement
collects \cref{thm:equivariant-gorenstein,cor:odd-socle-strict,%
prop:gorenstein-sharp}.

\begin{theoremE}
Let $r\ge2$. For $N\ge2$,
\[
 \dim\mathcal H_{r,N}\le
 \Bigl\lfloor\frac r2\Bigl(\frac{N(N+1)}2-1\Bigr)\Bigr\rfloor
 -\frac{N(N-1)}2 .
\]
For the invariant loci,
\[
 \dim\mathsf G^{\mathrm{odd}}_{r,2n}\le\frac{rn(n+1)}2-n^2
 \quad(n\ge1),
 \qquad
 \dim\mathsf G^{\mathrm{even}}_{r,(m,n)}\le
 \Bigl\lfloor\frac{rmn}2\Bigr\rfloor-\binom m2-\binom n2
 \quad(m\ge2,\ n\ge1),
\]
and the odd bound improves by one when $r\ge3$ and $n\ge2$.
The even bound is attained for $m=2$ and $1\le n\le r$, but not for
$(m,n)=(3,2)$ and $r\ge3$, although the relevant symmetric pair is
Hermitian in both cases.
\end{theoremE}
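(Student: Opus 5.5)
The plan is to reduce the Gorenstein loci to nilpotent commuting varieties of the three symmetric pairs named before the statement, apply Theorem~A to bound those, and then subtract the dimension of the fibres of the presentation map.

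\textbf{Setting up the presentation.} A point of $\mathcal H_{r,N}$ is a local Gorenstein algebra $A$ of length $N$ with $r$ generators. Choose a basis of $A$ and a socle functional $\lambda$, so that the pairing $(u,w)\mapsto\lambda(uw)$ is a nondegenerate symmetric form. The coordinate operators $x_1,\dots,x_r$ are then commuting, nilpotent and self-adjoint. After fixing the symmetric form on $\C^N$, they become an $r$-tuple in $C_r(\N(\p))$ for the pair $(\mathfrak{gl}_N,\mathfrak o_N)$, where $\p$ is the space of symmetric matrices. In the invariant cases the operators are odd, and they land in $\p$ for $(\mathfrak{sp}_{2n},\mathfrak{gl}_n)$ or, after twisting, for $(\mathfrak{so}_{m+n},\mathfrak{so}_m\oplus\mathfrak{so}_n)$. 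I take the commuting-tuples-plus-cyclic-vector presentation (as in the Quot application) as given.

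\textbf{Dimension count.} The data consist of a tuple together with a cyclic vector $v=1$. The stabiliser freedom is the orthogonal group $O_N$, together with the choice of $\lambda$ up to the relevant structure. So I would bound the dimension of the locus of triples (tuple, cyclic vector, form) by the dimension of $C_r(\N(\p))$ plus the cyclic-vector contribution. I then subtract $\dim O_N=N(N-1)/2$ and the $N-1$ dimensions coming from changes of $\lambda$ modulo scalars, which are absorbed by rescaling.

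\textbf{Bookkeeping for the three bounds.} For the trace-free issue I note that nilpotent operators lie in $\mathfrak{sl}_N$, so $\dim\p=N(N+1)/2-1$. Theorem~A gives $\lfloor r\dim\p/2\rfloor$, which produces the first bound once the extra terms cancel. The key point is that the cyclic vector is determined up to the action of the invertible elements of $A$, whose dimension is $N$, matching the choices of $\lambda$; this cancellation is the delicate part of the bookkeeping. For the odd case, $\dim\p=n(n+1)$ and the group is $GL_n$ of dimension $n^2$, giving $rn(n+1)/2-n^2$. For the even case, $\dim\p=mn$ and the group is $O_m\times O_n$, giving $\lfloor rmn/2\rfloor-\binom m2-\binom n2$.

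\textbf{Improvement by one in the odd case.} For $r\ge3$, $n\ge2$, the pair CI is Hermitian. Theorem~B says the maximal components of $C_r(\N(\p))$ are only $(\pp)^r$ and $(\pmns)^r$. Tuples there have all products $x_ix_j=0$, since $[\pp,\pp]=0$ and $\pp\cdot\pp$ maps the grading into degree $2$, which vanishes. Then $A$ has $\mathfrak m^2=0$, and such an $A$ cannot be Gorenstein of length $2n\ge4$, because its socle is all of $\mathfrak m$. So the Gorenstein locus avoids the maximal components and the dimension drops by at least one.

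\textbf{Sharpness and failure.} For $m=2$, $n\le r$, I would write down explicit algebras, namely $A=\C[x_1,\dots,x_n]/(x_ix_j-\delta_{ij}x_1^2,\ \ldots)$ of socle degree $2$ with $\mathfrak m^2$ even and one-dimensional. Then I compute the dimension of this family directly and check that it equals the bound. For $(m,n)=(3,2)$, Theorem~B again restricts the maximal components to $(\pp)^r,(\pmns)^r$, and the same $\mathfrak m^2$ argument excludes them.

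\textbf{Main obstacle.} I expect the hardest part to be the precise fibre-dimension bookkeeping: showing the presentation map has fibres of exactly the claimed dimension, and not more, on every component.
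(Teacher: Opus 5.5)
The steps for the three bounds follow the paper: a cyclic locus inside $C_r(\N(\p))\times(\text{cyclic vectors})$, a frame torsor over the Gorenstein locus, and Theorem~A. Your CI improvement also matches, since $\Hom(V_-,V_+)\circ\Hom(V_-,V_+)=0$ there. The $m=2$ apolar family is likewise the paper's route.

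\textbf{The $(m,n)=(3,2)$ step.} The ``same $\mathfrak m^2$ argument'' does not transfer to this case. For $(\mathfrak{so}(V),\mathfrak{so}(V_+)\oplus\mathfrak{so}(V_-))$ with $\dim V_-=2$, the grading element lies in $\mathfrak{so}(V_-)$ and splits $V_-=L_+\oplus L_-$ into isotropic lines. So $V$ has three $\xi$-weights $-1,0,1$, on $L_-,V_+,L_+$ respectively. An element of $\pp$ acts as $L_-\to V_+\to L_+\to0$. Hence products $Y_iY_j$ of elements of $\pp$, and with them $X_iX_j=-Y_iY_j$, are nonzero in general.

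Your argument, if it were valid, would apply equally to $m=2$, $3\le n\le r$, $r\ge3$, which is again a simple Hermitian pair. It would push the cyclic locus off both maximal components and give $\dim\mathsf G^{\mathrm{even}}_{r,(2,n)}\le rn-\binom n2-2$. That contradicts the attainment you also claim.

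The step can be repaired by using that the cyclic vector is even. For $(3,2)$, $V_+$ is the weight-zero space. So for $Y_\bullet\in(\pp)^r$ and $v\in V_+$ one has $\C[Y_\bullet]v\subseteq\C v\oplus L_+$, of dimension at most $2<5$, and symmetrically for $\pmns$. Theorem~B then gives $\dim\mathsf G^{\mathrm{even}}_{r,(3,2)}\le 3r-5<3r-4$. For $m=2$ the centre comes instead from $\mathfrak{so}(V_+)$, so even vectors have components of weights $\pm1$ and can be cyclic. The paper does not use Theorem~B here at all: it shows $e(A)=1$, so $A\cong\C[t]/(t^5)$ with $t$ odd, and computes the exact value $2r-2$.

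\textbf{The fibre count.} As written, you add $N$ for the cyclic vector and subtract $\binom N2+(N-1)$. That yields a bound one weaker than claimed, and your next paragraph asserts the opposite cancellation. With $b$ fixed and frame group $\mathrm O_N$, rescaling $\lambda$ by $c$ is not absorbed: it replaces $v$ by $\sqrt c\,v$.

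The correct statement is that the fibre of the cyclic locus $\mathcal U\to\mathcal H_{r,N}$ is an open subset of $A^*$, of dimension $N$, times an $\mathrm O_N$-torsor. Hence $\dim\mathcal U=\dim\mathcal H_{r,N}+N+\binom N2\le\lfloor\frac r2\dim\mathfrak p_{\mathrm{AI}}\rfloor+N$, and the two $N$'s cancel.

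In the invariant cases you must likewise let $v$ range only over $V_+$, of dimension $n$ resp.\ $m$. This matches the odd Frobenius functionals in $(A_-)^*$, resp.\ the even ones in $(A_+)^*$. Allowing $v\in V$ would weaken both bounds by $n$.
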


The Frobenius pairing is classical, and a presentation of the oriented
Gorenstein locus by commuting self-adjoint matrices is given in
\cite[Proposition~1.7]{JelisiejewIarrobino}. Our counts retain the
Frobenius functional itself and distinguish its two possible characters.
The bounds need not be attained, because the cyclic locus can avoid every
maximal-dimensional component of the ambient commuting variety; the last
assertion of Theorem~E shows that this happens even when the ambient
pair is Hermitian.

The results are formulated over $\C$, and their characteristic-zero
versions extend to every algebraically closed field of
characteristic zero (\cref{prop:charzero}).
For each fixed simple Hermitian root datum, the nilpotent
dimension formula also holds outside finitely many positive
characteristics, with one excluded set valid for all $r\ge2$
(\cref{prop:large-p}).

The paper is organised as follows.
\Cref{sec:prelim} collects preliminaries on symmetric pairs and their
higher commuting schemes, and \cref{sec:span} develops the span
stratification and proves \cref{thm:asymptotic}.
\Cref{sec:universal} proves the universal bound and the isotropy statement.
\Cref{sec:hermitian,sec:full} establish the Hermitian results for the
nilpotent and the full commuting varieties, respectively. Examples and comparisons with known results are given in
\cref{sec:examples}, and the moduli applications in
\cref{sec:applications}. The field of validity is recorded at the
end of \cref{sec:full}, in \cref{subsec:fields}.

\section{Preliminaries on symmetric pairs}
\label{sec:prelim}

This section fixes the Lie-theoretic and scheme-theoretic conventions used
throughout the paper. We recall the decomposition and classification of
complex symmetric pairs, isolate the Hermitian pairs and their
three-gradings, and define the higher commuting schemes.

\subsection{Symmetric pairs and their decomposition}

Let \(\g\) be a complex semisimple Lie algebra and let
\(\theta\in\operatorname{Aut}(\g)\) be an involution. We write
\[
  \g=\kalg\oplus\p,\qquad
  \kalg=\g^\theta,\qquad
\p=\{x\in\g:\theta(x)=-x\}.
\]
Thus
\[
  [\kalg,\kalg]\subseteq\kalg,\qquad
  [\kalg,\p]\subseteq\p,\qquad
  [\p,\p]\subseteq\kalg.
\]
Let \(G\) be the adjoint group of \(\g\), and let \(K\subseteq G\) be the
connected subgroup with Lie algebra \(\kalg\). The action of \(K\) on
\(\p\) is the isotropy representation of the symmetric pair
\((\g,\kalg)\). We call the pair \emph{nonzero} if \(\p\neq0\), and of
\emph{simple type} if \(\g\) is simple.

Let
\[
  B(x,y)=\operatorname{tr}(\operatorname{ad}x\,\operatorname{ad}y)
\]
be the Killing form of \(\g\). From the \(\theta\)-invariance and
nondegeneracy of \(B\), the decomposition \(\g=\kalg\oplus\p\) is
\(B\)-orthogonal, and the restrictions of \(B\) to \(\kalg\) and to \(\p\)
are nondegenerate.

For a subset $S\subseteq\g$ and a vector subspace $U\subseteq\g$,
we write
\[
 \mathfrak z_U(S)=\{u\in U:[u,s]=0\text{ for every }s\in S\}.
\]
When $U=\mathfrak h$ is a Lie subalgebra, this is its centralizer
of $S$; its center is $\mathfrak z(\mathfrak h)
=\mathfrak z_{\mathfrak h}(\mathfrak h)$.
For an algebraic subgroup $H\subseteq G$, let $Z_H(S)$ and
$N_H(S)$ denote the pointwise centralizer and setwise normalizer.
We also write
\[
 \mathfrak n_\g(\mathfrak l)
 =\{x\in\g:[x,\mathfrak l]\subseteq\mathfrak l\}
\]
for the Lie algebra normalizer of a subalgebra $\mathfrak l$.

The standard decomposition below reduces questions for semisimple symmetric pairs to two classes of indecomposable factors; see
\cite[\S1.1]{Bulois} and \cite[Chapter~V]{Helgason}.

\begin{proposition}\label{prop:pair-decomposition}
Every complex semisimple symmetric pair is a direct sum of indecomposable
symmetric pairs of the following two kinds:
\begin{enumerate}[label=\textup{(\roman*)}]
\item a pair of simple type, obtained by restricting \(\theta\) to a
  \(\theta\)-stable simple ideal of \(\g\);
\item a pair of group type
  \[
    (\mathfrak h \oplus\mathfrak h,\Delta\mathfrak h),
    \qquad \Delta\mathfrak h:=\{(x,x):x\in\mathfrak h\},
  \]
  where \(\mathfrak h\) is simple and the involution interchanges the two
  factors.
\end{enumerate}
\end{proposition}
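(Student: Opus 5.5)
The plan is to decompose $\g$ into $\theta$-stable simple ideals and sort them according to how $\theta$ acts on them. Since $\g$ is semisimple, it is the direct sum $\g=\bigoplus_i\g_i$ of its simple ideals, and this decomposition is unique. An automorphism of $\g$ maps simple ideals to simple ideals, so $\theta$ permutes the finite set $\{\g_i\}$. Because $\theta^2=1$, every orbit of this permutation has size one or two. The pair then splits according to the orbits: for each orbit $O$, the sum $\g_O=\bigoplus_{i\in O}\g_i$ is a $\theta$-stable ideal, and $\g=\bigoplus_O\g_O$ as Lie algebras with involution. Taking $\theta$-eigenspaces commutes with this direct sum, so $\kalg=\bigoplus_O\kalg_O$ and $\p=\bigoplus_O\p_O$. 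This realizes $(\g,\kalg)$ as the direct sum of the pairs $(\g_O,\kalg_O)$.

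I would then identify the two kinds of summand. If $O=\{i\}$, then $\g_i$ is a $\theta$-stable simple ideal and $\theta|_{\g_i}$ is an involution of it, which is case (i). If $O=\{i,j\}$ with $i\ne j$, then $\theta$ restricts to isomorphisms $\phi=\theta|_{\g_i}\colon\g_i\to\g_j$ and $\theta|_{\g_j}\colon\g_j\to\g_i$, and $\theta^2=1$ forces $\theta|_{\g_j}=\phi^{-1}$. Put $\mathfrak h=\g_i$ and use the isomorphism $\g_i\oplus\g_j\to\mathfrak h\oplus\mathfrak h$, $(x,y)\mapsto(x,\phi^{-1}y)$. Under this identification $\theta$ becomes the swap $(x,y)\mapsto(y,x)$, since $\theta(x,y)=(\phi^{-1}y,\phi x)\mapsto(\phi^{-1}y,x)$. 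The fixed points are then $\Delta\mathfrak h$ and the $(-1)$-eigenspace is $\{(x,-x)\}$, which is case (ii).

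Finally I would check that the summands are indecomposable. A decomposition of a pair means a decomposition of $\g$ into $\theta$-stable ideals. In case (i) the algebra $\g_i$ is simple, so no proper decomposition exists. In case (ii) the only proper nonzero ideals of $\mathfrak h\oplus\mathfrak h$ are the two factors, and the swap interchanges them, so neither is $\theta$-stable. Every step here is standard; the only point needing care is the change of coordinates in the two-element orbit, which is the step I expect to be the main obstacle, mild as it is. I would also note that a summand with $\theta=\mathrm{id}$ is allowed in case (i) and gives $\p_O=0$.
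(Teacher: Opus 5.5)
Your proof is correct. It is the standard argument: decompose $\g$ into simple ideals permuted by $\theta$ in orbits of size one or two, and untwist a two-element orbit via $(x,y)\mapsto(x,\phi^{-1}y)$ so that $\theta$ becomes the swap. The paper gives no proof of its own here and cites Bulois, \S1.1, and Helgason, Chapter~V, which rest on exactly this reasoning. Your remark that summands with $\theta|_{\g_i}=\mathrm{id}$ fall under case (i) with $\p_O=0$ matches the paper's later convention of discarding factors with zero $(-1)$-eigenspace.
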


An element \(x\in\g\) is called \emph{nilpotent} if \(\operatorname{ad}x\) is
nilpotent. Let
\[
  \N(\g)\subseteq\g,\qquad
  \N(\p):=\N(\g)\cap\p
\]
be the nilpotent cones. A \emph{Cartan subspace} is a maximal abelian
subspace \(\mathfrak a\subseteq\p\) consisting of semisimple elements. Any
two Cartan subspaces in \(\p\) are conjugate under \(K\), and
\[
  \rk(\g,\kalg):=\dim\mathfrak a
\]
is the rank of the symmetric pair. By the Chevalley restriction theorem for
symmetric pairs and the results of Kostant--Rallis \cite{KostantRallis},
\(\N(\p)\) is the null cone of the \(K\)-action on \(\p\), is
equidimensional, and
\begin{equation}\label{eq:KR}
  \dim\N(\p)=\dim\p-\rk(\g,\kalg).
\end{equation}

\subsection{Hermitian symmetric pairs and their classification}
\label{subsec:hermitian-classification}

After discarding factors with zero $(-1)$-eigenspace, every
semisimple Hermitian pair is a direct sum of nonzero Hermitian
pairs of simple type. A choice of three-grading supplies a
polarisation of the isotropy representation into two abelian
nilpotent subspaces.

\begin{definition}\label{def:hermitian}
A symmetric pair \((\g,\kalg)\) is of \emph{Hermitian type} if \(\g\) admits
a three-grading
\[
  \g=\g_{-1}\oplus\g_0\oplus\g_1,
  \qquad
  [\g_i,\g_j]\subseteq\g_{i+j},
  \qquad
  \g_i=0\ \text{for }|i|>1,
\]
such that
\[
  \g_0=\kalg,\qquad
  \p=\g_{-1}\oplus\g_1.
\]
We write
\[
  \pmns:=\g_{-1},\qquad \pp:=\g_1.
\]
\end{definition}

The grading is induced by a semisimple grading element
\(\xi\in\mathfrak{z}(\kalg)\) satisfying
\[
  [\xi,x]=j x\qquad(x\in\g_j).
\]
For a nonzero pair with \(\g\) simple, \((\g,\kalg)\) is Hermitian if and only
if \(\mathfrak{z}(\kalg)\neq0\). In that case \(\dim\mathfrak{z}(\kalg)=1\),
and its generator may be normalised so that \(\operatorname{ad}\xi\) has
eigenvalues \(-1,0,1\). These are the complexifications of the irreducible
Hermitian symmetric spaces of noncompact type; see
\cite[Chapter~VIII, \S\S5--6]{Helgason}.

The nonzero Hermitian symmetric pairs of simple type are listed
below; see also \cite[Chapter~X, \S6.3]{Helgason}.
In each case, $\mathfrak z(\kalg)=\C\xi$.
Replacing $\xi$ by $-\xi$ exchanges $\pp$ and $\pmns$.

\begin{center}
\small
\begin{tabular}{@{}llll@{}}
\toprule
Type & \((\g,\kalg)\) & \(\pp\) as a \([\kalg,\kalg]\)-module & \(\dim\pp\)\\
\midrule
\(\mathrm{AIII}\)
& \(\bigl(\mathfrak{sl}_{m+n},
    \mathfrak{s}(\mathfrak{gl}_m\oplus\mathfrak{gl}_n)\bigr)\)
& \(\Hom(\C^n,\C^m)\)
& \(mn\)\\[1mm]
\(\mathrm{BDI}_{\mathrm{Herm}}\)
& \(\bigl(\mathfrak{so}_{n+2},
    \mathfrak{so}_n\oplus\mathfrak{so}_2\bigr)\)
& standard \(\mathfrak{so}_n\)-module
& \(n\)\\[1mm]
\(\mathrm{CI}\)
& \(\bigl(\mathfrak{sp}_{2n},\mathfrak{gl}_n\bigr)\)
& \(\Sym^2\C^n\)
& \(\frac{n(n+1)}2\)\\[1mm]
\(\mathrm{DIII}\)
& \(\bigl(\mathfrak{so}_{2n},\mathfrak{gl}_n\bigr)\)
& \(\Lambda^2\C^n\)
& \(\frac{n(n-1)}2\)\\[1mm]
\(\mathrm{EIII}\)
& \(\bigl(\mathfrak e_6,\mathfrak{so}_{10}\oplus\C\bigr)\)
& half-spin module \(\Delta_{10}^{+}\)
& \(16\)\\[1mm]
\(\mathrm{EVII}\)
& \(\bigl(\mathfrak e_7,\mathfrak e_6\oplus\C\bigr)\)
& minuscule \(27\)-dimensional module
& \(27\)\\
\bottomrule
\end{tabular}
\end{center}

Here $\mathfrak s(\mathfrak{gl}_m\oplus\mathfrak{gl}_n)$
denotes the block-diagonal subalgebra on which the sum of the
two traces is zero. We require $m,n\ge1$ in type $\mathrm{AIII}$,
$n\ge3$ in type $\mathrm{BDI}_{\mathrm{Herm}}$, $n\ge1$ in
type $\mathrm{CI}$, and $n\ge3$ in type $\mathrm{DIII}$.
The notation $\Delta_{10}^+$ denotes one of the two half-spin
representations of $\mathfrak{so}_{10}$.
Low-rank coincidences include
$\mathrm{DIII}(3)\simeq\mathrm{AIII}(1,3)$ and
$\mathrm{BDI}_{\mathrm{Herm}}(3)\simeq\mathrm{CI}(2)$.

The following are the standard properties of the three-grading attached to a
Hermitian symmetric pair; see
\cite[Chapter~VIII, \S\S5--6]{Helgason}.  The first five assertions follow from
the degree rule, the vanishing of $\g_j$ for $|j|>1$, and the
invariance and nondegeneracy of the Killing form. The final
assertion is the standard irreducibility property in the simple
Hermitian classification displayed above.

\begin{lemma}\label{lem:hermitian-basic}
Let \(\g=\pmns\oplus\kalg\oplus\pp\) be a Hermitian symmetric pair. Then:
\begin{enumerate}[label=\textup{(\roman*)}]
\item \(\pp\) and \(\pmns\) are abelian subalgebras of \(\g\);
\item every element of \(\pp\cup\pmns\) is nilpotent;
\item \(B(\pp,\pp)=B(\pmns,\pmns)=0\) and \(B(\kalg,\pp)=B(\kalg,\pmns)=0\);
\item \(B\) induces a perfect pairing \(\pp\times\pmns\longrightarrow\C\);
\item \(\dim\pp=\dim\pmns=\frac12\dim\p\).
\item If $\g$ is simple and $\p\ne0$, then $\pp$ and $\pmns$
are irreducible $K$-modules; the grading element $\xi$ acts on
them with eigenvalues $+1$ and $-1$, respectively.
\end{enumerate}
\end{lemma}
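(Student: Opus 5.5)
We prove the six assertions in order, working throughout with the grading element $\xi\in\mathfrak z(\kalg)$ and the degree rule $[\g_i,\g_j]\subseteq\g_{i+j}$.

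For (i), $[\pp,\pp]\subseteq\g_2=0$ and $[\pmns,\pmns]\subseteq\g_{-2}=0$, so both pieces are abelian subalgebras.

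For (ii), take $x\in\pp$. Then $\operatorname{ad}x$ raises degree by one, so it sends $\g_{-1}$ into $\g_0$, $\g_0$ into $\g_1$, and $\g_1$ into $\g_2=0$. Hence $(\operatorname{ad}x)^3=0$, and the same argument applies to $\pmns$.

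For (iii), use invariance of $B$ together with $\operatorname{ad}\xi$. For $x\in\g_i$ and $y\in\g_j$,
\[
0=B([\xi,x],y)+B(x,[\xi,y])=(i+j)B(x,y).
\]
So $B(\g_i,\g_j)=0$ unless $i+j=0$. This gives $B(\pp,\pp)=B(\pmns,\pmns)=0$ and $B(\kalg,\pp)=B(\kalg,\pmns)=0$.

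For (iv), suppose $x\in\pp$ pairs to zero with all of $\pmns$. By (iii) it also pairs to zero with $\kalg$ and with $\pp$. So $x$ is $B$-orthogonal to all of $\g$, and nondegeneracy forces $x=0$. The same holds with the roles exchanged, so the pairing $\pp\times\pmns\to\C$ is perfect.

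For (v), a perfect pairing gives $\dim\pp=\dim\pmns$. Since $\p=\pmns\oplus\pp$, each has dimension $\tfrac12\dim\p$.

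For (vi), the eigenvalue statement is immediate from the definition of $\xi$. It remains to show irreducibility as $K$-modules.
\begin{itemize}
\item Since $\xi$ is central in $\kalg$, the subspaces $\pp$ and $\pmns$ are $\kalg$-stable and hence $K$-stable.
\item Because $\xi$ acts on each by a scalar, $K$-irreducibility is equivalent to irreducibility under $[\kalg,\kalg]$.
\item In the table, $\pp$ is in each case an irreducible $[\kalg,\kalg]$-module: $\Hom(\C^n,\C^m)$, the standard module for $n\ge3$, $\Sym^2$, $\Lambda^2$ for $n\ge3$, the half-spin module, and the $27$.
\item By (iv), $\pmns\cong(\pp)^*$, which is therefore also irreducible.
\end{itemize}

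Alternatively, there is a classification-free route. If $U\subsetneq\pp$ were a nonzero $\kalg$-stable subspace, then
\[
U\oplus[\pmns,U]\oplus U'
\]
would generate a proper ideal, where $U'$ is the annihilator in $\pmns$ of the $B$-complement of $U$. This would contradict simplicity of $\g$. Here one uses that $\g_{-1}$ and $\g_1$ generate $\g$ together with $\kalg$, and that $[\pmns,\pp]$ spans the part of $\kalg$ acting nontrivially.

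The only step needing real input is (vi). I would cite the displayed classification, which the paper states is the standard source for this property, rather than carry out the ideal-generation argument in detail.
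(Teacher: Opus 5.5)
Your proposal is correct and is essentially the paper's own justification: (i)--(v) follow from the degree rule, the vanishing of $\g_{\pm2}$, and the invariance and nondegeneracy of $B$, while (vi) is read off from the classification table (with $\pmns\cong(\pp)^*$ as $K$-modules via $B$), which is exactly what the paper cites. The classification-free sketch is not needed, and as written it does not identify the step that makes it work. If you do write it out, the key fact is this: for every $\kalg$-stable $V\subseteq\pp$ with $V^{\perp}=\{y\in\pmns:B(y,V)=0\}$ one has $[V,V^{\perp}]=0$, since this bracket lies in $\kalg$ and is $B$-orthogonal to $\kalg$. Applying it to $U$ and to a $\kalg$-stable complement $U''$ shows that $U\oplus[(U'')^{\perp},U]\oplus(U'')^{\perp}$ is a proper nonzero ideal.
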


Thus every Hermitian pair admits a \(K\)-stable polarisation
\[
  \p=\pmns\oplus\pp
\]
into complementary \(B\)-isotropic, abelian, nilpotent subspaces. In
particular,
\[
  (\pp)^r\cup(\pmns)^r\subseteq\Cr(\N(\p))
\]
for every \(r\geq1\).

\subsection{Higher commuting schemes}
\label{subsec:commuting-schemes} We now define the commuting schemes studied in this paper, both with and
without the nilpotency condition, and record their elementary symmetries. Let
\[
    \chi_{\p}\colon\p\longrightarrow
  \p/\!/K:=\Spec\C[\p]^K
\]
be the invariant-theoretic quotient, and let
\[
  \mathcal N_{\mathrm{sch}}(\p)
  :=\chi_{\p}^{-1}\bigl(\chi_{\p}(0)\bigr)
\]
denote its scheme-theoretic null fibre. Its underlying reduced variety is
\(\N(\p)\).

\begin{definition}\label{def:Cr}
For \(r\geq1\), let \(\mathcal C_r(\p)\) be the affine scheme in \(\p^r\)
defined by the equations
\[
  [x_i,x_j]=0\qquad(1\leq i<j\leq r),
\]
and define the nilpotent higher commuting scheme by
\[
  \mathcal C_r^{\mathrm{nil}}(\p)
  :=
  \mathcal C_r(\p)
  \times_{\p^r}
  \mathcal N_{\mathrm{sch}}(\p)^r.
\]
We write
\[
  \Cr(\p):=\mathcal C_r(\p)_{\mathrm{red}},
  \qquad
  \Cr(\N(\p))
  :=\mathcal C_r^{\mathrm{nil}}(\p)_{\mathrm{red}}.
\]
Equivalently,
\[
  \Cr(\N(\p))
  =
  \left\{
    (x_1,\ldots,x_r)\in\N(\p)^r
    \ \middle|\
    [x_i,x_j]=0\ \text{for all }i,j
  \right\}.
\]
\end{definition}

The distinction between the scheme and its reduction is harmless for
dimension and irreducible-component questions. The results below do not
assert reducedness of the unreduced commuting schemes.

\begin{lemma}\label{lem:symmetries}
The variety \(\Cr(\N(\p))\) is a closed cone in \(\p^r\). It is stable under
the diagonal action of \(K\) and under the action of \(\GL_r(\C)\) given by
\[
  g\cdot(x_1,\ldots,x_r)
  =
  \left(
    \sum_jg_{1j}x_j,\ldots,\sum_jg_{rj}x_j
  \right).
\]
\end{lemma}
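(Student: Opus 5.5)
The plan is to verify the three assertions of \cref{lem:symmetries} (closedness, cone property, and stability under $K$ and $\GL_r(\C)$) directly from \cref{def:Cr}, using the description
\[
\Cr(\N(\p))=\Cr(\p)\cap\N(\p)^r=\{(x_1,\ldots,x_r)\in\N(\p)^r:[x_i,x_j]=0\ \text{for all }i,j\}.
\]

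\emph{Closedness and the cone property.} The commuting equations $[x_i,x_j]=0$ are polynomial, in fact bilinear, in the coordinates of $\p^r$. The set $\N(\p)$ is Zariski closed in $\p$, being the zero locus of the homogeneous invariants of positive degree; equivalently, it is the reduced null fibre of $\chi_\p$. Hence $\Cr(\N(\p))$ is closed in $\p^r$. Both the commuting equations and the generators of $\C[\p]^K_+$ are homogeneous. So if $(x_i)$ lies in $\Cr(\N(\p))$ and $t\in\C$, then $(tx_i)$ satisfies the same equations and again lies in $\Cr(\N(\p))$.

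\emph{$K$-stability.} For $k\in K$ we have $\operatorname{Ad}(k)\p=\p$, and $\operatorname{Ad}(k)$ is a Lie algebra automorphism. Therefore
\[
[\operatorname{Ad}(k)x_i,\operatorname{Ad}(k)x_j]=\operatorname{Ad}(k)[x_i,x_j]=0.
\]
Moreover $\operatorname{ad}(\operatorname{Ad}(k)x)$ is conjugate to $\operatorname{ad}x$, so $\operatorname{Ad}(k)$ preserves nilpotency. Thus the diagonal $K$-action preserves $\Cr(\N(\p))$.

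\emph{$\GL_r$-stability.} This is the only step with real content. Put $y_i=\sum_j g_{ij}x_j$. Bilinearity of the bracket gives $[y_i,y_k]=\sum_{j,l}g_{ij}g_{kl}[x_j,x_l]=0$, so the new tuple still commutes. It remains to see that each $y_i$ is nilpotent, which is not automatic for linear combinations. Since the $x_j$ pairwise commute, the operators $\operatorname{ad}x_j$ pairwise commute, because
\[
[\operatorname{ad}x_j,\operatorname{ad}x_l]=\operatorname{ad}[x_j,x_l]=0.
\]
A linear combination of pairwise commuting nilpotent endomorphisms is nilpotent: expanding $\bigl(\sum_j c_j\operatorname{ad}x_j\bigr)^{N}$ by the multinomial theorem, every monomial vanishes once $N$ is large. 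Hence $\operatorname{ad}y_i$ is nilpotent. Also $y_i\in\p$, since $\p$ is a linear subspace. So $y_i\in\N(\p)$, and the tuple $(y_1,\ldots,y_r)$ lies in $\Cr(\N(\p))$.
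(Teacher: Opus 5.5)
Your proof is correct and follows the paper's argument: in both, the key point is that the pairwise commuting nilpotent operators $\operatorname{ad}x_j$ have nilpotent linear combinations. You show this by multinomial expansion, the paper by simultaneous strict triangularisation. The only other difference is cosmetic: you get the cone property directly from homogeneity of the defining equations, whereas the paper deduces it from stability under the scalar matrices in $\GL_r(\C)$.
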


\begin{proof}
The closedness and \(K\)-stability follow directly from
\cref{def:Cr}. For a point
\((x_1,\ldots,x_r)\in\Cr(\N(\p))\), we put
\[
  A=\operatorname{span}\{x_1,\ldots,x_r\}\subseteq\p.
\]
The endomorphisms
\(\operatorname{ad}x_1,\ldots,\operatorname{ad}x_r\) commute and are
nilpotent. They are therefore simultaneously strictly
upper triangular in a suitable basis.  Every linear combination of them is
therefore nilpotent.  It follows that every element of \(A\) is nilpotent, so
every \(\GL_r(\C)\)-transform of the tuple again lies in
\(\Cr(\N(\p))\).  Finally, restricting this action to the subgroup of scalar matrices 
\(\C^\times I_r \subseteq \GL_r(\C)\) shows that \(\Cr(\N(\p))\) is stable under 
simultaneous dilation, and is therefore a cone. 
\end{proof}

\begin{remark}\label{rem:GLr-symmetry}
In the case \(r=2\), this \(\GL_2(\C)\)-symmetry was utilized by Bulois 
\cite[proof of Lemma~1.8]{Bulois}. For general \(r\), one may canonically identify 
\(\p^r\) with the tensor product \(\p\otimes U\), where \(U = \C^r\). Under this 
identification, the group \(\GL_r(\C) \cong \GL(U)\) acts naturally on the second 
tensor factor. For each alternating form \(J \in \Lambda^2 U^*\), the bilinear form
\[
  \omega_J := B|_\p \otimes J
\]
defines an alternating 2-form on \(\p\otimes U\) satisfying the equivariance property 
\(g^*\omega_J = \omega_{g^*J}\) for every \(g \in \GL(U)\). Because \(\GL(U)\) acts 
transitively on the set of alternating forms on \(U\) of any given rank, this 
symmetry clarifies why the subsequent isotropy properties hold simultaneously 
for the entire family \(\{\omega_J : J \in \Lambda^2 U^*\}\).
\end{remark}
Commuting schemes are compatible with direct sums of symmetric pairs, so
most questions reduce to indecomposable pairs.
\begin{lemma}\label{lem:products}
Suppose
\[
  (\g,\kalg)
  =
  (\g^{(1)},\kalg^{(1)})
  \oplus
  (\g^{(2)},\kalg^{(2)}),
  \qquad
  \p=\p^{(1)}\oplus\p^{(2)}.
\]
Then
\[
  \mathcal C_r^{\mathrm{nil}}(\p)
  \simeq
  \mathcal C_r^{\mathrm{nil}}(\p^{(1)})
  \times
  \mathcal C_r^{\mathrm{nil}}(\p^{(2)}).
\]
Consequently,
\[
  \Cr(\N(\p))
  \simeq
  \Cr(\N(\p^{(1)}))
  \times
  \Cr(\N(\p^{(2)})),
\]
and dimensions add. The same product decomposition holds for the full commuting schemes:
\[
 \mathcal C_r(\p)\cong
 \mathcal C_r(\p^{(1)})\times\mathcal C_r(\p^{(2)}),
 \qquad
 C_r(\p)\cong C_r(\p^{(1)})\times C_r(\p^{(2)}).
\]
\end{lemma}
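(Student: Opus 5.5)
The plan is to reduce everything to the fact that brackets, the involution, and the Killing form all split along the direct sum. Write $\g=\g^{(1)}\oplus\g^{(2)}$ as a direct sum of $\theta$-stable ideals. Then $[\g^{(1)},\g^{(2)}]=0$, and $\theta$ preserves each summand. Hence $\kalg=\kalg^{(1)}\oplus\kalg^{(2)}$ and $\p=\p^{(1)}\oplus\p^{(2)}$, and we identify $\p^r=(\p^{(1)})^r\times(\p^{(2)})^r$ by writing $x_i=(x_i^{(1)},x_i^{(2)})$.

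First, the full commuting scheme. Since the ideals commute,
\[
[x_i,x_j]=\bigl([x_i^{(1)},x_j^{(1)}],\,[x_i^{(2)},x_j^{(2)}]\bigr)\in\g^{(1)}\oplus\g^{(2)} .
\]
So the ideal generated by the coordinates of all $[x_i,x_j]$ in $\C[\p^r]=\C[(\p^{(1)})^r]\otimes\C[(\p^{(2)})^r]$ is $I_1\otimes\C[\cdots]+\C[\cdots]\otimes I_2$. Here $I_k$ is the ideal defining $\mathcal C_r(\p^{(k)})$. This gives $\mathcal C_r(\p)\cong\mathcal C_r(\p^{(1)})\times\mathcal C_r(\p^{(2)})$ as schemes.

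Second, the null fibre. The adjoint group is $G=G^{(1)}\times G^{(2)}$ and $K=K^{(1)}\times K^{(2)}$, with each factor acting trivially on the other summand. Because $\C[\p]=\C[\p^{(1)}]\otimes\C[\p^{(2)}]$ and taking invariants of a product of reductive groups commutes with tensor products, we get $\C[\p]^K=\C[\p^{(1)}]^{K^{(1)}}\otimes\C[\p^{(2)}]^{K^{(2)}}$. Concretely, one can take invariants in two steps, using that $K^{(2)}$ acts trivially on the first factor. Therefore $\chi_\p=\chi_{\p^{(1)}}\times\chi_{\p^{(2)}}$ and $\chi_\p(0)=(\chi_{\p^{(1)}}(0),\chi_{\p^{(2)}}(0))$. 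Fibres of a product morphism over a product point are products, so $\mathcal N_{\mathrm{sch}}(\p)\cong\mathcal N_{\mathrm{sch}}(\p^{(1)})\times\mathcal N_{\mathrm{sch}}(\p^{(2)})$. Taking $r$-th powers and forming the fibre product with $\mathcal C_r(\p)$ over $\p^r$, everything is compatible with the product decomposition, since fibre products of product diagrams are products. This yields the scheme isomorphism for $\mathcal C_r^{\mathrm{nil}}(\p)$.

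Third, pass to reduced structures. Over $\C$, a product of reduced schemes of finite type is reduced, so $(X_1\times X_2)_{\mathrm{red}}=(X_1)_{\mathrm{red}}\times(X_2)_{\mathrm{red}}$. This gives both isomorphisms of varieties. Dimensions add for products of varieties over an algebraically closed field.

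The only step requiring care is the invariant-theoretic identification $\C[\p]^K=\bigotimes\C[\p^{(k)}]^{K^{(k)}}$. If we want to avoid it, there is a fallback for the reduced statement. We can use the set-theoretic description in \cref{def:Cr}: an element $(y^{(1)},y^{(2)})$ has nilpotent $\operatorname{ad}$ exactly when both components do, because $\operatorname{ad}$ is block diagonal. Hence $\N(\p)=\N(\p^{(1)})\times\N(\p^{(2)})$, and the reduced isomorphism follows directly.
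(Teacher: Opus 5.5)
Your proposal is correct and follows essentially the same route as the paper. Both split the bracket equations along the commuting ideals, identify $\C[\p]^K$ with the tensor product of the factor invariant rings so that the quotient map and its null fibre are products, and then pass to reduced structures using that products of reduced finite-type schemes over $\C$ are reduced; your set-theoretic fallback through the block-diagonal $\operatorname{ad}$ is a valid extra shortcut for the reduced statements.
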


\begin{proof}
We write $x_i=x_i^{(1)}+x_i^{(2)}$. Since the two summands
are commuting ideals, the bracket equations split into the
independent equations $[x_i^{(1)},x_j^{(1)}]=0$ and
$[x_i^{(2)},x_j^{(2)}]=0$. This proves the assertion for the
full commuting schemes.

Let $K^{(j)}$ be the connected fixed-point group for the $j$th
factor. The product decomposition of the action gives
\[
 \C[\p]^{K^{(1)}\times K^{(2)}}
 =\C[\p^{(1)}]^{K^{(1)}}\otimes_\C
   \C[\p^{(2)}]^{K^{(2)}}.
\]
Hence the invariant quotient morphism is the product of the
two quotient morphisms. Taking the fibre over the image of
zero shows that the null schemes also factor as products.
Intersecting these null-fibre conditions with the bracket
equations proves the nilpotent scheme isomorphism. Over an algebraically closed field of characteristic zero, the Cartesian product of 
any two reduced schemes of finite type is again reduced. Passing to the underlying 
reduced subschemes yields the isomorphisms \(\Cr(\N(\p)) \cong \Cr(\N(\p^{(1)})) \times \Cr(\N(\p^{(2)}))\) 
and \(C_r(\p) \cong C_r(\p^{(1)}) \times C_r(\p^{(2)})\). The additivity of Krull 
dimensions follows immediately.
\end{proof}

\begin{example}\label{ex:groupcase}
Let \(\mathfrak h\) be complex semisimple and let \(\theta\) interchange the
two factors of \(\g=\mathfrak h\oplus\mathfrak h\). Then
\[
  \kalg=\Delta\mathfrak h,\qquad
  \p=\{(x,-x):x\in\mathfrak h\}\simeq\mathfrak h.
\]
Under this identification,
\[
  [(x,-x),(y,-y)]=([x,y],[x,y]),
\]
so two elements of \(\p\) commute precisely when the corresponding elements
of \(\mathfrak h\) commute. Moreover, \((x,-x)\) is nilpotent if and only if
\(x\) is nilpotent. Therefore
\[
  \Cr(\N(\p))\simeq\Cr(\N(\mathfrak h)),
  \qquad
  \dim\p=\dim\mathfrak h.
\]
The restriction of the Killing form of \(\g\) to \(\p\) is
\(2B_{\mathfrak h}\). Consequently, general results established for complex semisimple symmetric pairs 
specialize directly to statements concerning the classical nilpotent higher commuting 
varieties of semisimple Lie algebras. Note that a nonzero group-type pair is never Hermitian, because
\(\kalg\simeq\mathfrak h\) is semisimple and has trivial center.
\end{example}

\section{Span stratification and eventual dimension}
\label{sec:span}

We parametrise commuting nilpotent tuples by their linear spans. This gives
an exact formula for $\dim\Cr(\N(\p))$ in terms of parameter spaces of
abelian nilpotent subspaces, and its eventual linear growth in $r$.

\subsection{Abelian nilpotent subspaces}
\label{subsec:span}
We first introduce the parameter spaces of abelian nilpotent subspaces
and show that they are projective varieties.

For a complex vector space \(V\) and an integer \(d\ge0\), we denote by \(\Gr(d,V)\) 
the Grassmannian of \(d\)-dimensional linear subspaces of \(V\), and we abbreviate 
\(\Gr(d,N) := \Gr(d,\C^N)\). For \(1\le d\le\dim\p\), we define the parameter space 
of \(d\)-dimensional abelian nilpotent subspaces of \(\p\) by
\[
  \Ab_d(\p)
  :=
  \left\{
    A\in\Gr(d,\p)
    \ \middle|\
    [A,A]=0\ \text{ and }\ A\subseteq\N(\p)
  \right\},
\]
with the conventions that \(\Ab_0(\p) := \Spec\C\) and \(\Ab_d(\p) := \varnothing\) 
for all \(d>\dim\p\). We equip each parameter space \(\Ab_d(\p)\) with its reduced 
subscheme structure, and we set \(\dim\varnothing = -\infty\).

The parameter space \(\Ab_d(\p)\) is a projective algebraic variety.
The construction is analogous to that for elementary subalgebras in
\cite[\S1]{CarlsonFriedlanderPevtsova}. Varieties of elementary subalgebras of maximal dimension were studied
further in \cite{PevtsovaStark}.
To verify closedness directly, let 
\(\mathcal S_d \subseteq \p\otimes\mathcal O_{\Gr(d,\p)}\) denote the tautological 
subbundle of rank \(d\), with \(\mathcal S_0 = 0\). The Lie bracket 
\([\cdot,\cdot]\colon\Lambda^2\p\to\kalg\) restricts on each fibre to the commutator, 
inducing a homomorphism of vector bundles
\[
  [\cdot,\cdot]\colon
  \Lambda^2\mathcal S_d
  \longrightarrow
  \kalg\otimes\mathcal O_{\Gr(d,\p)}.
\]
The commutativity condition \([A,A]=0\) is precisely the zero locus of this bundle 
morphism, which is Zariski-closed in \(\Gr(d,\p)\). 

For the nilpotence condition, let \(f_1,\ldots,f_q \in \C[\p]\) be homogeneous 
polynomial generators of the defining ideal of the nilpotent cone \(\N(\p)\), 
with degrees \(n_j := \deg(f_j)\). Restricting these polynomials to the tautological 
subspaces defines global sections
\[
  f_j|_{\mathcal S_d} \in H^0\bigl(\Gr(d,\p),\,\Sym^{n_j}\mathcal S_d^*\bigr)
  \qquad (1\le j\le q).
\]
A subspace \(A\in\Gr(d,\p)\) is contained in \(\N(\p)\) if and only if each section 
\(f_j|_{\mathcal S_d}\) vanishes at \(A\). Let $Z_d$ be the scheme-theoretic intersection of their zero loci
with the zero locus of the bracket map. Then
\[
  \Ab_d(\p)=(Z_d)_{\mathrm{red}}.
\]
In particular, $\Ab_d(\p)$ is closed in $\Gr(d,\p)$ and hence
projective.

We associate with the symmetric pair \((\g,\kalg)\) the numerical invariants
\[
  a(\p) := \max\bigl\{d\in\Z_{\ge0} : \Ab_d(\p)\neq\varnothing\bigr\},
  \qquad
  m(\p) := \dim\Ab_{a(\p)}(\p).
\]
Because any linear subspace of an abelian nilpotent subspace remains abelian and 
nilpotent, every \(A\in\Ab_{a(\p)}(\p)\) contains subspaces of each dimension 
\(d\le a(\p)\). It follows that
\[
  \Ab_d(\p) \neq \varnothing
  \qquad\text{for all } 0 \le d \le a(\p).
\]

\subsection{The span strata and the exact dimension formula} We now stratify $\Cr(\N(\p))$ by the dimension of the span of a tuple;
each stratum is an open subset of a vector bundle over $\Ab_d(\p)$. For \(0\leq d\leq\min\{r,\dim\p\}\), let
\[
  \Cr(\N(\p))^{(d)}
  :=
  \left\{
    \mathbf x=(x_1,\ldots,x_r)\in\Cr(\N(\p))
    \ \middle|\
    \dim\langle x_1,\ldots,x_r\rangle=d
  \right\}.
\]
These locally closed subsets form the stratification of
\(\Cr(\N(\p))\) by the rank of the span map.

\begin{proposition}\label{prop:span-strata}
Let \(r\geq1\) and \(0\leq d\leq\min\{r,\dim\p\}\). There is a natural isomorphism
\begin{equation}\label{eq:span-bundle}
  \Cr(\N(\p))^{(d)}
  \simeq
  \left\{
    (A,x_1,\ldots,x_r)
    \in
    \left.\mathcal S_d^{\oplus r}\right|_{\Ab_d(\p)}
    \ \middle|\
    \langle x_1,\ldots,x_r\rangle=A
  \right\}.
\end{equation}
The right-hand side is the fibrewise spanning open subset of the vector
bundle \(\left.\mathcal S_d^{\oplus r}\right|_{\Ab_d(\p)}\). Consequently,
whenever \(\Ab_d(\p)\neq\varnothing\),
\begin{equation}\label{eq:span-stratum-dim}
  \dim\Cr(\N(\p))^{(d)}
  =
  rd+\dim\Ab_d(\p).
\end{equation}

Moreover,
\[
  \bigcup_{e=0}^{d}\Cr(\N(\p))^{(e)}
\]
is the closed rank-at-most-\(d\) locus. Hence \(\Cr(\N(\p))^{(d)}\) is locally
closed and is open in the rank-at-most-\(d\) locus. In particular, the stratum
of maximal possible rank \(\min\{r,a(\p)\}\) is open in \(\Cr(\N(\p))\).
Finally,
\begin{equation}\label{eq:span-dimension}
  \dim\Cr(\N(\p))
  =
  \max_{0\leq d\leq\min\{r,a(\p)\}}
  \left(
    rd+\dim\Ab_d(\p)
  \right).
\end{equation}
\end{proposition}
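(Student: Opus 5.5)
The plan is to construct the map in \eqref{eq:span-bundle} directly and then count dimensions on both sides. Given $\mathbf x=(x_1,\ldots,x_r)\in\Cr(\N(\p))^{(d)}$, put $A=\langle x_1,\ldots,x_r\rangle$. The proof of \cref{lem:symmetries} shows that $A$ consists of nilpotent elements. Since the $x_i$ pairwise commute, $[A,A]=0$ by bilinearity. As $\dim A=d$, we get $A\in\Ab_d(\p)$, and $\mathbf x\mapsto(A,\mathbf x)$ lands in the spanning locus of $\mathcal S_d^{\oplus r}|_{\Ab_d(\p)}$. Conversely, if $(A,x_1,\ldots,x_r)$ has $x_i\in A$ and $A\in\Ab_d(\p)$, then the $x_i$ commute and are nilpotent, and the tuple spans $A$, so $(x_1,\ldots,x_r)\in\Cr(\N(\p))^{(d)}$. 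The inverse is the projection forgetting $A$.

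To see that these mutually inverse maps are morphisms of varieties, I would work in the standard affine charts of $\Gr(d,\p)$. On the locus where a fixed $d\times d$ minor of the $\dim\p\times r$ matrix $(x_1|\cdots|x_r)$ is nonzero, the span $A$ is given by the corresponding $d$ columns. This expresses the Plücker coordinates of $A$ as regular functions, so $\mathbf x\mapsto A$ is a morphism on the rank-exactly-$d$ locus. Both varieties are reduced, and the projection is a bijective morphism whose inverse is regular, so we obtain an isomorphism. This regularity check is the only point needing care; I expect it to be the main (though mild) obstacle.

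For the spanning condition, inside the rank-$rd$ bundle $\mathcal S_d^{\oplus r}$ the locus where $x_1,\ldots,x_r$ span the $d$-dimensional fibre is the nonvanishing of some $d\times d$ minor in fibre coordinates. It is therefore open, and it is fibrewise nonempty when $d\le r$, and dense since each fibre is irreducible. Hence, when $\Ab_d(\p)\ne\varnothing$, this open set has dimension $\dim\Ab_d(\p)+rd$, computing componentwise over the irreducible components of $\Ab_d(\p)$ using local triviality. This proves \eqref{eq:span-stratum-dim}.

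The rank-at-most-$d$ locus is cut out of $\Cr(\N(\p))$ by the vanishing of all $(d+1)\times(d+1)$ minors of $(x_1|\cdots|x_r)$, so it is closed. The stratum $\Cr(\N(\p))^{(d)}$ is its complement of the rank-at-most-$(d-1)$ locus inside it, hence open there and locally closed. For $d>a(\p)$ the stratum is empty, because the span would be an abelian nilpotent subspace of dimension greater than $a(\p)$. So the top possible rank is $\min\{r,a(\p)\}$, and the rank-at-most-that locus is everything, which makes that stratum open. Finally, $\Cr(\N(\p))$ is a finite union of the strata for $0\le d\le\min\{r,a(\p)\}$, and all of these are nonempty because $\Ab_d(\p)\ne\varnothing$ for $d\le a(\p)$. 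The dimension of a finite union of locally closed pieces is the maximum of their dimensions, which gives \eqref{eq:span-dimension}.
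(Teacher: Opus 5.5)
Your proposal is correct and follows essentially the same route as the paper. Both arguments use the span map, regularity via nonvanishing $d\times d$ minors in Grassmannian charts, the projection as inverse, openness and density of the fibrewise spanning locus over each component of $\Ab_d(\p)$, closedness of the rank-at-most-$d$ locus via $(d+1)\times(d+1)$ minors, and the finite decomposition into strata for $0\le d\le\min\{r,a(\p)\}$.
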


\begin{proof}
Let \(\mathbf x=(x_1,\ldots,x_r)\in\Cr(\N(\p))^{(d)}\), and put
\[
  A_{\mathbf x}:=\langle x_1,\ldots,x_r\rangle.
\]
The operators \(\operatorname{ad}x_1,\ldots,\operatorname{ad}x_r\) are
commuting nilpotent endomorphisms and hence are simultaneously strictly
triangularisable. Every element of
\(A_{\mathbf x}\) is therefore nilpotent. Since the \(x_i\) commute, this
proves \(A_{\mathbf x}\in\Ab_d(\p)\). Thus the map
\[
  \mathbf x
  \longmapsto
  (A_{\mathbf x},x_1,\ldots,x_r)
\]
has the target displayed in \eqref{eq:span-bundle}.

On the rank-$d$ locus in $\Hom(\C^r,\p)$, the map
$f\mapsto\operatorname{im}f$ is a morphism to $\Gr(d,\p)$.
Indeed, on a chart where a fixed $d\times d$ minor is
nonzero, its Grassmannian coordinates are ratios of minors
with that nonvanishing denominator. For $d=0$, the map is
constant. Applying this to the linear map $e_i\mapsto x_i$,
where $(e_i)$ is the standard basis of $\C^r$, shows that
$\mathbf x\mapsto(A_{\mathbf x},\mathbf x)$ is a morphism.
Its inverse is the projection $(A,\mathbf x)\mapsto\mathbf x$,
which proves \eqref{eq:span-bundle}.

The condition that \(r\) vectors span a \(d\)-dimensional fibre is open and,
since \(d\leq r\), nonempty. Over every irreducible component of
\(\Ab_d(\p)\), the right-hand side of \eqref{eq:span-bundle} is therefore a
dense open subset of a vector bundle of rank \(rd\). This proves
\eqref{eq:span-stratum-dim}.

The rank of the linear map
\[
  \C^r\longrightarrow\p,\qquad e_i\longmapsto x_i,
\]
is at most \(d\) precisely when all \((d+1)\times(d+1)\) minors vanish.
Thus the rank-at-most-\(d\) locus is closed. The remaining assertions follow
from the finite disjoint decomposition
\[
  \Cr(\N(\p))
  =
  \bigsqcup_{0\leq d\leq\min\{r,a(\p)\}}
  \Cr(\N(\p))^{(d)}.
\]
\end{proof}

\subsection{Eventual dimension and maximal components}

The exact maximum in \eqref{eq:span-dimension} is eventually attained only at
the largest possible span. The analogous asymptotic mechanism for elementary
subalgebras is discussed in \cite{LevyNgoSivic}. We record the
statement here because the explicit threshold and the correspondence of
maximal-dimensional components are needed later.

\begin{theorem}
\label{thm:asymptotic}
Set $a:=a(\p)$ and $m:=\dim\Ab_a(\p)$.
With the convention \(\dim\Ab_0(\p)=0\), there is an integer
\(r_0\geq\max\{1,a\}\) such that
\begin{equation}\label{eq:eventual-dimension}
  \dim\Cr(\N(\p))=ra+m
  \qquad(r\geq r_0).
\end{equation}
One valid choice is
\begin{equation}\label{eq:explicit-r0}
  r_0
  :=
  \max\left(
    \{1,a\}
    \cup
    \left\{
      1+\left\lfloor
        \frac{\dim\Ab_d(\p)-m}{a-d}
      \right\rfloor
      :0\leq d<a
    \right\}
  \right),
\end{equation}
where the second set is omitted when \(a=0\).

For every \(r\geq r_0\), the maximal-dimensional irreducible components of
\(\Cr(\N(\p))\) are in natural bijection with the irreducible components
\(T\) of \(\Ab_a(\p)\) satisfying \(\dim T=m\). The component associated
with \(T\) is
\[
  \overline{
    \left\{
      (x_1,\ldots,x_r)\in A^r
      \ \middle|\
      A\in T,\ 
      \langle x_1,\ldots,x_r\rangle=A
    \right\}
  }.
\]
\end{theorem}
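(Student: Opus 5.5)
The plan is to read everything off the exact formula \eqref{eq:span-dimension} of \cref{prop:span-strata}. For each $r$ this formula expresses $\dim\Cr(\N(\p))$ as a maximum of the finitely many affine functions $f_d(r):=rd+\dim\Ab_d(\p)$, one for each $0\le d\le\min\{r,a\}$. All $\Ab_d(\p)$ with $d\le a$ are nonempty, so every $f_d$ is finite. Once $r\ge a$, the index set is the whole of $\{0,\dots,a\}$. The line $f_a$ has the strictly largest slope $a$, so it should eventually dominate every other line.

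\textbf{Step 1: the threshold.} Fix $r\ge r_0$ and $d<a$. The inequality $f_a(r)>f_d(r)$ is equivalent to $r(a-d)>\dim\Ab_d(\p)-m$. Since $r$ is an integer, this holds exactly when
\[
 r\ge 1+\Bigl\lfloor\frac{\dim\Ab_d(\p)-m}{a-d}\Bigr\rfloor .
\]
(This is valid whether the numerator is negative or not, because $\lfloor y\rfloor<y+1$.) Hence for $r\ge r_0$ we have $f_a(r)>f_d(r)$ for every $d<a$. We also have $r\ge a$, so $d=a$ is admissible, and \eqref{eq:eventual-dimension} follows. The requirement $r_0\ge\max\{1,a\}$ holds by construction. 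When $a=0$, only the stratum $d=0$ exists; it is the single point $0$, and $m=0$. In that case the formula reads $\dim=0$ and the component statement is trivial.

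\textbf{Step 2: the components.} Let $r\ge r_0$. Every stratum $\Cr(\N(\p))^{(d)}$ with $d<a$ has dimension strictly less than $ra+m$. So no maximal-dimensional irreducible component can have a dense subset lying in such a stratum. Each irreducible component $Z$ is the closure of $Z\cap S$ for the unique stratum $S$ meeting it in a dense subset. It follows that the maximal-dimensional components are exactly the closures of the maximal-dimensional irreducible components of the top stratum $\Cr(\N(\p))^{(a)}$. This stratum is open by \cref{prop:span-strata}.

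By \eqref{eq:span-bundle}, $\Cr(\N(\p))^{(a)}$ is a dense open subset of the vector bundle $\mathcal S_a^{\oplus r}|_{\Ab_a(\p)}$. Its irreducible components are therefore the dense open pieces lying over the irreducible components $T$ of $\Ab_a(\p)$, and the piece over $T$ has dimension $ra+\dim T$. Such a piece is nonempty because $r\ge a$. Thus the components of dimension $ra+m$ correspond exactly to those $T$ with $\dim T=m$.

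The one point needing care is that the pieces over distinct $T$ do not nest after taking closures. This holds because the pieces are irreducible of the stated dimensions and are pairwise distinct open subsets of the stratum. So their closures in the open stratum are distinct components of the stratum, and therefore distinct components of $\Cr(\N(\p))$. The displayed set is precisely the closure of the piece over $T$.

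I expect the only mildly delicate step to be the floor-function bookkeeping in Step 1, especially when $\dim\Ab_d(\p)<m$. Everything else follows formally from \cref{prop:span-strata}.
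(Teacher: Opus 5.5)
Your proposal is correct and follows essentially the same route as the paper. The threshold comes from comparing the affine functions $rd+\dim\Ab_d(\p)$ in \eqref{eq:span-dimension}, and the components are read off from the open top stratum, a fibrewise-spanning open subset of $\mathcal S_a^{\oplus r}|_{\Ab_a(\p)}$ whose complement has strictly smaller dimension. One small correction: the piece over a component $T$ is the preimage of $T$ in the top stratum, so it is closed there rather than open. Non-nesting follows instead because the relevant pieces all have the same dimension $ra+m$ and surject onto distinct components $T$.
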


\begin{proof}
If $a=0$, then $\N(\p)=\{0\}$: any nonzero nilpotent
element would span a member of $\Ab_1(\p)$.
Thus $\Cr(\N(\p))$ and $\Ab_0(\p)$ are points, $m=0$,
and all assertions hold with $r_0=1$.
We may therefore assume $a>0$.

If \(0\leq d<a\), then the definition of \(r_0\) implies
\[
  r > \frac{\dim\Ab_d(\p)-m}{a-d}.
\]
Equivalently,
\[
  ra+m > rd+\dim\Ab_d(\p).
\]
Since \(r\geq a\), all ranks \(0,\ldots,a\) occur in the maximum
\eqref{eq:span-dimension}. The preceding strict inequalities show that the
maximum is uniquely achieved at \(d=a\), proving
\eqref{eq:eventual-dimension}.

Set \(U_a:=\Cr(\N(\p))^{(a)}\). By \cref{prop:span-strata}, \(U_a\) is open in
\(\Cr(\N(\p))\), and it is the fibrewise spanning open subset of
\(\left.\mathcal S_a^{\oplus r}\right|_{\Ab_a(\p)}\). The complement of
\(U_a\) is the closed locus \(\bigcup_{d<a}\Cr(\N(\p))^{(d)}\), which has
dimension
\[
  \max_{0\le d<a}\dim\Cr(\N(\p))^{(d)} < ra+m.
\]
Therefore, every irreducible component of \(\Cr(\N(\p))\) of maximal dimension
\(ra+m\) must intersect \(U_a\), and no maximal component is contained in the
boundary.

Since \(U_a\to\Ab_a(\p)\) is a fibre bundle with irreducible fibres of
constant dimension \(ra\), its irreducible components are indexed by the
irreducible components of the base \(\Ab_a(\p)\). Specifically, if
\(T\subseteq\Ab_a(\p)\) is an irreducible component, the preimage of \(T\) in
\(U_a\) is irreducible of dimension \(ra+\dim T\). It follows that the
components of \(U_a\) of maximal dimension correspond precisely to the
components \(T\subseteq\Ab_a(\p)\) with \(\dim T=m\). Taking closures yields
the asserted bijection and description of components.
\end{proof}

\begin{remark}\label{rem:eventual-meaning}
The theorem identifies two intrinsic geometric invariants of the symmetric pair:
\[
  a(\p)
  =
  \max\{\dim A:A\subseteq\N(\p),\ [A,A]=0\}
\]
is the eventual slope, while \(\dim\Ab_{a(\p)}(\p)\) is the eventual intercept.
Thus the asymptotic problem separates into the linear-algebraic problem of
finding the largest abelian nilpotent subspaces and the geometric problem of
determining their parameter variety.
\end{remark}

\section{Universal bounds and simultaneous isotropy}
\label{sec:universal}

The dimension estimate follows from the two-variable theorem and an
elementary inequality for coordinate projections. Its equality case will
also determine the maximal-dimensional components in the Hermitian case.

\begin{lemma}\label{lem:pair-projection}
Let $E$ be a finite-dimensional vector space over an algebraically closed
field $k$, let $r\ge2$, and let $Z\subseteq E^r$ be a nonempty irreducible
closed subvariety. Write $\pi_i$ and $\pi_{ij}$ for the projections onto
the indicated factors. Then
\[
 (r-1)\dim Z
 \le \sum_{1\le i<j\le r}\dim\overline{\pi_{ij}(Z)}.
\]
If $r\ge3$, $d\ge0$ is an integer, $\dim Z=rd$, and
$\dim\overline{\pi_{ij}(Z)}\le2d$ for all $i<j$, then
\[
 Z=Y_1\times\cdots\times Y_r,
 \qquad Y_i=\overline{\pi_i(Z)},\qquad \dim Y_i=d.
\]
\end{lemma}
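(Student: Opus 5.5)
We prove the inequality via fibre-dimension estimates along a chain of coordinate projections, and the rigidity statement by analysing its equality case. For $1\le k\le r$ let $\rho_k\colon E^r\to E^k$ be the projection onto the first $k$ factors, and put $Z_k=\overline{\rho_k(Z)}$. Write $\delta_k=\dim Z_k-\dim Z_{k-1}$, with $Z_0$ a point, so that $\dim Z=\sum_k\delta_k$.

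The key local estimate is the following. For $i<k$, the generic fibre dimension of $Z_k\to Z_{k-1}$ is at most the generic fibre dimension of $\overline{\pi_{ik}(Z)}\to Y_i$. To see this, note that a fibre of the former over a point $(y_1,\dots,y_{k-1})$ embeds via the $k$th coordinate into the fibre of $\overline{\pi_{ik}(Z)}$ over $y_i$. Both generic fibre dimensions are computed over dense opens, since $Z_{k-1}\to Y_i$ is dominant. Hence
\[
\delta_k\le\dim\overline{\pi_{ik}(Z)}-\dim Y_i\qquad(i<k).
\]
Since $\dim Y_i\le\dim\overline{\pi_{ik}(Z)}$, this gives the cruder bound $\delta_k\le\dim\overline{\pi_{ik}(Z)}$.

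For the inequality, a single chain is not symmetric, so we average over all orderings of the coordinates, applying the chain construction to each permutation $\sigma$ of $\{1,\dots,r\}$. We use $\delta_1=\dim Y_{\sigma(1)}$, and for $k\ge2$ the refined bound above with $i=\sigma(1)$ or any earlier index. A cleaner route is induction on $r$, the case $r=2$ being trivial. Deleting the $j$th coordinate gives a projection $Z\to Z^{(j)}\subseteq E^{r-1}$ with fibre dimension $f_j$. Subadditivity of fibres, applied to the chain above, gives $\sum_j f_j\le\dim Z$: the fibre of $Z\to Z^{(j)}$ sits in the $j$th factor over generic points of $Z_{j-1}$. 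Hence $\sum_j\dim Z^{(j)}\ge(r-1)\dim Z$. Each $Z^{(j)}$ satisfies the inequality for $r-1$ by induction, and each pair $\{i,j\}$ occurs in exactly $r-2$ of the $Z^{(l)}$. This yields
\[
(r-2)\sum_{l}\dim Z^{(l)}\le (r-2)\sum_{i<j}\dim\overline{\pi_{ij}(Z)}.
\]
Combining the two inequalities gives the claim for $r\ge3$. The delicate point is justifying $\sum_j f_j\le\dim Z$; I would prove it via the flag $Z=Z_r\to Z_{r-1}\to\cdots$, where $f_r=\delta_r$. More generally, $f_j\le\delta_j$, because removing coordinate $j$ last versus removing it at step $j$ compares fibres of the same coordinate over a finer base.

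For the equality case with $r\ge3$, assume $\dim Z=rd$ and $\dim\overline{\pi_{ij}(Z)}\le2d$. All inequalities in the induction are then forced to be equalities: each $\dim\overline{\pi_{ij}(Z)}=2d$ and each $f_j=\delta_j=d$ in every ordering. From $\delta_1=\dim Y_i$ in every ordering we get $\dim Y_i=d$ for all $i$. Then the chain gives
\[
\dim Z\le\sum_i\dim Y_i=rd=\dim Z.
\]
So $Z\subseteq Y_1\times\cdots\times Y_r$ is a closed irreducible subvariety of equal dimension in an irreducible variety, and therefore equals it. The main obstacle is arranging the equality bookkeeping so that $\dim Y_i=d$ for every $i$ really follows. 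If the induction is awkward, I would instead use the direct estimate: $\delta_1=\dim Y_1$ and $\delta_k\le 2d-\dim Y_1$ for $k\ge2$, via pairs $(1,k)$. This gives $rd\le\dim Y_1+(r-1)(2d-\dim Y_1)$, hence $(r-2)\dim Y_1\le (r-2)d$, so $\dim Y_1\le d$. Symmetrically $\dim Y_i\le d$ for all $i$, while $rd=\dim Z\le\sum_i\dim Y_i$ forces equality. This direct route is clean and avoids the averaging entirely for the second statement.
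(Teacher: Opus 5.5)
Your proposal is correct. It reaches the lemma by a fibre-dimension route, whereas the paper's proof is a function-field count.

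\textbf{The paper's route.} The paper chooses a transcendence basis of $k(Z)$ among the coordinate functions and lets $b_i$ be the number taken from the $i$th factor. It notes $b_i+b_j\le\dim\overline{\pi_{ij}(Z)}$ and sums over pairs. In the equality case every such basis has $b_i=d$. A basis beginning with a transcendence basis of $k(Y_i)$ has $b_i=\dim Y_i$, which gives $\dim Y_i=d$.

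\textbf{Your route to the inequality.} Your increments $\delta_k$ along the flag $Z_1,\dots,Z_r$ are exactly the $b_k$ of a basis chosen greedily, factor by factor. Your argument goes through the leave-one-out bound $\sum_j\dim Z^{(j)}\ge(r-1)\dim Z$ (via $f_j\le\delta_j$) and then induction on $r$. This is valid but longer than necessary. The fibre of $Z_i\to Z_{i-1}$ embeds in $Y_i$, so $\delta_i\le\dim Y_i$. Adding your key estimate $\delta_k\le\dim\overline{\pi_{ik}(Z)}-\dim Y_i$ gives $\delta_i+\delta_k\le\dim\overline{\pi_{ik}(Z)}$ for all $i<k$. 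Summing over pairs yields the inequality in one step; this is the paper's argument in fibre language.

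\textbf{Rigidity.} Your first bookkeeping asserts $f_j=\delta_j=d$ without justification. Equality actually forces $f_j=\delta_j$ along every ordering, hence $f_i=\dim Y_i$ and $\sum_i\dim Y_i=rd$. That would also suffice, once you note $Z=Y_1\times\cdots\times Y_r$ and then use the pair bounds. Your direct route is complete and clean. Anchoring the chain at $i$ gives $(r-2)\dim Y_i\le(r-2)d$, and $Z\subseteq Y_1\times\cdots\times Y_r$ supplies $rd\le\sum_i\dim Y_i$.

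What this buys over the paper is independence from the first inequality. Each $\dim Y_i$ is bounded using only the pairs containing $i$, and the product containment gives the matching lower bound. The paper instead extracts $\dim Y_i=d$ from the equality case of the summed inequality.
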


\begin{proof}
We choose linear coordinates on each factor of $E^r$.
Their restrictions generate the function field $k(Z)$ over
the ground field $k$. We choose a transcendence basis from
these restrictions and let $b_i$ be the number chosen from
the $i$th factor. Then
\[
 \dim Z=\sum_{i=1}^r b_i.
\]
The coordinates chosen from factors $i$ and $j$ are
algebraically independent on $\overline{\pi_{ij}(Z)}$.
Therefore
\[
 b_i+b_j\le\dim\overline{\pi_{ij}(Z)}.
\]
Summing over all pairs proves the inequality, since each
$b_i$ appears exactly $r-1$ times.

Under the additional hypotheses, we obtain
\[
 (r-1)rd
 =\sum_{i<j}(b_i+b_j)
 \le\sum_{i<j}\dim\overline{\pi_{ij}(Z)}
 \le\binom r2(2d)
 =(r-1)rd.
\]
Hence every pair satisfies $b_i+b_j=2d$.
Since $r\ge3$, these equations imply $b_i=d$ for every $i$.
This conclusion holds for every transcendence basis selected
from the coordinate functions.

Fix $i$, and put $Y_i=\overline{\pi_i(Z)}$.
We first select a transcendence basis of $k(Y_i)$ from the
coordinates on the $i$th factor. Dominance of $Z\to Y_i$
makes these functions algebraically independent in $k(Z)$,
so we can extend them to a transcendence basis of $k(Z)$
using the remaining coordinate functions. No additional
coordinate from the $i$th factor can be selected: all such
coordinates are already algebraic over the chosen basis of
$k(Y_i)$. Thus this choice has $b_i=\dim Y_i$, and the
preceding argument gives $\dim Y_i=d$.

Finally, $Z$ is a closed irreducible subvariety of
$Y_1\times\cdots\times Y_r$. The product is irreducible
over the algebraically closed ground field, and both varieties
have dimension $rd$. Consequently, they are equal.
\end{proof}

We use the following known result.

\begin{proposition}[Bulois {\cite[Proposition~1.5]{Bulois}}]
\label{prop:r2-nilpotent}
For a complex semisimple symmetric pair $\g=\kalg\oplus\p$,
\[
 \dim C_2(\N(\p))=\dim\p.
\]
\end{proposition}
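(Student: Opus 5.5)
The plan is to compute $\dim C_2(\N(\p))$ by fibring over the first coordinate and using the finiteness of nilpotent $K$-orbits. Kostant--Rallis \cite{KostantRallis} show that $\N(\p)$ is a finite union of $K$-orbits $\mathcal O_1,\dots,\mathcal O_s$. The first projection therefore decomposes $C_2(\N(\p))$ into finitely many locally closed pieces
\[
 P_i=\{(x,y): x\in\mathcal O_i,\ y\in\mathfrak z_\p(x)\cap\N(\p)\}.
\]
Each $P_i$ is $K$-equivariantly a fibre bundle over $\mathcal O_i$, with fibre $\mathfrak z_\p(e_i)\cap\N(\p)$ for a representative $e_i\in\mathcal O_i$. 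Hence
\[
 \dim P_i\le\dim\mathcal O_i+\dim\mathfrak z_\p(e_i),
\]
with equality exactly when $\mathfrak z_\p(e_i)\subseteq\N(\p)$.

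\emph{Upper bound.} The key identity is
\[
 \dim\mathfrak z_\p(e)=\dim\p-\dim K\cdot e \qquad (e\in\p).
\]
To prove it, note that $T_e(K\cdot e)=[\kalg,e]\subseteq\p$. For $y\in\p$ invariance of $B$ gives $B([k,e],y)=B(k,[e,y])$, and $[e,y]\in\kalg$. Since $B|_\kalg$ is nondegenerate, $y$ is $B$-orthogonal to $[\kalg,e]$ if and only if $[e,y]=0$. Thus $\mathfrak z_\p(e)=[\kalg,e]^{\perp}\cap\p$, and nondegeneracy of $B|_\p$ gives the identity. Consequently $\dim P_i\le\dim\p$ for every $i$, and so $\dim C_2(\N(\p))\le\dim\p$.

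\emph{Lower bound.} It suffices to find one orbit $\mathcal O_i$ whose centralizer $\mathfrak z_\p(e_i)$ consists entirely of nilpotent elements. The natural choice is the regular nilpotent orbit of $\p$, which is open in $\N(\p)$ and has dimension $\dim\p-\ell$ by \eqref{eq:KR}; by the identity above, $\dim\mathfrak z_\p(e)=\ell$. Choose a normal $\mathfrak{sl}_2$-triple $(e,h,f)$ with $e,f\in\p$ and $h\in\kalg$ (Kostant--Rallis). Then $\mathfrak z_\g(e)$ lies in the nonnegative $\operatorname{ad}h$-eigenspaces, and its degree-$0$ part is the reductive centralizer of the triple. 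Elements of strictly positive $h$-degree are nilpotent, and they form an abelian-free but nilpotent subalgebra, so any linear combination of them is nilpotent. It therefore suffices to show
\[
 \mathfrak z_\p(e,h,f)=0 .
\]
Granting this, $P_i$ for the regular orbit has dimension $(\dim\p-\ell)+\ell=\dim\p$, and equality follows.

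\emph{Main obstacle.} The hard step is proving $\mathfrak z_\p(e,h,f)=0$ for $e$ regular in $\p$. My first attempt argues by contradiction. Suppose $0\ne s\in\mathfrak z_\p(e,h,f)$. This centralizer is the $(-1)$-part of a reductive $\theta$-stable subalgebra, so it contains a nonzero semisimple element $s$. Then $e+ts$ commutes with $s$, and $\mathfrak z_\p(e+ts)\supseteq\C s+\mathfrak z_\p(e)$. The next step is to compare this with the Kostant--Rallis fact that every $x\in\p$ has $\dim\mathfrak z_\p(x)\ge\ell$, with equality exactly on the regular set. Semicontinuity of $\dim\mathfrak z_\p(e+ts)$ near $t=0$ should then force $s\in\mathfrak z_\p(e)$ to lie in the positive-degree part, contradicting $h$-degree $0$.

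If this argument is awkward to complete, I would fall back on the Kostant--Rallis result that the principal nilpotent centralizer $\mathfrak z_\p(e)$ is spanned by elements $e_1,\dots,e_\ell$ of positive $h$-degree. These arise as differentials of the basic invariants, which would give the needed inclusion $\mathfrak z_\p(e)\subseteq\N(\p)$ directly.
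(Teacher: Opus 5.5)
Your orbit-by-orbit count is the standard argument behind Bulois's result. The paper does not reprove the statement; it cites Bulois, and in \cref{rem:hermitian-r2} it recalls that the maximal components are the closures $\overline{K\cdot(\{e\}\times\mathfrak z_\p(e))}$ for $\p$-distinguished $e$, which is exactly your picture. The following parts of your argument are correct: the upper bound via $\mathfrak z_\p(e)=[\kalg,e]^{\perp}\cap\p$, and the reduction of the lower bound to showing that a regular nilpotent $e$ is $\p$-distinguished. Such an $e$ exists because finitely many orbits cover $\N(\p)$, so one has dimension $\dim\p-\ell$. A minor point: there can be several regular nilpotent $K$-orbits, for instance $\C^\times e$ and $\C^\times f$ for $(\mathfrak{sl}_2,\mathfrak{so}_2)$, so say ``a'' rather than ``the''.

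Your first attempt at the key step does not work, however.
\begin{itemize}
\item For $y\in\mathfrak z_\p(e)$ one has $[y,e+ts]=t[y,s]$, so the inclusion $\mathfrak z_\p(e+ts)\supseteq\mathfrak z_\p(e)$ is unjustified. By the Jordan decomposition, $\mathfrak z_\p(e+ts)=\mathfrak z_\p(e)\cap\mathfrak z_\p(s)$ for $t\ne0$.
\item Semicontinuity only tells you that $e+ts$ is regular for small $t$. Together with the previous point this gives $\mathfrak z_\p(e)\subseteq\mathfrak z_\p(s)$, which is no contradiction.
\item Nothing in the argument pushes $s$ out of $h$-degree $0$, which is precisely what you need to rule out.
\end{itemize}

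The proof therefore has to rest on your fallback. That does work, but it needs the degree computation spelled out.
\begin{itemize}
\item Let $f_1,\dots,f_\ell$ be homogeneous basic invariants of degrees $d_i$, with $B|_\p$-gradients $\nabla f_i$. $K$-invariance gives $[e,\nabla f_i(e)]=0$.
\item The Kostant--Rallis criterion says $x\in\p$ is regular if and only if the $(df_i)_x$ are linearly independent. So for regular $e$ the $\nabla f_i(e)$ span the $\ell$-dimensional space $\mathfrak z_\p(e)$.
\item Let $t\mapsto t^{\operatorname{ad}h}$ be the one-parameter subgroup of $K$ attached to the normal triple. Equivariance and homogeneity give
\[
 t^{\operatorname{ad}h}\,\nabla f_i(e)=\nabla f_i\bigl(t^{\operatorname{ad}h}e\bigr)=\nabla f_i(t^2e)=t^{2(d_i-1)}\nabla f_i(e).
\]
\item Hence $\nabla f_i(e)$ has $\operatorname{ad}h$-eigenvalue $2(d_i-1)\ge2$. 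Here $d_i\ge2$ because $\p^K=0$ for semisimple $\g$, so there are no linear invariants.
\end{itemize}
This places $\mathfrak z_\p(e)$ inside the sum of the positive $\operatorname{ad}h$-eigenspaces, which consists of nilpotent elements. You do not even need the detour through $\mathfrak z_\p(e,h,f)$.
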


\begin{theorem}\label{thm:universal}
Let $\g=\kalg\oplus\p$ be a complex semisimple symmetric pair.
For every $r\ge2$,
\[
 \dim\Cr(\N(\p))
 \le\left\lfloor\frac{r\dim\p}{2}\right\rfloor.
\]
\end{theorem}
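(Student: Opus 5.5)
The plan is to apply \cref{lem:pair-projection} to each irreducible component and bound the pairwise projections by \cref{prop:r2-nilpotent}. Let $Z\subseteq\Cr(\N(\p))$ be an irreducible component, viewed as a closed irreducible subvariety of $E^r$ with $E=\p$. Since $\dim\Cr(\N(\p))$ is the maximum of the dimensions of its finitely many components, it suffices to show $\dim Z\le\lfloor r\dim\p/2\rfloor$ for each such $Z$.

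First I will check that the pairwise projections land in the two-variable nilpotent commuting variety. For $1\le i<j\le r$ and $(x_1,\ldots,x_r)\in Z$, the pair $(x_i,x_j)$ consists of commuting nilpotent elements of $\p$, so $\pi_{ij}(Z)\subseteq C_2(\N(\p))$. The latter is closed, so $\overline{\pi_{ij}(Z)}\subseteq C_2(\N(\p))$. By Bulois's result (\cref{prop:r2-nilpotent}), this gives $\dim\overline{\pi_{ij}(Z)}\le\dim\p$.

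Next, the inequality of \cref{lem:pair-projection} yields
\[
 (r-1)\dim Z\le\binom r2\dim\p,
\]
so $\dim Z\le r\dim\p/2$. Because $\dim Z$ is an integer, the floor follows. Taking the maximum over components gives the theorem.

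There is essentially no obstacle here. All the substance is in Bulois's two-variable theorem and the transcendence-basis counting already carried out in \cref{lem:pair-projection}. The only point needing care is that the lemma requires an irreducible closed $Z$, which is why I reduce to irreducible components first rather than applying it to $\Cr(\N(\p))$ directly. The case $r=2$ is consistent, since it simply recovers $\dim C_2(\N(\p))\le\dim\p$.
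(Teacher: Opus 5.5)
Your proposal is correct and is essentially the paper's own proof. Both apply the projection inequality of \cref{lem:pair-projection} to each irreducible component, bound every $\overline{\pi_{ij}(Z)}$ by $\dim C_2(\N(\p))=\dim\p$ via Bulois's theorem, divide $(r-1)\dim Z\le\binom r2\dim\p$ by $r-1$, and use integrality of $\dim Z$ to get the floor.
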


\begin{proof}
For every irreducible component $Z$ of $\Cr(\N(\p))$, each
$\overline{\pi_{ij}(Z)}$ is contained in $C_2(\N(\p))$.
Consequently, \cref{lem:pair-projection,prop:r2-nilpotent} give
\[
 (r-1)\dim Z\le\binom r2\dim\p.
\]
Dividing by $r-1$ and using the integrality of $\dim Z$ proves the assertion.
\end{proof}

We next record the associated isotropy statement. Let $B$ be the
Killing form of $\g$. Its restriction to $\p$ is nondegenerate.
For $1\le i<j\le r$, define the constant alternating form
\[
 \omega_{ij}(\mathbf u,\mathbf v)
 =B(u_i,v_j)-B(u_j,v_i)
 \qquad(\mathbf u,\mathbf v\in\p^r).
\]
These forms span the family $B|_\p\otimes J$, where
$J\in\Lambda^2(\C^r)^*$. We write $Z_{\sm}$ for the smooth locus
of a reduced variety $Z$. Isotropy means that the form restricts
to zero on this locus. For a symplectic form, a subvariety is
Lagrangian if it is isotropic and has half the ambient dimension.

\begin{proposition}
\label{prop:universal-isotropy}
Let $Z$ be an irreducible component of $\Cr(\N(\p))$.
Every form $\omega_{ij}$ vanishes on the smooth locus of $Z$.
Equivalently, $Z$ is isotropic for every form $B|_\p\otimes J$.
\end{proposition}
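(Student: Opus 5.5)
The plan is to reduce the statement to the case $r=2$ via the coordinate projections $\pi_{ij}$. For $r=2$ I will use the classical fact that $C_2(\N(\p))$ lies in a finite union of conormal bundles, which are Lagrangian for $\omega_{12}$. Since $\omega_{ij}=\pi_{ij}^*\omega_{12}$, where $\omega_{12}$ is the form on $\p\oplus\p$, it suffices to prove the following. \emph{Every} irreducible closed subvariety $W\subseteq C_2(\N(\p))$, not only every component, is isotropic for $\omega_{12}$. Afterwards I pull this back along $\pi_{ij}|_Z$.

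\textbf{Step 1: conormal description for $r=2$.} By Kostant--Rallis \cite{KostantRallis}, $\N(\p)$ is a finite union of $K$-orbits $\mathcal O$. For $x\in\mathcal O$ we have $T_x\mathcal O=[\kalg,x]$, and for $y\in\p$ invariance gives $B([k,x],y)=B(k,[x,y])$. Since $B|_\kalg$ is nondegenerate, $y\in\mathfrak z_\p(x)$ exactly when $y\perp T_x\mathcal O$ for $B|_\p$. Hence
\[
 \{(x,y): x\in\mathcal O,\ y\in\mathfrak z_\p(x)\}=T^*_{\mathcal O}\p
\]
is the conormal bundle of $\mathcal O$, after identifying $\p\cong\p^*$ via $B|_\p$. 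Under this identification the canonical symplectic form on $\p\times\p^*$ becomes $\omega_{12}(\mathbf u,\mathbf v)=B(u_1,v_2)-B(u_2,v_1)$. Conormal bundles of smooth locally closed submanifolds are smooth and Lagrangian, hence isotropic. Thus
\[
 C_2(\N(\p))\subseteq\bigcup_{\mathcal O} T^*_{\mathcal O}\p ,
\]
a finite union of smooth isotropic locally closed subvarieties.

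\textbf{Step 2: arbitrary irreducible $W\subseteq C_2(\N(\p))$.} Some $T^*_{\mathcal O}\p$ meets $W$ in a dense open subset $W^\circ$. At a point of $W^\circ\cap W_{\sm}$, the tangent space $T_wW$ is contained in $T_w(T^*_{\mathcal O}\p)$, because $W^\circ$ is a subvariety of the smooth variety $T^*_{\mathcal O}\p$. Therefore $\omega_{12}$ vanishes on $T_wW$ there. The locus in $W_{\sm}$ where $\omega_{12}|_{T_wW}=0$ is closed, since the tangent spaces form a vector bundle on $W_{\sm}$. This locus contains a dense open subset, so it is all of $W_{\sm}$.

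\textbf{Step 3: pullback to $Z$.} Let $Z$ be a component of $\Cr(\N(\p))$ and put $Y=\overline{\pi_{ij}(Z)}\subseteq C_2(\N(\p))$. By Step 2, $Y$ is isotropic. In characteristic zero, generic smoothness gives a dense open $U\subseteq Z_{\sm}$ with $\pi_{ij}(U)\subseteq Y_{\sm}$ and $d\pi_{ij}(T_zZ)\subseteq T_{\pi_{ij}(z)}Y$. Then $\omega_{ij}|_{T_zZ}=(d\pi_{ij})^*\omega_{12}|_{TY}=0$ for $z\in U$, and the closedness argument of Step 2 extends this to all of $Z_{\sm}$. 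Since the $\omega_{ij}$ span $\{B|_\p\otimes J\}$, this gives the full statement.

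\textbf{Main obstacle.} The delicate point is Step 2 for subvarieties that are not components. Such a $W$ may lie entirely in the boundary of a conormal bundle, or its generic point may be singular in $C_2(\N(\p))$. The argument sidesteps this by choosing the orbit $\mathcal O$ adapted to $W$ itself, so that a dense open part of $W$ sits inside a single smooth isotropic stratum, rather than working with a component of $C_2$ containing $W$. Finiteness of $K$-orbits in $\N(\p)$ is exactly what makes such a choice possible.
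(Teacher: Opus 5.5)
Your proof is correct, but it takes a different route from the paper's. The paper never projects to pairs of coordinates. It works directly on $Z_{\sm}$ with the one-form $\alpha_{ij}=B(x_j,dx_i)$. Finiteness of nilpotent $K$-orbits puts $x_i$ generically in a single orbit, so $dx_i$ takes values in $[\kalg,x_i]$ and $\alpha_{ij}=B([x_i,x_j],a)=0$ on a dense open set. The paper extends this vanishing to all of $Z_{\sm}$ and then concludes $\omega_{ij}|_{Z_{\sm}}=-d\alpha_{ij}=0$.

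In effect, the paper proves by hand that the Liouville form vanishes on the conormal strata, which is exactly the standard proof that conormal bundles are Lagrangian. It also obtains the slightly stronger conclusion that the primitives $\alpha_{ij}$ themselves vanish on $Z_{\sm}$.

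Your route isolates the two-variable geometry instead. You identify $\{(x,y):x\in\mathcal O,\ [x,y]=0\}$ with the conormal bundle $T^*_{\mathcal O}\p$ and quote its Lagrangian property. From this you prove isotropy for every irreducible subvariety of $C_2(\N(\p))$, and then pull back along $\pi_{ij}$. This costs the extra bookkeeping with $Y_{\sm}$ in Step 3, which you handle correctly. Only dominance of $Z\to Y$ is needed there, not generic smoothness, because $d\pi_{ij}(T_zZ)\subseteq T_{\pi_{ij}(z)}Y$ holds for any morphism.

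In return, your route shows that the $r$-fold isotropy is inherited from the conormal stratification of $C_2(\N(\p))$. This matches Bulois's description of the maximal-dimensional components for $r=2$ as closures of $K\cdot(\{e\}\times\mathfrak z_\p(e))$, which are conormal bundles of $\p$-distinguished orbits. Neither argument uses that $Z$ is a component, so both apply to every irreducible closed subvariety of $\Cr(\N(\p))$.
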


\begin{proof}
Write $x_i:Z_{\sm}\to\N(\p)$ for the $i$th coordinate map,
and consider the regular one-form
\[
  \alpha_{ij}=B(x_j,dx_i)\in
             H^0(Z_{\sm},\Omega^1_{Z_{\sm}}).
\]
There are finitely many nilpotent $K$-orbits in $\p$
\cite{KostantRallis}; see also \cite[\S1]{Bulois}.
Their inverse images under $x_i$ give a finite partition of
$Z_{\sm}$ into locally closed subsets. Since $Z_{\sm}$ is
irreducible, one such subset contains a dense open subset
$U\subseteq Z_{\sm}$. Thus $x_i(U)$ lies in a single orbit.

For $z\in U$ and $u\in T_zZ$, the differential satisfies
\[
  (dx_i)_z(u)\in T_{x_i(z)}(K\cdot x_i(z))
              =[\kalg,x_i(z)].
\]
We may therefore write $(dx_i)_z(u)=[a,x_i(z)]$ for some
$a\in\kalg$. Invariance of $B$ and commutativity give
\[
  \alpha_{ij,z}(u)
  =B(x_j(z),[a,x_i(z)])
  =B([x_i(z),x_j(z)],a)=0.
\]
Hence $\alpha_{ij}$ vanishes on $U$. The sheaf
$\Omega^1_{Z_{\sm}}$ is locally free on the integral variety
$Z_{\sm}$, so a section vanishing on a dense open subset
vanishes everywhere. Thus $\alpha_{ij}=0$ on $Z_{\sm}$.

The one-form $\alpha_{ij}$ is the pullback of the ambient
one-form $B(x_j,dx_i)$ on $\p^r$, whose exterior derivative
is $-\omega_{ij}$  (with the convention $(\varphi\wedge\psi)(u,v)=\varphi(u)\psi(v)-\varphi(v)\psi(u)$). Since exterior differentiation commutes
with pullback, the restriction of $\omega_{ij}$ to
$Z_{\sm}$ equals $-d\alpha_{ij}=0$.
\end{proof}

Combining the universal bound with the span stratification of
\cref{subsec:span} yields two geometric consequences that we note here.

\begin{corollary}\label{cor:abelian}
Let $\g=\kalg\oplus\p$ be a complex semisimple symmetric pair and put
$D=\dim\p$.
\begin{enumerate}[label=\textup{(\roman*)}]
\item $\Ab_d(\p)=\varnothing$ for every integer $d>D/2$. Equivalently, every
abelian linear subspace contained in $\N(\p)$ has dimension at most $D/2$.
\item If $D$ is even, then $\Ab_{D/2}(\p)$ is a finite set.
\end{enumerate}
\end{corollary}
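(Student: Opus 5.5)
We combine the span stratification of \cref{prop:span-strata} with the universal bound of \cref{thm:universal}, choosing the tuple length $r$ large in terms of $d$.

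For (i), suppose $\Ab_d(\p)\ne\varnothing$ for some integer $d$. Take $r\ge\max\{2,d\}$, so that $d\le\min\{r,\dim\p\}$. By \eqref{eq:span-stratum-dim}, the stratum $\Cr(\N(\p))^{(d)}$ is nonempty of dimension $rd+\dim\Ab_d(\p)\ge rd$. Hence
\[
 rd\le\dim\Cr(\N(\p))\le\Bigl\lfloor\frac{rD}{2}\Bigr\rfloor\le\frac{rD}{2}.
\]
Dividing by $r$ gives $d\le D/2$. Thus $\Ab_d(\p)=\varnothing$ whenever $d>D/2$.

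The equivalent formulation also holds. Any abelian linear subspace $A\subseteq\N(\p)$ is a point of $\Ab_{\dim A}(\p)$. So $A$ has dimension at most $D/2$.

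For (ii), let $D$ be even and put $d=D/2$. Apply the same estimate with $r\ge\max\{2,d\}$ chosen arbitrarily large. If $\Ab_d(\p)\ne\varnothing$, then
\[
 rd+\dim\Ab_d(\p)\le\frac{rD}{2}=rd,
\]
so $\dim\Ab_d(\p)\le0$. A single $r$ already suffices here; no limit is needed.

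$\Ab_d(\p)$ is a closed subvariety of $\Gr(d,\p)$, hence projective and of finite type. A finite-type variety of dimension $\le0$ is a finite set of points, and this includes the empty case. This proves (ii).

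The only step needing care is the bookkeeping in \cref{prop:span-strata}. The formula \eqref{eq:span-stratum-dim} requires $d\le r$, because only then is the spanning open subset of the rank-$rd$ bundle nonempty. This is why we take $r\ge d$.

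The constraint $r\ge2$ is needed to apply \cref{thm:universal}. Since both requirements are just lower bounds on $r$, there is no real obstacle.
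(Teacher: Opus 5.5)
Your proof is correct and essentially the paper's: part (ii) is exactly its argument (the rank-$D/2$ stratum, of dimension $rD/2+\dim\Ab_{D/2}(\p)$, is squeezed under the bound $rD/2$ from \cref{thm:universal}). For (i) the paper argues more directly: for $A\in\Ab_d(\p)$ it notes $A^2\subseteq C_2(\N(\p))$ and applies Bulois's equality $\dim C_2(\N(\p))=D$. Your stratification argument is equally valid and rests on the same input.
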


\begin{proof}
For~(i), let $A\in\Ab_d(\p)$. Then $A^2\subseteq C_2(\N(\p))$, and hence \cref{prop:r2-nilpotent} gives
\[
  2d = \dim A^2 \le \dim C_2(\N(\p)) = D,
\]
so $d\le D/2$.

For~(ii), suppose $D=2d_0$. If $\Ab_{d_0}(\p)$ is empty, there is nothing to
prove. Otherwise, choose an integer $r\ge\max\{2,d_0\}$. By
\cref{prop:span-strata}, the rank-$d_0$ stratum has dimension
\[
  rd_0 + \dim\Ab_{d_0}(\p).
\]
On the other hand, \cref{thm:universal} bounds the dimension of the ambient
variety by
\[
  \dim\Cr(\N(\p)) \le \Bigl\lfloor\frac{rD}{2}\Bigr\rfloor = rd_0.
\]
Since the stratum is contained in $\Cr(\N(\p))$, we obtain
$rd_0 + \dim\Ab_{d_0}(\p) \le rd_0$, which forces $\dim\Ab_{d_0}(\p) = 0$.
Since $\Ab_{d_0}(\p)$ is a projective algebraic variety, it 
consists of only finitely many closed points.
\end{proof}

\begin{corollary}\label{cor:Ab-bound}
For every integer $d\ge2$,
\[
  \dim\Ab_d(\p)
  \le
  \Bigl\lfloor\frac{d\dim\p}{2}\Bigr\rfloor - d^2,
\]
whenever $\Ab_d(\p)$ is nonempty. Moreover,
\[
  \Ab_1(\p) \simeq \mathbb P\N(\p),
\]
and hence, whenever this variety is nonempty,
\[
  \dim\Ab_1(\p)
  = \dim\p - \rk(\g,\kalg) - 1
  \le \dim\p - 2.
\]
\end{corollary}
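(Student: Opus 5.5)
The proof splits into the bound for $d\ge2$ and the identification of $\Ab_1(\p)$.

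\emph{Bound for $d\ge2$.} We combine the span stratification with the universal bound, taking $r=d$. By \cref{prop:span-strata}, the rank-$d$ stratum $C_d(\N(\p))^{(d)}$ is a nonempty open subset of a vector bundle of rank $d^2$ over $\Ab_d(\p)$, because $\Ab_d(\p)\ne\varnothing$ and $d\le r$. Hence
\[
 d^2+\dim\Ab_d(\p)=\dim C_d(\N(\p))^{(d)}\le\dim C_d(\N(\p)).
\]
Since $d\ge2$, \cref{thm:universal} applies with $r=d$ and gives
\[
 \dim C_d(\N(\p))\le\Bigl\lfloor\frac{d\dim\p}{2}\Bigr\rfloor .
\]
Rearranging the two inequalities yields the stated bound. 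One could take larger $r$ instead, but that only gives $rd+\dim\Ab_d\le\lfloor rD/2\rfloor$, which is weaker for $d\le D/2$. So $r=d$ is the natural choice.

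\emph{The isomorphism $\Ab_1(\p)\simeq\mathbb P\N(\p)$.} A line $A\subseteq\p$ is automatically abelian, since $[x,x]=0$. So the only condition defining $\Ab_1(\p)$ inside $\Gr(1,\p)=\mathbb P(\p)$ is $A\subseteq\N(\p)$. Because $\N(\p)$ is a closed cone, this holds exactly when $A\setminus\{0\}\subseteq\N(\p)$. Thus, as sets, $\Ab_1(\p)$ is the projectivisation of the cone $\N(\p)$. Both sides carry reduced structures: $\Ab_1(\p)$ is reduced by convention, and $\mathbb P\N(\p)$ is the reduced closed subvariety cut out by the homogeneous ideal of $\N(\p)$. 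So they coincide as reduced closed subvarieties of $\mathbb P(\p)$.

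\emph{Dimension of $\Ab_1(\p)$.} Nonemptiness of $\mathbb P\N(\p)$ means $\N(\p)\ne\{0\}$. Projectivising a nonzero cone lowers the dimension by one, so \eqref{eq:KR} gives
\[
 \dim\Ab_1(\p)=\dim\p-\rk(\g,\kalg)-1.
\]
The final inequality $\dim\Ab_1(\p)\le\dim\p-2$ is equivalent to $\rk(\g,\kalg)\ge1$. This is the only point that needs care. Suppose $\rk(\g,\kalg)=0$. Then $\N(\p)=\p$ by \eqref{eq:KR}, so the null cone is all of $\p$ and $\C[\p]^K=\C$. By Kostant--Rallis, every element of $\p$ then has Jordan decomposition with semisimple part in $\p$ and in some Cartan subspace, which is zero here; hence every element of $\p$ is nilpotent. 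Its semisimple parts then vanish, so $\p$ contains no nonzero semisimple elements.

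I would derive the contradiction as follows. The nonemptiness hypothesis forces $\p\neq0$, and if $\p\ne0$ then $\p$ contains a nonzero semisimple element. Take a nonzero nilpotent $e\in\p$ and apply Kostant--Rallis to get a normal $\mathfrak{sl}_2$-triple $(e,h,f)$ with $h\in\kalg$ and $f\in\p$. Then $e+f\in\p$ is semisimple and nonzero, which contradicts the previous paragraph. Hence $\rk(\g,\kalg)\ge1$ whenever $\Ab_1(\p)\ne\varnothing$, and the bound $\dim\Ab_1(\p)\le\dim\p-2$ follows.
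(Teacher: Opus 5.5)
Your proposal is correct and follows the paper's proof: for $d\ge2$ you take $r=d$ in the span stratification and compare with \cref{thm:universal}, and for $d=1$ you identify $\Ab_1(\p)$ with $\mathbb P\N(\p)$, since lines are automatically abelian, and then apply \eqref{eq:KR}. The one addition is your normal $\mathfrak{sl}_2$-triple argument for $\rk(\g,\kalg)\ge1$, which produces the nonzero semisimple element $e+f\in\p$; the paper simply asserts this from $\p\ne0$, and your argument is valid, though the contradiction framing around it is more roundabout than necessary.
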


\begin{proof}
For $d\ge2$, we apply \cref{prop:span-strata} with $r=d$. The rank-$d$
stratum in $C_d(\N(\p))$ has dimension
\[
  d^2 + \dim\Ab_d(\p).
\]
Since $d\ge2$, \cref{thm:universal} applies to $C_d(\N(\p))$ and gives
\[
  d^2 + \dim\Ab_d(\p) \le \dim C_d(\N(\p)) \le \Bigl\lfloor\frac{d\dim\p}{2}\Bigr\rfloor.
\]
Subtracting $d^2$ yields the asserted inequality.

The assertion for $d=1$ follows directly from the definition: every
$1$-dimensional subspace is abelian, so $\Ab_1(\p)$ is the variety of lines in
$\p$ contained in the null cone $\N(\p)$, which is canonically isomorphic to
$\mathbb P\N(\p)$. The dimension formula then follows from
\eqref{eq:KR}. Nonemptiness of $\Ab_1(\p)$ implies $\p\ne0$, so $\rk(\g,\kalg)\ge1$,
giving the upper bound $\dim\p - 2$.
\end{proof}

\section{Hermitian nilpotent commuting varieties}\label{sec:hermitian}

A Hermitian three-grading supplies two complementary abelian
nilpotent subspaces of half the dimension of $\p$.
They attain the universal bound. The equality case of the
projection lemma then determines all maximal-dimensional
components for $r\ge3$.

\subsection{Dimension and maximal-dimensional components}

\begin{theorem}\label{thm:hermitian}
Let $\g=\pmns\oplus\kalg\oplus\pp$ be a complex semisimple
Hermitian symmetric pair. For every $r\ge2$,
\[
 \dim\Cr(\N(\p))=r\dim\pp=\frac r2\dim\p.
\]
\end{theorem}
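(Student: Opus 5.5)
The plan is to squeeze $\dim\Cr(\N(\p))$ between matching upper and lower bounds. For the upper bound, apply \cref{thm:universal} directly. By \cref{lem:hermitian-basic}(v), $\dim\p=2\dim\pp$ is even, so
\[
 \dim\Cr(\N(\p))\le\Bigl\lfloor\frac{r\dim\p}{2}\Bigr\rfloor=r\dim\pp .
\]
No floor issue arises.

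For the lower bound, exhibit an explicit closed subvariety of the right dimension. By \cref{lem:hermitian-basic}(i),(ii), $\pp$ is an abelian subalgebra consisting of nilpotent elements. Hence every tuple $(x_1,\ldots,x_r)\in(\pp)^r$ lies in $\Cr(\N(\p))$, as already recorded after \cref{lem:hermitian-basic}. The subvariety $(\pp)^r$ is a linear subspace of $\p^r$ of dimension $r\dim\pp$, so
\[
 \dim\Cr(\N(\p))\ge r\dim\pp .
\]
Combining the two inequalities gives the asserted equality $\dim\Cr(\N(\p))=r\dim\pp=\tfrac r2\dim\p$.

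There is essentially no obstacle, since all the work is in \cref{thm:universal} (Bulois's result plus \cref{lem:pair-projection}). The only point needing care is the zero case, and it is harmless: for a semisimple Hermitian pair, factors with $\p=0$ contribute zero to both sides. If $\p=0$ entirely, both sides are $0$. One should also note that the three-grading in \cref{def:hermitian} exists for the semisimple pair in question, by applying it factorwise using \cref{lem:products}. Even this is unnecessary, since the definition is stated directly for $\g$. Alternatively, one could phrase the lower bound via \cref{prop:span-strata} with $A=\pp\in\Ab_{\dim\pp}(\p)$ when $r\ge\dim\pp$, but the direct product argument works uniformly for every $r\ge2$.
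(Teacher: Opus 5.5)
Your proposal is correct and matches the paper's proof: the lower bound comes from the linear subvariety $(\pp)^r\subseteq\Cr(\N(\p))$ of dimension $r\dim\pp$, and the upper bound from the universal bound $\lfloor r\dim\p/2\rfloor$, which equals $r\dim\pp$ exactly because $\dim\p=2\dim\pp$. Your remarks on degenerate factors with $\p=0$ are harmless extras that the paper does not need to spell out.
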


\begin{proof}
The inclusion $(\pp)^r\subseteq\Cr(\N(\p))$ gives the lower
bound $r\dim\pp$. Since $\dim\p=2\dim\pp$, the upper bound in
\cref{thm:universal} has exactly this value.
\end{proof}

For $r=2$ this is the Hermitian case of Bulois's theorem
\cite[Proposition~1.5]{Bulois}, which was used above.
The following theorem separates the existence of the two linear
components for $r\ge2$ from their exhaustiveness for $r\ge3$.

\begin{theorem}\label{thm:hermitian-max-components}
Let $(\g,\kalg)$ be a nonzero complex Hermitian symmetric pair
of simple type, and put $d=\dim\pp$.
\begin{enumerate}[label=\textup{(\roman*)}]
\item For every $r\ge2$, the subvarieties $(\pp)^r$ and
$(\pmns)^r$ are distinct maximal-dimensional irreducible
components of $\Cr(\N(\p))$, each of dimension $rd$.
\item For every $r\ge3$, these are the only
maximal-dimensional irreducible components of $\Cr(\N(\p))$.
\end{enumerate}
\end{theorem}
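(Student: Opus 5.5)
For (i), I will first observe that $(\pp)^r$ and $(\pmns)^r$ are closed irreducible subvarieties of $\Cr(\N(\p))$, by \cref{lem:hermitian-basic}. They are linear subspaces of dimension $rd$. By \cref{thm:hermitian}, $rd$ equals $\dim\Cr(\N(\p))$. An irreducible closed subset whose dimension equals the dimension of the ambient variety is contained in some irreducible component, and that component has dimension at most $rd$. So the subset is itself a component, of maximal dimension. The two components are distinct because $\pp\cap\pmns=0$ and $d\ge1$, the pair being nonzero.

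For (ii), fix $r\ge3$ and let $Z$ be an irreducible component of $\Cr(\N(\p))$ with $\dim Z=rd$. Each projection $\overline{\pi_{ij}(Z)}$ lies in $C_2(\N(\p))$. By \cref{prop:r2-nilpotent} it therefore has dimension at most $\dim\p=2d$. The equality case of \cref{lem:pair-projection} then gives
\[
Z=Y_1\times\cdots\times Y_r,\qquad Y_i=\overline{\pi_i(Z)}\subseteq\N(\p),\qquad \dim Y_i=d .
\]
Because $Z$ is stable under the coordinate permutations in $\GL_r(\C)$, I will pass to the $\GL_r$-orbit of the component. The key consequence of the product structure is that any two elements $y,y'\in Y_1$ commute, since the tuple $(y,y',\dots)$ lies in $Z$ when we use $Y_1\times Y_2$ and the $\GL_r$-symmetry. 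More directly, $Y_i\times Y_j$ is a subset of $C_2(\N(\p))$ for $i\ne j$.

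Next I will show that $Y_1=Y_2=\dots=Y_r$ and that this common set is a linear abelian subspace. Since $Z$ is $\GL_r(\C)$-stable by \cref{lem:symmetries}, and components are permuted by the connected group $\GL_r(\C)$, each component is $\GL_r$-stable. Hence $(x_1,\dots,x_r)\in Z$ implies $(x_1+x_2,x_2,\dots)\in Z$. This gives $Y_1+Y_2\subseteq Y_1$, and with scalars and the swap, $Y_1=Y_2=:Y$ is closed under addition and scaling. So $Y$ is a linear subspace of dimension $d$. It consists of nilpotents and is abelian because $Y\times Y\subseteq C_2$. Thus $Y\in\Ab_d(\p)$ with $d=\tfrac12\dim\p$, and $Z=Y^r$.

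It remains to show that $\Ab_d(\p)=\{\pp,\pmns\}$. This is the main obstacle. By \cref{cor:abelian}(ii), $\Ab_d(\p)$ is finite. The group $K$ is connected and acts on it, so every $A\in\Ab_d(\p)$ is $K$-stable, and in particular stable under $\operatorname{ad}\xi$. Then $A=(A\cap\pp)\oplus(A\cap\pmns)$, and each summand is $K$-stable. By the irreducibility in \cref{lem:hermitian-basic}(vi), each summand is either $0$ or the whole of $\pp$, respectively $\pmns$. Since $\dim A=d$, the only possibilities are $A=\pp$ or $A=\pmns$; the mixed case $\pp\oplus\pmns$ would have dimension $2d$, which is too large. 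This finishes (ii).
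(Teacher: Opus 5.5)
Your proof is correct and follows the paper's argument. The equality case of \cref{lem:pair-projection} gives $Z=Y_1\times\cdots\times Y_r$. Connectedness of $\GL_r(\C)$ then makes $Z$ stable, so $Z=A^r$ for a $d$-dimensional abelian nilpotent subspace $A$. Finally, $\operatorname{ad}\xi$-stability together with irreducibility of $\pp$ and $\pmns$ forces $A\in\{\pp,\pmns\}$.

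The only deviation is how you obtain $K$-stability of $A$. You take it from the finiteness of $\Ab_d(\p)$ in \cref{cor:abelian}(ii), whereas the paper takes it from $K$-stability of the component $Z$. Your route is equally valid and directly yields $\Ab_d(\p)=\{\pp,\pmns\}$.
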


\begin{proof}
For \textup{(i)}, both subvarieties are closed and irreducible,
and each has dimension $rd$. By \cref{thm:hermitian}, this is
the dimension of $\Cr(\N(\p))$. Each is therefore an
irreducible component: an irreducible closed subvariety
properly contained in an irreducible component has strictly
smaller dimension. The two subvarieties are distinct because
$\pp\cap\pmns=0$ and $d>0$. This proves \textup{(i)},
including the case $r=2$.

For \textup{(ii)}, let $r\ge3$, and let $Z$ be an irreducible
component of dimension $rd$. Each two-coordinate projection
has closure contained in $C_2(\N(\p))$, which has dimension
$2d$. The equality statement in \cref{lem:pair-projection}
therefore gives
\[
 Z=Y_1\times\cdots\times Y_r,
 \qquad Y_i=\overline{\pi_i(Z)},\qquad \dim Y_i=d.
\]

Both $K$ and $\GL_r$ are connected and preserve the ambient
variety by \cref{lem:symmetries}. Hence each group preserves
$Z$. Indeed, if a connected algebraic group $H$ acts on a
variety, the closure of the image of $H\times Z$ is an
irreducible closed subset containing the irreducible component
$Z$; it must consequently equal $Z$.

Permutation matrices show that $Y_1=\cdots=Y_r$; write
$Z=Y^r$. The diagonal matrices in $\GL_r$ show that $Y$ is
stable under multiplication by nonzero scalars. Since $Y$ is
nonempty and closed, it also contains zero. For $x,y\in Y$,
the tuple $(x,y,0,\ldots,0)$ belongs to $Z$. Applying the
matrix that replaces its first coordinate by the sum of its
first two coordinates shows that $x+y\in Y$. Thus $Y=A$ is
a vector subspace of dimension $d$.

The inclusion $A^r\subseteq\Cr(\N(\p))$ implies that every
element of $A$ is nilpotent and that $[A,A]=0$.
Moreover, $K$-stability of $Z$ implies $K$-stability of $A$.
Let $\xi$ be the grading element. Since $A$ is stable under
$\operatorname{ad}\xi$, its decomposition into eigenspaces is
\[
 A=(A\cap\pp)\oplus(A\cap\pmns).
\]
Each intersection is a $K$-submodule. By
\cref{lem:hermitian-basic}, the modules $\pp$ and $\pmns$
are irreducible, so each intersection is either zero or the
whole corresponding summand. Since both summands have
dimension $d$ and $\dim A=d$, precisely one is present.
Therefore $A=\pp$ or $A=\pmns$, proving \textup{(ii)}.
\end{proof}

\begin{remark}\label{rem:hermitian-r2}
Part \textup{(i)} applies to $r=2$, but the uniqueness assertion
in part \textup{(ii)} does not extend to that case in general.
The maximal-dimensional components for two entries already
have the orbit-theoretic description of Bulois
\cite[Proposition~1.5]{Bulois}:
\[
 \overline{K\cdot\bigl(\{e\}\times\mathfrak z_\p(e)\bigr)},
\]
where $e$ runs through representatives of the
$\p$-distinguished nilpotent $K$-orbits.
Here $e$ is called $\p$-distinguished when
$\mathfrak z_\p(e)\subseteq\N(\p)$.
Thus additional maximal-dimensional components may occur at
$r=2$; the following example exhibits one.
\end{remark}

\begin{example}\label{ex:hermitian-r2-extra}
Consider the pair of type $\mathrm{AIII}(1,2)$. Relative to
$\C^3=\C\oplus\C^2$, an element of $\p$ has the form
\[
 x=\begin{pmatrix}0&b\\c&0\end{pmatrix},
 \qquad b\in\operatorname{Mat}_{1\times2}(\C),\quad
 c\in\operatorname{Mat}_{2\times1}(\C).
\]
We have $x^3=(bc)x$, and $x$ is nilpotent if and only if
$bc=0$. Consequently, $\N(\p)$ is the irreducible quadric
cone $b_1c_1+b_2c_2=0$ in $\C^4$, of dimension three.
The closed irreducible subvariety
\[
 Z=\overline{\{(\lambda x,\mu x):
            x\in\N(\p),\ (\lambda,\mu)\in\C^2\}}
 \subseteq C_2(\N(\p))
\]
has dimension four. Indeed, the parametrisation has
one-dimensional fibres over nonzero pairs: two presentations
differ precisely by rescaling $x$ and inversely rescaling
$(\lambda,\mu)$. Since $d=2$, this equals
$\dim C_2(\N(\p))$, so $Z$ is an irreducible component.
For
\[
 x_0=\begin{pmatrix}0&1&0\\0&0&0\\1&0&0\end{pmatrix},
\]
we have $x_0\in\N(\p)$ and both off-diagonal blocks are
nonzero. Hence $(x_0,x_0)\in Z$ belongs to neither
$(\pp)^2$ nor $(\pmns)^2$. This proves that $Z$ is an
additional maximal-dimensional component.
\end{example}

By \cref{lem:products}, the classification extends to semisimple
Hermitian pairs, one simple factor at a time.
\begin{corollary}\label{cor:hermitian-product-components}
Suppose that, after discarding factors with zero $(-1)$-eigenspace,
$(\g,\kalg)$ is a direct sum of $t$ nonzero simple Hermitian pairs.
For $r\ge3$, its maximal-dimensional nilpotent commuting components
are precisely
\[
 A_{\boldsymbol\epsilon}^{\,r},\qquad
 A_{\boldsymbol\epsilon}
 =\bigoplus_{\nu=1}^t\mathfrak p_\nu^{\epsilon_\nu},
 \qquad \boldsymbol\epsilon\in\{+,-\}^t.
\]
There are exactly $2^t$ such components.
The $A_{\boldsymbol\epsilon}$ are also precisely the
half-dimensional abelian nilpotent subspaces of $\p$.
\end{corollary}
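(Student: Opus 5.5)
By \cref{lem:products}, $C_r(\N(\p))$ is the product of the factors $C_r(\N(\p_\nu))$, $\nu=1,\ldots,t$, together with the point contributed by any factor with $\p=0$. Dimensions add, and by \cref{thm:hermitian} each nonzero simple factor has dimension $rd_\nu$, where $d_\nu=\dim\p_\nu^+$. The maximal-dimensional irreducible components of a product of varieties over $\C$ are exactly the products of maximal-dimensional components of the factors. This is because the irreducible components of a product are the products of components, and the dimension of such a product is the sum of the dimensions of its factors. Applying \cref{thm:hermitian-max-components}(ii) to each factor, for $r\ge3$ we get exactly the components $\prod_\nu(\p_\nu^{\epsilon_\nu})^r$. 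Under the identification $\p^r=\prod_\nu\p_\nu^r$, this product equals $A_{\boldsymbol\epsilon}^{\,r}$. The subspaces $A_{\boldsymbol\epsilon}$ are pairwise distinct, since $\p_\nu^+\cap\p_\nu^-=0$ and each $d_\nu>0$. Hence there are exactly $2^t$ such components.

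For the last assertion, each $A_{\boldsymbol\epsilon}$ is abelian, because distinct factors commute and each $\p_\nu^{\pm}$ is abelian. It consists of nilpotent elements, since a sum of commuting nilpotent elements lying in distinct ideals is nilpotent. Its dimension is $\sum_\nu d_\nu=\frac12\dim\p$. For the converse, let $A$ be an abelian nilpotent subspace with $\dim A=\frac12\dim\p=:d_0$, and fix $r\ge\max\{3,d_0\}$. The variety $A^r$ is irreducible of dimension $rd_0$ and lies in $C_r(\N(\p))$. Its dimension equals $\dim C_r(\N(\p))$ by \cref{thm:hermitian}, so $A^r$ is a maximal-dimensional component. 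By the first part, $A^r=A_{\boldsymbol\epsilon}^{\,r}$ for some $\boldsymbol\epsilon$, and projecting onto the first coordinate gives $A=A_{\boldsymbol\epsilon}$.

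The only step requiring care is the product-of-components claim. It uses the fact that a product of irreducible varieties over an algebraically closed field is irreducible. The reducedness of the product is already recorded in \cref{lem:products}.
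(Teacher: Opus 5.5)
Your proof is correct and follows essentially the paper's argument. You reduce to the simple factors via \cref{lem:products} and apply \cref{thm:hermitian-max-components}(ii) factorwise; for the converse, $A^r$ is an irreducible subvariety of maximal dimension, hence equals some $A_{\boldsymbol\epsilon}^{\,r}$, and projecting to a coordinate recovers $A$. The only difference is cosmetic: the paper simply takes $r=3$, and your extra requirement $r\ge d_0$ is unnecessary.
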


\begin{proof}
The component statement follows from \cref{lem:products} and
\cref{thm:hermitian-max-components}. If $A$ is a half-dimensional
abelian nilpotent subspace, then $A^3$ is a closed irreducible
subvariety of maximal dimension in $C_3(\N(\p))$.
It is one of the listed components, and projection onto a
coordinate gives $A=A_{\boldsymbol\epsilon}$.
\end{proof}
Combining \cref{thm:hermitian} with the isotropy statement of
\cref{prop:universal-isotropy} gives a Lagrangian property.
\begin{corollary}\label{cor:hermitian-lagrangian}
Let $(\g,\kalg)$ be a complex semisimple Hermitian symmetric pair
and let $r\ge2$.
Every irreducible component of $\Cr(\N(\p))$ is simultaneously
isotropic for all $B|_\p\otimes J$.
If $r$ is even and $J$ is nondegenerate, every
maximal-dimensional component is Lagrangian for this form.
\end{corollary}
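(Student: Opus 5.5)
The first assertion is exactly \cref{prop:universal-isotropy}, which applies to every complex semisimple symmetric pair and so in particular to a Hermitian one. It shows that each $\omega_{ij}$ vanishes on the smooth locus of every irreducible component of $\Cr(\N(\p))$. Since the forms $\omega_{ij}$ span the family $B|_\p\otimes J$ with $J\in\Lambda^2(\C^r)^*$, and restriction of forms to $Z_{\sm}$ is linear in $J$, every $B|_\p\otimes J$ restricts to zero there. So the first sentence needs nothing beyond recalling that result.

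For the Lagrangian assertion, let $r$ be even and let $J$ be nondegenerate. We first check that $\omega_J=B|_\p\otimes J$ is symplectic on $\p^r=\p\otimes\C^r$. The form $B|_\p$ is nondegenerate by the preliminaries, and the tensor product of two nondegenerate bilinear forms is nondegenerate: the Gram matrix is a Kronecker product, whose determinant is a product of powers of the two determinants. The form is alternating because $B$ is symmetric and $J$ is alternating. Hence $\omega_J$ is a symplectic form on a space of dimension $r\dim\p$, and half of that dimension is $\tfrac r2\dim\p$.

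By \cref{thm:hermitian}, $\dim\Cr(\N(\p))=\tfrac r2\dim\p$. A maximal-dimensional component $Z$ therefore has exactly half the ambient dimension, and by the first part it is isotropic. This is precisely the definition of Lagrangian used in the paper, so $Z$ is Lagrangian.

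I expect no real obstacle here. The only points needing care are the nondegeneracy of the tensor-product form, which is routine, and noting that \cref{thm:hermitian} applies in the semisimple setting (including the degenerate case $\p=0$, where every statement is trivial). The hypothesis that $r$ is even is used only to guarantee that a nondegenerate $J$ on $\C^r$ exists.
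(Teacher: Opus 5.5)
Your proposal is correct and follows essentially the same route as the paper: isotropy is taken directly from \cref{prop:universal-isotropy}, and the Lagrangian property follows because $B|_\p\otimes J$ is symplectic for nondegenerate $J$ while \cref{thm:hermitian} gives $\dim\Cr(\N(\p))=\tfrac r2\dim\p$, half the ambient dimension. Your added check that the Kronecker-product form is nondegenerate fills in a detail the paper leaves implicit.
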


\begin{proof}
The isotropy follows from \cref{prop:universal-isotropy}.
For even $r$ and nondegenerate $J$, $B|_\p\otimes J$ is
symplectic, and \cref{thm:hermitian} gives half its ambient
dimension.
\end{proof}

\subsection{Abelian nilpotent parameter spaces}

The dimension formula also determines the dimensions of the
Grassmannian parameter spaces introduced in \cref{subsec:span}.

\begin{proposition}\label{prop:Ab-hermitian}
Let $(\g,\kalg)$ be a nonzero complex semisimple Hermitian pair,
and put $d_0=\frac12\dim\p$.
After discarding factors with zero $(-1)$-eigenspace, let $t$ be
the number of simple Hermitian factors.
Then
\[
 a(\p)=d_0,\qquad
 \Ab_{d_0}(\p)=
 \{A_{\boldsymbol\epsilon}:
                  \boldsymbol\epsilon\in\{+,-\}^t\}.
\]
Moreover, for $2\le e\le d_0$,
\[
 \dim\Ab_e(\p)=e(d_0-e).
\]
\end{proposition}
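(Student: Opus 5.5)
The plan splits into three parts: the value of $a(\p)$, the identification of $\Ab_{d_0}(\p)$, and the dimension formula for $\Ab_e(\p)$ with $2\le e\le d_0$.

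\emph{The value of $a(\p)$ and the top stratum.} The subspaces $A_{\boldsymbol\epsilon}$ are abelian, nilpotent and of dimension $d_0$. Combined with \cref{cor:abelian}(i), this gives $a(\p)=d_0$. The last sentence of \cref{cor:hermitian-product-components} shows that the $d_0$-dimensional abelian nilpotent subspaces are exactly the $A_{\boldsymbol\epsilon}$, which identifies $\Ab_{d_0}(\p)$. (That sentence was itself proved by comparing $A^3$ with the component list for $r=3$.)

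\emph{Upper bound for $2\le e\le d_0$.} I would not use \cref{cor:Ab-bound}, since it only gives $\lfloor ed_0\rfloor-e^2=e(d_0-e)$ when $\dim\p=2d_0$. That is exactly the desired bound, so the upper bound is already in hand. Alternatively, one can run \cref{prop:span-strata} with $r=e$ together with \cref{thm:hermitian}: the rank-$e$ stratum has dimension $e^2+\dim\Ab_e(\p)\le\dim C_e(\N(\p))=ed_0$.

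\emph{Lower bound.} Every $e$-dimensional subspace of $\pp$ is abelian and nilpotent, since $\pp$ is an abelian subalgebra of nilpotent elements by \cref{lem:hermitian-basic}. Hence $\Gr(e,\pp)\subseteq\Ab_e(\p)$ is a closed subvariety of dimension $e(d_0-e)$. In the semisimple case, $\pp$ is the sum of the positive pieces over all factors, and $\dim\pp=d_0$ still holds by \cref{lem:hermitian-basic}(v). This yields equality.

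The main subtlety is only bookkeeping. I need to check that the factor decomposition used for $\Ab_{d_0}$ matches the hypotheses of \cref{cor:hermitian-product-components}, which requires discarding zero factors without changing $\p$. I also need the restriction $e\ge2$, because \cref{thm:universal} requires $r\ge2$. For $e=1$ the formula fails in general, since $\dim\Ab_1(\p)=\dim\p-\ell-1$, and that is why $e=1$ is excluded.
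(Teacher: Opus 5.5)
Your proposal is correct and follows the paper's proof essentially step for step. You get $a(\p)=d_0$ from \cref{cor:abelian}(i) together with $\pp$, identify $\Ab_{d_0}(\p)$ via \cref{cor:hermitian-product-components}, obtain the lower bound from the closed inclusion $\Gr(e,\pp)\subseteq\Ab_e(\p)$, and obtain the upper bound $e(d_0-e)$ from \cref{cor:Ab-bound}. Your span-stratum argument with $r=e$ is just that corollary's own proof. The sentence saying you ``would not use'' \cref{cor:Ab-bound} and then invoking its conclusion contradicts itself; the paper applies the corollary directly, and it gives exactly the required bound.
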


\begin{proof}
The half-dimensional bound is \cref{cor:abelian}, and $\pp$
attains it. The description of $\Ab_{d_0}(\p)$ is
\cref{cor:hermitian-product-components}.
For $2\le e\le d_0$, the closed inclusion
$\Gr(e,\pp)\subseteq\Ab_e(\p)$ gives the lower bound
$e(d_0-e)$. The upper bound in \cref{cor:Ab-bound} is
$\lfloor e(2d_0)/2\rfloor-e^2=e(d_0-e)$.
\end{proof}

\begin{remark}\label{rem:Ab-hermitian}
For $2\le e\le d_0$, each closed irreducible subvariety
$\Gr(e,A_{\boldsymbol\epsilon})$ has dimension $e(d_0-e)$,
so it is a maximal-dimensional component of $\Ab_e(\p)$.
These subvarieties are distinct: the span of all the
$e$-planes in $\Gr(e,A_{\boldsymbol\epsilon})$ recovers
$A_{\boldsymbol\epsilon}$.
Consequently, $\Ab_e(\p)$ has at least $2^t$
maximal-dimensional components. This observation alone does
not classify all its maximal-dimensional components.

For $e=1$, however,
\[
 \Ab_1(\p)\cong\mathbb P\N(\p),\qquad
 \dim\Ab_1(\p)=2d_0-\ell-1,\qquad
 \ell=\rk(\g,\kalg).
\]
For a simple Hermitian pair with $d_0\ge2$ one has $\ell<d_0$, by
\eqref{eq:d-ab} in \cref{subsec:codim-estimate} below; so this dimension
exceeds $\dim\Gr(1,\pp)=d_0-1$. Thus the restriction $e\ge2$ in the preceding formula is essential.
\end{remark}

\section{The full higher commuting variety}\label{sec:full}

We now drop the nilpotency condition and study the full higher
commuting variety. Recall from \cref{def:Cr} that
\[
  \Cr(\p)
  =
  \bigl\{
    (x_1,\dots,x_r)\in\p^r:
    [x_i,x_j]=0
    \text{ for all }1\le i<j\le r
  \bigr\},
\]
regarded throughout as a reduced closed subvariety of \(\p^r\).

We cover \(\Cr(\p)\) by finitely many closed families indexed by
centralizers of semisimple elements. This reduces the dimension
problem to nilpotent higher commuting varieties for smaller symmetric
pairs. The first three subsections apply to arbitrary complex
semisimple symmetric pairs. Beginning with
\cref{subsec:codim-estimate}, we impose the Hermitian hypothesis and
obtain an exact dimension formula.

Throughout, \(\g=\kalg\oplus\p\) is a nonzero complex semisimple symmetric pair,
\(G\) is the adjoint group of \(\g\), and \(K=(G^{\theta})^{\circ}\), so that
\(\operatorname{Lie}K=\kalg\). We fix a Cartan subspace \(\mathfrak a\subseteq\p\)
and write \(\ell=\rk(\g,\kalg)=\dim\mathfrak a\).

\subsection{Restricted roots}\label{subsec:restricted}

We recall the standard apparatus \cite{KostantRallis},
\cite[Chapter~VII, \S2]{Helgason}. Since \(\mathfrak a\) is abelian and consists of
semisimple elements, \(\operatorname{ad}\mathfrak a\) acts diagonalisably on \(\g\), giving the
\emph{restricted root decomposition}
\begin{equation}\label{eq:restricted-decomp}
\g = \mathfrak{z}_{\g}(\mathfrak a)\oplus\bigoplus_{\alpha\in\Sigma}\g_{\alpha},
\qquad
\g_{\alpha}=\{x\in\g : [h,x]=\alpha(h)x\ \forall h\in\mathfrak a\},
\end{equation}
where \(\Sigma\subseteq\mathfrak a^{*}\setminus\{0\}\) is the restricted root
system. We use the following standard facts.
\begin{enumerate}[label=\textup{(R\arabic*)},leftmargin=*]
\item\label{R1} \(\mathfrak{z}_{\p}(\mathfrak a)=\mathfrak a\).
\item\label{R2} \(\theta(\g_{\alpha})=\g_{-\alpha}\), and therefore
\((\g_{\alpha}\oplus\g_{-\alpha})\cap\p=\{x-\theta x:x\in\g_{\alpha}\}\) has
dimension \(\dim\g_{\alpha}\). Consequently,
\begin{equation}\label{eq:dimp-roots}
\dim\p = \ell+\sum_{\alpha\in\Sigma^{+}}\dim\g_{\alpha}
\end{equation}
for any choice of positive system \(\Sigma^{+}\).
\item\label{R3} Every semisimple element of \(\p\) is \(K\)-conjugate to an element
of \(\mathfrak a\), and \(N_K(\mathfrak a)/Z_K(\mathfrak a)\) is the finite little Weyl group
\(W=W(\Sigma)\). For \(h\in\mathfrak a\),
\[
  \mathfrak{z}_\g(h) = \mathfrak{z}_\g(\mathfrak a)
  \oplus\bigoplus_{\alpha(h)=0}\g_\alpha.
\]
\end{enumerate}

Let \(\Delta=\{\alpha_1,\dots,\alpha_\ell\}\) be the simple roots of
\(\Sigma^{+}\). For \(I\subseteq\Delta\), we write
\(\Sigma_I=\Sigma\cap\Z I\) for the standard parabolic subsystem generated
by \(I\), put $\Sigma_I^+=\Sigma_I\cap\Sigma^+$, and set
\(\mathfrak a^{I}=\{u\in\mathfrak a : \alpha(u)=0\ \forall\alpha\in I\}\).

\subsection[Jordan decomposition and p-Levi subalgebras]
{Jordan decomposition and \texorpdfstring{$\p$}{p}-Levi subalgebras}
\label{subsec:plevi} The semisimple parts of a commuting tuple in $\p$ commute and determine
a centraliser in $\g$. We introduce the class of centralisers that
arises in this way.

\begin{definition}\label{def:pLevi} 
A \emph{\(\p\)-Levi subalgebra} of \((\g,\kalg)\) is a subalgebra of the form
\(\mathfrak l=\mathfrak{z}_{\g}(h)\) with \(h\in\p\) semisimple. Such an \(\mathfrak l\) is
reductive and \(\theta\)-stable (since \(\theta(\mathfrak l) = \mathfrak{z}_\g(\theta h) = \mathfrak{z}_\g(-h) = \mathfrak l\)), so
\[
\mathfrak l = \mathfrak{z}(\mathfrak l)\oplus[\mathfrak l,\mathfrak l],
\qquad
\mathfrak l\cap\p = \mathfrak c_{\mathfrak l}\oplus\p_{\mathfrak l},
\]
where
\[
\mathfrak c_{\mathfrak l} = \mathfrak{z}(\mathfrak l)\cap\p,
\qquad
\p_{\mathfrak l} = [\mathfrak l,\mathfrak l]\cap\p,
\qquad
c_{\mathfrak l} = \dim\mathfrak c_{\mathfrak l},
\qquad
\delta_{\mathfrak l} = \dim\p_{\mathfrak l}.
\]
We use the notation
\[
  \N(\p_{\mathfrak l})
  :=
  \N(\mathfrak l_{\mathrm{ss}})
  \cap\p_{\mathfrak l},
  \qquad
  \mathfrak l_{\mathrm{ss}}
  :=
  [\mathfrak l,\mathfrak l].
\]
Thus \(\N(\p_{\mathfrak l})\) is the nilpotent cone of the
\((-1)\)-eigenspace of the induced semisimple symmetric pair.
In particular, if
\(\kalg_{\mathfrak l}=\mathfrak l_{\mathrm{ss}}\cap\kalg\), then
\((\mathfrak l_{\mathrm{ss}},\kalg_{\mathfrak l})\) is again a complex
semisimple symmetric pair (with the convention that if \(\mathfrak l_{\mathrm{ss}}=0\), then \(\p_{\mathfrak l}=0\) and \(\dim\Cr(\N(\p_{\mathfrak l}))=0\)). Its \((-1)\)-eigenspace is
\(\p_{\mathfrak l}\), so \cref{thm:universal} applies to it.
\end{definition}

\begin{lemma}\label{lem:jordan-plevi}
Let \(\mathbf x=(x_1,\dots,x_r)\in\Cr(\p)\) and let \(x_i=s_i+n_i\) be the Jordan
decomposition in \(\g\). Then:
\begin{enumerate}[label=\textup{(\roman*)}]
\item \(s_i,n_i\in\p\) for every \(i\);
\item the \(2r\) elements \(s_1,\dots,s_r,n_1,\dots,n_r\) commute pairwise;
\item \(\mathfrak l:=\mathfrak{z}_{\g}(s_1,\dots,s_r)\) is a \(\p\)-Levi subalgebra;
\item \(s_i\in\mathfrak c_{\mathfrak l}\) and \(n_i\in\p_{\mathfrak l}\) for every
\(i\); in particular,
\((n_1,\dots,n_r)\in\Cr\bigl(\N(\p_{\mathfrak l})\bigr)\).
\end{enumerate}
\end{lemma}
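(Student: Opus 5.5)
We argue directly from the properties of the Jordan decomposition in the semisimple Lie algebra $\g$: uniqueness, and the fact that the semisimple and nilpotent parts are polynomials in $\operatorname{ad}x$ (more precisely, $\operatorname{ad}s$ and $\operatorname{ad}n$ are polynomials without constant term in $\operatorname{ad}x$).

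For (i), note that $\theta$ is a Lie algebra automorphism, so $\theta(x_i)=\theta(s_i)+\theta(n_i)$ is again a Jordan decomposition. Likewise $-x_i=(-s_i)+(-n_i)$ is a Jordan decomposition. Since $\theta(x_i)=-x_i$, uniqueness gives $\theta(s_i)=-s_i$ and $\theta(n_i)=-n_i$.

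For (ii), since $[x_i,x_j]=0$, the operator $\operatorname{ad}x_j$ commutes with $\operatorname{ad}x_i$. It therefore commutes with any polynomial in $\operatorname{ad}x_i$, in particular with $\operatorname{ad}s_i$ and $\operatorname{ad}n_i$. Because $\operatorname{ad}$ is injective on semisimple $\g$, this gives $[x_j,s_i]=[x_j,n_i]=0$. Repeating the argument with $x_j$ in place of $x_i$ shows that $\operatorname{ad}s_j$ and $\operatorname{ad}n_j$ commute with $\operatorname{ad}s_i$ and $\operatorname{ad}n_i$. Hence all $2r$ elements commute pairwise.

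For (iii), which is the main point, we must produce a single semisimple $h\in\p$ with $\mathfrak z_\g(h)=\mathfrak z_\g(s_1,\dots,s_r)$. The $s_i$ are commuting semisimple elements of $\p$, so their span $S$ is an abelian subspace of $\p$ consisting of semisimple elements, since commuting semisimple operators are simultaneously diagonalisable. Extend $S$ to a Cartan subspace; by \ref{R3} we may conjugate by $K$ and assume $S\subseteq\mathfrak a$. Then each $\operatorname{ad}s_i$ acts on $\g_\alpha$ by $\alpha(s_i)$ and acts by zero on $\mathfrak z_\g(\mathfrak a)$, so
\[
\mathfrak z_\g(S)=\mathfrak z_\g(\mathfrak a)\oplus\bigoplus_{\alpha|_S=0}\g_\alpha .
\]
Choose $h\in S$ outside the finitely many proper hyperplanes $\ker(\alpha|_S)$ with $\alpha|_S\ne0$. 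Then $\alpha(h)=0$ if and only if $\alpha|_S=0$, and \ref{R3} gives $\mathfrak z_\g(h)=\mathfrak z_\g(S)=\mathfrak l$. Since $h$ lies in $\mathfrak a\subseteq\p$ and is semisimple, $\mathfrak l$ is a $\p$-Levi subalgebra; conjugating back by $K$ preserves this property.

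For (iv), each $s_i$ commutes with every $s_j$, so $s_i\in\mathfrak l$. It also centralises $\mathfrak l$, since $\mathfrak l$ is by definition the centraliser of all the $s_j$. Hence $s_i\in\mathfrak z(\mathfrak l)\cap\p=\mathfrak c_{\mathfrak l}$. Next, by (ii) we have $n_i\in\mathfrak l\cap\p=\mathfrak c_{\mathfrak l}\oplus\p_{\mathfrak l}$; write $n_i=c+p$ accordingly. The element $c$ is central in the reductive algebra $\mathfrak l$, and $\mathfrak l=\mathfrak z_\g(h)$ with $h$ semisimple, so $\mathfrak z(\mathfrak l)$ consists of semisimple elements of $\g$. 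Then $c$ and $p$ commute, and $\operatorname{ad}_\g n_i=\operatorname{ad}c+\operatorname{ad}p$ is nilpotent. Restricting to $\mathfrak l$, $\operatorname{ad}c$ vanishes there, so $\operatorname{ad}_{\mathfrak l}p$ is nilpotent. Since $\operatorname{ad}c$ is semisimple and commutes with $\operatorname{ad}p$, uniqueness of the Jordan decomposition of the operator $\operatorname{ad}_\g n_i$ forces $\operatorname{ad}c$ to equal the semisimple part of this nilpotent operator, which is zero; hence $c=0$. Thus $n_i\in\p_{\mathfrak l}$. Finally, $n_i$ is nilpotent in $\g$, hence $\operatorname{ad}$-nilpotent on $\mathfrak l_{\mathrm{ss}}$, so $n_i\in\N(\p_{\mathfrak l})$. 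Together with (ii), this gives $(n_1,\dots,n_r)\in\Cr(\N(\p_{\mathfrak l}))$.

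We expect the main obstacle to be (iii), specifically the existence of a generic element $h$ realising the joint centraliser. Once the $s_i$ are conjugated into a common Cartan subspace, this reduces to avoiding finitely many hyperplanes.
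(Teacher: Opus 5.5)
Your arguments for (i)--(iii), and for $s_i\in\mathfrak c_{\mathfrak l}$, are essentially the paper's. One citation slip: in (iii), conjugating all of $S$ into $\mathfrak a$ at once uses the $K$-conjugacy of Cartan subspaces, not \ref{R3}, which concerns single elements.

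The gap is in the step $c=0$ of (iv). Uniqueness of the Jordan decomposition of $\operatorname{ad}_\g n_i=\operatorname{ad}c+\operatorname{ad}p$ identifies $\operatorname{ad}c$ as the semisimple part only if $\operatorname{ad}_\g p$ is nilpotent. You have only shown that $\operatorname{ad}_{\mathfrak l}p$ is nilpotent, and that does not suffice by itself. As written, your argument never uses $p\in[\mathfrak l,\mathfrak l]$. Without that hypothesis it would equally ``prove'' $z=0$ from the decomposition $0=z+(-z)$ for any $z\in\mathfrak z(\mathfrak l)$: here $\operatorname{ad}_{\mathfrak l}(-z)=0$, yet $\operatorname{ad}_\g z$ is a nonzero semisimple operator when $z\neq0$.

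The missing input is that Jordan decompositions in the semisimple algebra $[\mathfrak l,\mathfrak l]$ agree with those in $\g$. Since $[\mathfrak l,\mathfrak l]$ is an ideal of $\mathfrak l$ containing $p$, nilpotence of $\operatorname{ad}_{\mathfrak l}p$ shows that $p$ is ad-nilpotent in $[\mathfrak l,\mathfrak l]$, hence a nilpotent element of that semisimple algebra. The abstract Jordan decomposition is preserved by every finite-dimensional representation. Applying this to $\operatorname{ad}_\g|_{[\mathfrak l,\mathfrak l]}\colon[\mathfrak l,\mathfrak l]\to\mathfrak{gl}(\g)$ shows that $\operatorname{ad}_\g p$ is nilpotent. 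Your uniqueness argument then gives $\operatorname{ad}c=0$, and $c=0$ by injectivity of $\operatorname{ad}$.

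This is exactly the input the paper uses. It takes the Jordan decomposition $y=y_s+y_n$ inside $[\mathfrak l,\mathfrak l]$ and notes that it is also the decomposition in $\g$. Uniqueness applied to $n_i=(z+y_s)+y_n$ then gives $z+y_s=0$, and directness of $\mathfrak z(\mathfrak l)\oplus[\mathfrak l,\mathfrak l]$ gives $z=y_s=0$. With this one line added, your proof is complete.
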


\begin{proof}
(i) We have \(-x_i=\theta(x_i)=\theta(s_i)+\theta(n_i)\), where \(\theta(s_i)\) is
semisimple and \(\theta(n_i)\) is nilpotent and commutes with \(\theta(s_i)\).
By uniqueness of the Jordan decomposition of \(-x_i = (-s_i) + (-n_i)\),
we obtain \(\theta(s_i)=-s_i\) and \(\theta(n_i)=-n_i\), so \(s_i, n_i \in \p\).

(ii) In the faithful adjoint representation,
\[
  \operatorname{ad}(x_i) = \operatorname{ad}(s_i) + \operatorname{ad}(n_i)
\]
is the Jordan decomposition of \(\operatorname{ad}(x_i)\). In characteristic zero,
the semisimple and nilpotent parts of an endomorphism are polynomials without
constant term in that endomorphism. Consequently, the semisimple and nilpotent
parts of commuting endomorphisms commute pairwise. Since \(\g\) is
semisimple, \(\operatorname{ad}\colon\g\to\End(\g)\) is injective, and the assertion
follows.

(iii) The span \(\mathfrak a'=\langle s_1,\dots,s_r\rangle\) is an abelian
subspace of \(\p\) consisting of commuting semisimple elements, so every element
of \(\mathfrak a'\) is semisimple. By the conjugacy theorem for Cartan
subspaces \cite{KostantRallis}, \(\mathfrak a'\) is contained in a Cartan
subspace of \(\p\). After \(K\)-conjugation, we may assume that
\(\mathfrak a'\subseteq\mathfrak a\). By \eqref{eq:restricted-decomp},
\[
\mathfrak{z}_{\g}(\mathfrak a') = \mathfrak{z}_{\g}(\mathfrak a)\oplus
\bigoplus_{\alpha|_{\mathfrak a'}=0}\g_{\alpha},
\qquad
\mathfrak{z}_{\g}(h) = \mathfrak{z}_{\g}(\mathfrak a)\oplus
\bigoplus_{\alpha(h)=0}\g_{\alpha}\qquad(h\in\mathfrak a').
\]
Because \(\Sigma\) is finite, a generic element \(h=\sum_i c_i s_i\in\mathfrak a'\)
satisfies \(\alpha(h)\ne0\) for every \(\alpha\in\Sigma\) that does not vanish
identically on \(\mathfrak a'\). For such an \(h\), the two right-hand sides
coincide, showing that \(\mathfrak l=\mathfrak{z}_{\g}(\mathfrak a')=\mathfrak{z}_{\g}(h)\)
is a \(\p\)-Levi subalgebra.

(iv) By definition of \(\mathfrak l = \mathfrak{z}_\g(s_1,\dots,s_r)\), every
element of \(\mathfrak l\) commutes with each \(s_i\). Since the \(s_j\) commute
pairwise, each \(s_i \in \mathfrak l\), which implies
\(s_i \in \mathfrak{z}(\mathfrak l) \cap \p = \mathfrak c_{\mathfrak l}\).

Each \(n_i\) lies in \(\mathfrak l\cap\p\) by (ii). Write \(n_i=z+y\) with
\(z\in\mathfrak c_{\mathfrak l}\) and \(y\in\p_{\mathfrak l}\), and let \(y=y_s+y_n\)
be the Jordan decomposition of \(y\) inside the semisimple algebra
\([\mathfrak l,\mathfrak l]\), which coincides with its Jordan decomposition in \(\g\).
Because \(\mathfrak l = \mathfrak{z}_\g(h)\) with \(h\) semisimple in semisimple \(\g\),
its center \(\mathfrak{z}(\mathfrak l)\) is a toral subalgebra, so \(z\) is semisimple.
Since \(z \in \mathfrak{z}(\mathfrak l)\), it commutes with \(y_s\) and \(y_n\).
Thus, \(z+y_s\) is semisimple, \(y_n\) is nilpotent, the two commute, and
\(n_i=(z+y_s)+y_n\). Uniqueness of the Jordan decomposition of the nilpotent
element \(n_i\) forces \(z+y_s=0\). By directness of
\(\mathfrak{z}(\mathfrak l)\oplus[\mathfrak l,\mathfrak l]\), this yields \(z=0\)
and \(y_s=0\). Hence \(n_i=y_n\in\p_{\mathfrak l}\). Finally, since the \(n_i\)
commute and are nilpotent in \([\mathfrak l,\mathfrak l]\), we have
\((n_1,\dots,n_r)\in\Cr(\N(\p_{\mathfrak l}))\).
\end{proof}

Up to $K$-conjugacy, $\p$-Levi subalgebras are described by subsets of
the simple restricted roots, and their invariants $c_{\mathfrak l}$ and
$\delta_{\mathfrak l}$ can be read off from restricted-root data.

\begin{lemma}\label{lem:plevi-roots}
Every \(\p\)-Levi subalgebra of \((\g,\kalg)\) is \(K\)-conjugate to a
subalgebra of the form
\[
\mathfrak l_I = \mathfrak{z}_{\g}(\mathfrak a)\oplus\bigoplus_{\alpha\in\Sigma_I}
\g_{\alpha},\qquad I\subseteq\Delta.
\]
Conversely, every such \(\mathfrak l_I\) is a \(\p\)-Levi subalgebra. In
particular, there are only finitely many \(K\)-conjugacy classes of
\(\p\)-Levi subalgebras. Moreover, writing \(s=|I|\),
\begin{equation}\label{eq:c-delta}
\mathfrak c_{\mathfrak l_I} = \mathfrak a^{I},
\qquad
c_{\mathfrak l_I} = \ell-s,
\qquad
\delta_{\mathfrak l_I}
= s + \sum_{\alpha\in\Sigma_I^{+}}\dim\g_{\alpha}.
\end{equation}
\end{lemma}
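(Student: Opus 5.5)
We first show that every \(\p\)-Levi subalgebra is conjugate to some \(\mathfrak l_I\), then check that each \(\mathfrak l_I\) is a \(\p\)-Levi subalgebra, and finally compute the invariants in \eqref{eq:c-delta}.

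\emph{Conjugacy.} Let \(\mathfrak l=\mathfrak z_\g(h)\) with \(h\in\p\) semisimple. By \ref{R3} we may \(K\)-conjugate so that \(h\in\mathfrak a\). Then \(\mathfrak l=\mathfrak z_\g(\mathfrak a)\oplus\bigoplus_{\alpha(h)=0}\g_\alpha\). The set \(\{\alpha\in\Sigma:\alpha(h)=0\}\) is not yet standard, so we conjugate \(h\) further by \(N_K(\mathfrak a)\). We regard \(h\) in the real form \(\mathfrak a_\R\) spanned by the coroots, decomposing \(h=h_1+ih_2\) with \(h_1,h_2\in\mathfrak a_\R\). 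The condition \(\alpha(h)=0\) is equivalent to \(\alpha(h_1)=\alpha(h_2)=0\). The zero set is therefore the root subsystem \(\Sigma\cap V^\perp\) of roots vanishing on the real span \(V\) of \(h_1,h_2\). It is not enough to move \(h_1\) into the closed dominant chamber, because the stabiliser subsystem then still has to be arranged for \(h_2\). Instead we use the standard fact that for any subspace \(V\subseteq\mathfrak a_\R\), the subsystem \(\Sigma\cap V^\perp\) is the root system of the pointwise stabiliser of a generic point \(v\) of \(V\). Choosing \(v=h_1+t h_2\) with generic real \(t\), we get \(\{\alpha:\alpha(h)=0\}=\{\alpha:\alpha(v)=0\}\). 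Next we pick \(w\in W\) with \(wv\) dominant. The roots vanishing on a dominant element form \(\Sigma_I\), where \(I=\{\alpha_i\in\Delta:\alpha_i(wv)=0\}\). Lifting \(w\) to \(N_K(\mathfrak a)\), which permutes the \(\g_\alpha\) and preserves \(\mathfrak z_\g(\mathfrak a)\), gives \(\mathfrak l\sim\mathfrak l_I\). Finiteness of conjugacy classes follows.

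\emph{Converse.} For \(\mathfrak l_I\), we take \(h\in\mathfrak a^I\) generic, so that \(\alpha(h)\neq0\) for every \(\alpha\notin\Sigma_I\). This uses the fact that a root vanishing on \(\mathfrak a^I\) lies in \(\Sigma_I\): write \(\alpha\) in simple roots, and pairing with the fundamental coweights \(\varpi_j^\vee\in\mathfrak a^I\) for \(j\notin I\) forces those coefficients to vanish. Then \ref{R3} gives \(\mathfrak z_\g(h)=\mathfrak l_I\).

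\emph{Invariants.} The centre of \(\mathfrak l_I\) meets \(\p\) in \(\mathfrak z_\p(\mathfrak l_I)\subseteq\mathfrak z_\p(\mathfrak a)=\mathfrak a\) by \ref{R1}, and an element \(u\in\mathfrak a\) is central exactly when \(\alpha(u)=0\) for all \(\alpha\in\Sigma_I\). Hence \(\mathfrak c_{\mathfrak l_I}=\mathfrak a^I\), and \(c_{\mathfrak l_I}=\ell-s\) by linear independence of simple roots. For \(\delta\), we compute \(\dim(\mathfrak l_I\cap\p)=\ell+\sum_{\alpha\in\Sigma_I^+}\dim\g_\alpha\) as in \ref{R2}, using \(\mathfrak z_\g(\mathfrak a)\cap\p=\mathfrak a\) and the fact that each pair \(\pm\alpha\) contributes \(\dim\g_\alpha\). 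Subtracting \(c_{\mathfrak l_I}\) gives \(\delta_{\mathfrak l_I}=s+\sum_{\alpha\in\Sigma_I^+}\dim\g_\alpha\).

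The main obstacle is the conjugacy step for complex \(h\). We need the generic-real-point trick together with the fact that \(\Sigma\) is a genuine (possibly nonreduced) root system with Weyl group \(W\) realised by \(N_K(\mathfrak a)\), which \ref{R3} supplies.
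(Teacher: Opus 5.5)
Your proposal is correct and follows the paper's proof essentially step for step: conjugate \(h\) into \(\mathfrak a\), replace \(h=h_1+ih_2\) by a generic real element \(h_1+th_2\) with the same vanishing restricted roots, move that element into the closed chamber by a lift of \(w\in W\) to \(N_K(\mathfrak a)\), and then read off \(\mathfrak c_{\mathfrak l_I}=\mathfrak a^I\) and \(\delta_{\mathfrak l_I}\) from \ref{R1} and \ref{R2}. Your fundamental-coweight argument, showing that a root vanishing on \(\mathfrak a^I\) lies in \(\Sigma_I\), supplies the detail for the converse, which the paper asserts without justification.
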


\begin{proof}
The classification is the restricted-root form of the description
of semisimple centralizers by Satake subdiagrams; see
\cite[Proposition~1.5]{PanyushevYakimova}.
For completeness, after $K$-conjugation let the defining semisimple
element lie in $\mathfrak a$. Choose a real form
$\mathfrak a_{\R}\subseteq\mathfrak a$ on which all restricted
roots are real-valued, and write $h=u+iv$ with
$u,v\in\mathfrak a_{\R}$.
Then $\alpha(h)=0$ if and only if
$\alpha(u)=\alpha(v)=0$.
For a generic real number $t$, the element $h'=u+tv$
has exactly the same vanishing restricted roots as $h$,
and hence the same centralizer in $\g$.
We may now move $h'$ into a closed real Weyl chamber.
Its vanishing roots form $\Sigma_I$ for a subset
$I\subseteq\Delta$, proving the assertion.
Conversely, a generic element of $\mathfrak a^I$ has centralizer
$\mathfrak l_I$.

It remains to establish the formulas in \eqref{eq:c-delta}. If
\(u\in\mathfrak a^{I}\), then \(u\) centralizes
\(\mathfrak{z}_\g(\mathfrak a)\) and acts by zero on each root space
\(\g_\beta\) for \(\beta\in\Sigma_I\subseteq\Z I\) (since \(\beta(u)=0\));
thus \(u\in\mathfrak{z}(\mathfrak l_I)\cap\p\). Conversely, if
\(u\in\mathfrak{z}(\mathfrak l_I)\cap\p\), then \(u\) commutes with
\(\mathfrak a\) (since \(\mathfrak a\subseteq\mathfrak{z}_\g(\mathfrak a)\subseteq\mathfrak l_I\)),
so \(u\in\mathfrak{z}_{\p}(\mathfrak a)=\mathfrak a\) by \ref{R1}.
Furthermore, \(u\) centralizes \(\g_{\alpha}\) for every \(\alpha\in I\),
so \(\alpha(u)=0\). Since the simple roots in \(I\) are linearly
independent, \(\mathfrak c_{\mathfrak l_I} = \mathfrak a^I\) and
\(c_{\mathfrak l_I} = \dim\mathfrak a^{I} = \ell-s\).

Finally, by \ref{R2}, the subspace
\[
\mathfrak l_I\cap\p = \mathfrak a \oplus \bigoplus_{\alpha\in\Sigma_I^{+}}
(\g_{\alpha}\oplus\g_{-\alpha})\cap\p
\]
has dimension \(\ell + \sum_{\alpha\in\Sigma_I^{+}}\dim\g_{\alpha}\).
Because \(\mathfrak l_I\cap\p = \mathfrak c_{\mathfrak l_I}\oplus\p_{\mathfrak l_I}\),
we obtain
\[
\delta_{\mathfrak l_I}
= \dim(\mathfrak l_I\cap\p) - c_{\mathfrak l_I}
= \Bigl(\ell + \sum_{\alpha\in\Sigma_I^{+}}\dim\g_{\alpha}\Bigr) - (\ell - s)
= s + \sum_{\alpha\in\Sigma_I^{+}}\dim\g_{\alpha},
\]
which completes the proof.
\end{proof}

We also need the dimension of the space of $K$-conjugates of a
$\p$-Levi subalgebra.

\begin{lemma}\label{lem:plevi-orbit-dim}
For every \(\p\)-Levi subalgebra \(\mathfrak l\),
\[
\dim K - \dim N_K(\mathfrak l) = \dim\p - c_{\mathfrak l} - \delta_{\mathfrak l}.
\]
\end{lemma}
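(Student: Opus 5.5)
We reduce to the case $\mathfrak l=\mathfrak l_I$ by \cref{lem:plevi-roots}, since $K$-conjugation preserves both sides. The plan is to identify the Lie algebra of $N_K(\mathfrak l)$ and then count dimensions using restricted roots.

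First we show that $\operatorname{Lie}N_K(\mathfrak l)=\mathfrak n_\g(\mathfrak l)\cap\kalg=\mathfrak l\cap\kalg$. The first equality is the standard description of the Lie algebra of a normaliser of a subspace under a rational action (in characteristic zero the scheme-theoretic stabiliser is reduced). For the second, we use that $\mathfrak l=\mathfrak z_\g(h)$ with $h$ semisimple. Then $\mathfrak l$ is a Levi subalgebra of a parabolic of $\g$, hence self-normalising: $\mathfrak n_\g(\mathfrak l)=\mathfrak l$. Concretely, with $h\in\mathfrak a$, $\operatorname{ad}h$ is diagonalisable. If $[x,\mathfrak l]\subseteq\mathfrak l$, write $x=x_0+\sum_{\lambda\ne0}x_\lambda$ in $\operatorname{ad}h$-eigenspaces. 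Then $[x,h]=-\sum\lambda x_\lambda\in\mathfrak l$ lies in the $0$-eigenspace, which forces $x_\lambda=0$. So $x\in\mathfrak l$. This step is routine and avoids citing parabolic theory.

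Next we compute $\dim K-\dim(\mathfrak l\cap\kalg)=\dim\kalg-\dim(\mathfrak l\cap\kalg)$ via restricted roots. Write $\mathfrak m=\mathfrak z_\kalg(\mathfrak a)$, so that $\mathfrak z_\g(\mathfrak a)=\mathfrak m\oplus\mathfrak a$ by \ref{R1}. By \ref{R2}, for each $\alpha\in\Sigma^+$ the space $(\g_\alpha\oplus\g_{-\alpha})\cap\kalg=\{x+\theta x:x\in\g_\alpha\}$ has dimension $\dim\g_\alpha$. This gives
\[
\dim\kalg=\dim\mathfrak m+\sum_{\alpha\in\Sigma^+}\dim\g_\alpha,\qquad
\dim(\mathfrak l_I\cap\kalg)=\dim\mathfrak m+\sum_{\alpha\in\Sigma_I^+}\dim\g_\alpha .
\]
The difference is therefore $\sum_{\alpha\in\Sigma^+\setminus\Sigma_I^+}\dim\g_\alpha$.

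Finally we compare with the right-hand side. By \eqref{eq:dimp-roots} and \eqref{eq:c-delta},
\[
\dim\p-c_{\mathfrak l}-\delta_{\mathfrak l}
=\Bigl(\ell+\sum_{\Sigma^+}\dim\g_\alpha\Bigr)-(\ell-s)-\Bigl(s+\sum_{\Sigma_I^+}\dim\g_\alpha\Bigr),
\]
which equals the same sum $\sum_{\alpha\in\Sigma^+\setminus\Sigma_I^+}\dim\g_\alpha$. This proves the identity.

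The main obstacle is the identification of the Lie algebra of $N_K(\mathfrak l)$ with $\mathfrak l\cap\kalg$. This needs both the self-normalising argument above and care that the normaliser in $K$ has the expected tangent space, which holds because we are in characteristic zero. Everything else is bookkeeping with \ref{R2}.
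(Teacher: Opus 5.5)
Your proof is correct. Its first half, that $\operatorname{Lie}N_K(\mathfrak l)=\kalg\cap\mathfrak n_\g(\mathfrak l)=\kalg\cap\mathfrak l$ because $\mathfrak z_\g(h)$ is self-normalising, is essentially the paper's argument. The two proofs differ in how they compute $\dim\kalg-\dim(\kalg\cap\mathfrak l)$.

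The paper uses no restricted roots and no reduction to standard form. It notes that $\operatorname{ad}h$ maps $\kalg\to\p$ and $\p\to\kalg$, and that these two maps are adjoint up to sign for the nondegenerate forms $B|_\kalg$ and $B|_\p$. Hence they have equal rank. Their kernels are $\kalg\cap\mathfrak l$ and $\p\cap\mathfrak l=\mathfrak c_{\mathfrak l}\oplus\p_{\mathfrak l}$, which gives the identity in one line for any $\mathfrak l=\mathfrak z_\g(h)$.

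You instead conjugate to $\mathfrak l_I$ via \cref{lem:plevi-roots} and count root spaces on both sides. For this you use the $\kalg$-analogue of \ref{R2}, which holds by the same argument applied to $x\mapsto x+\theta x$ on $\g_\alpha$, together with \eqref{eq:dimp-roots} and \eqref{eq:c-delta}.

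Your route depends on the classification lemma and on the restricted-root decomposition of $\kalg$. In return it gives an explicit common value for both sides, namely $\sum_{\alpha\in\Sigma^+\setminus\Sigma_I^+}\dim\g_\alpha$.

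The paper's route is shorter and more intrinsic. It in fact shows $\dim\kalg-\dim\mathfrak z_\kalg(h)=\dim\p-\dim\mathfrak z_\p(h)$ for every $h\in\p$. Semisimplicity of $h$ is needed only for the self-normalising step.
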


\begin{proof}
Write \(\mathfrak l = \mathfrak{z}_\g(h)\) with \(h\in\p\) semisimple. First,
the Lie algebra of \(N_K(\mathfrak l)\) is
\(\kalg\cap\mathfrak n_{\g}(\mathfrak l) = \kalg\cap\mathfrak l\), since
\(\mathfrak l\) is self-normalizing in \(\g\): if \(y\in\mathfrak n_\g(\mathfrak l)\),
then \([y,h]\in\mathfrak l = \mathfrak{z}_\g(h)\), so \((\operatorname{ad} h)^2(y) = 0\);
since \(\operatorname{ad} h\) is semisimple, this forces \([y,h]=0\), whence
\(y\in\mathfrak l\).

Next, \(\operatorname{ad} h\) maps \(\kalg\to\p\) and \(\p\to\kalg\). These two
linear maps are mutual adjoints up to sign with respect to the nondegenerate
symmetric forms \(B|_{\kalg}\) and \(B|_{\p}\), since invariance gives
\[
B([h,x],y) = -B(x,[h,y]) \qquad (x\in\kalg,\, y\in\p).
\]
Consequently, they have equal rank:
\(\operatorname{rk}(\operatorname{ad} h|_{\kalg}) = \operatorname{rk}(\operatorname{ad} h|_{\p})\).
Their kernels are
\(\ker(\operatorname{ad} h|_\kalg) = \kalg\cap\mathfrak l\) and
\(\ker(\operatorname{ad} h|_\p) = \p\cap\mathfrak l\). Therefore,
\[
\dim\kalg - \dim(\kalg\cap\mathfrak l)
= \dim\p - \dim(\p\cap\mathfrak l)
= \dim\p - c_{\mathfrak l} - \delta_{\mathfrak l}.
\]
Since \(\dim K = \dim\kalg\) and \(\dim N_K(\mathfrak l) = \dim(\kalg\cap\mathfrak l)\),
the left-hand side equals \(\dim K - \dim N_K(\mathfrak l)\).
\end{proof}

\subsection[The p-Levi family bound and the case r=2]
{The \texorpdfstring{$\p$}{p}-Levi family bound, and the case
\texorpdfstring{$r=2$}{r=2}}
\label{subsec:plevi-bound}

Choose one representative \(\mathfrak l\) from each of the finitely many
\(K\)-conjugacy classes of \(\p\)-Levi subalgebras. Consider the morphism
\[
\Phi_{\mathfrak l}\colon
K\times\Bigl(\mathfrak c_{\mathfrak l}^{\,r}\times
\Cr\bigl(\N(\p_{\mathfrak l})\bigr)\Bigr)\longrightarrow\p^{r},
\qquad
(k,\mathbf s,\mathbf n)\longmapsto
k\cdot(s_1+n_1,\dots,s_r+n_r),
\]
and let \(X_{\mathfrak l}=\overline{\operatorname{im}\Phi_{\mathfrak l}}\).
The image lies in \(\Cr(\p)\) because \(\mathfrak c_{\mathfrak l}\) centralizes
\(\mathfrak c_{\mathfrak l}\oplus\p_{\mathfrak l}\). Conversely,
\cref{lem:jordan-plevi,lem:plevi-roots} show that every point of \(\Cr(\p)\) lies
in one of these images. Since \(\Cr(\p)\) is closed and only finitely many
\(\p\)-Levi classes occur,
\begin{equation}\label{eq:plevi-cover}
  \Cr(\p)=\bigcup_{\mathfrak l}X_{\mathfrak l},
  \qquad
  \dim\Cr(\p)=\max_{\mathfrak l}\dim X_{\mathfrak l}.
\end{equation}

\begin{proposition}
\label{prop:plevi-family-bound}
For every complex semisimple symmetric pair, every \(r\ge2\), and every
\(\p\)-Levi subalgebra \(\mathfrak l\),
\begin{equation}\label{eq:plevi-family-bound}
  \dim X_{\mathfrak l}
  \le
  \dim\p
  +(r-1)c_{\mathfrak l}
  +
  \left\lfloor
    \frac{(r-2)\delta_{\mathfrak l}}{2}
  \right\rfloor .
\end{equation}
In particular,
\[
  \dim X_{\mathfrak l}
  \le
  \dim\p
  +(r-1)c_{\mathfrak l}
  +\frac{r-2}{2}\delta_{\mathfrak l}.
\]
\end{proposition}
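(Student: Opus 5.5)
The plan is to bound the image of \(\Phi_{\mathfrak l}\) by a fibre-dimension count, exploiting the fact that \(\Phi_{\mathfrak l}\) is invariant under a free action of \(N_K(\mathfrak l)\).

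Write
\[
F_{\mathfrak l}:=\mathfrak c_{\mathfrak l}^{\,r}\times\Cr\bigl(\N(\p_{\mathfrak l})\bigr),
\qquad N:=N_K(\mathfrak l).
\]

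First I show that \(N\) preserves \(F_{\mathfrak l}\). An element \(n\in N\) is an automorphism of \(\g\) that normalises \(\mathfrak l\) and commutes with \(\theta\), since \(K\subseteq G^\theta\). Hence it preserves:
\begin{itemize}
\item \(\mathfrak z(\mathfrak l)\), \([\mathfrak l,\mathfrak l]\) and \(\p\), so it preserves \(\mathfrak c_{\mathfrak l}\) and \(\p_{\mathfrak l}\);
\item brackets and nilpotency in \(\mathfrak l_{\mathrm{ss}}\), so it preserves \(\Cr(\N(\p_{\mathfrak l}))\).
\end{itemize}
Thus \(N\) acts diagonally on \(F_{\mathfrak l}\). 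It therefore acts on \(K\times F_{\mathfrak l}\) by \(n\cdot(k,\mathbf s,\mathbf n)=(kn^{-1},n\cdot\mathbf s,n\cdot\mathbf n)\). This action is free because it is free on the \(K\)-factor, and \(\Phi_{\mathfrak l}\) is constant on its orbits.

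Next, every nonempty fibre of \(\Phi_{\mathfrak l}\) contains an \(N\)-orbit of dimension \(\dim N\). I apply the fibre-dimension theorem to the restriction of \(\Phi_{\mathfrak l}\) to each irreducible component of the domain; \(N\) may be disconnected, which is harmless since only dimensions matter. This gives
\[
\dim X_{\mathfrak l}\le \dim K-\dim N+r\,c_{\mathfrak l}+\dim\Cr\bigl(\N(\p_{\mathfrak l})\bigr).
\]

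Now I insert the two earlier estimates:
\begin{itemize}
\item \cref{lem:plevi-orbit-dim} gives \(\dim K-\dim N=\dim\p-c_{\mathfrak l}-\delta_{\mathfrak l}\).
\item \cref{thm:universal}, applied to the semisimple symmetric pair \((\mathfrak l_{\mathrm{ss}},\kalg_{\mathfrak l})\) whose \((-1)\)-eigenspace is \(\p_{\mathfrak l}\), gives \(\dim\Cr(\N(\p_{\mathfrak l}))\le\lfloor r\delta_{\mathfrak l}/2\rfloor\). When \(\delta_{\mathfrak l}=0\) this holds trivially by the stated convention.
\end{itemize}
Summing,
\[
\dim X_{\mathfrak l}\le \dim\p+(r-1)c_{\mathfrak l}+\Bigl\lfloor\frac{r\delta_{\mathfrak l}}2\Bigr\rfloor-\delta_{\mathfrak l}
=\dim\p+(r-1)c_{\mathfrak l}+\Bigl\lfloor\frac{(r-2)\delta_{\mathfrak l}}2\Bigr\rfloor,
\]
because \(\delta_{\mathfrak l}\) is an integer and can be moved inside the floor. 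The second displayed inequality follows by dropping the floor.

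The only point needing care is the \(N\)-stability of \(F_{\mathfrak l}\) together with the fibre-dimension step. The rest is bookkeeping with \cref{lem:plevi-orbit-dim} and \cref{thm:universal}.
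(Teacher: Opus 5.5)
Your argument is correct and is essentially the paper's proof: a free action of the normaliser on $K\times\mathfrak c_{\mathfrak l}^{\,r}\times\Cr(\N(\p_{\mathfrak l}))$ that preserves the fibres of $\Phi_{\mathfrak l}$, followed by \cref{lem:plevi-orbit-dim} and \cref{thm:universal} applied to $(\mathfrak l_{\mathrm{ss}},\kalg_{\mathfrak l})$. The one point you pass over quickly (that disconnectedness of $N$ is harmless) is made precise in the paper by using $N_K(\mathfrak l)^{\circ}$, which is connected and so fixes each irreducible component $Y$ of $\Cr(\N(\p_{\mathfrak l}))$; its orbits, of dimension $\dim N$, therefore lie in the fibres of $\Phi_{\mathfrak l}$ restricted to $K\times\mathfrak c_{\mathfrak l}^{\,r}\times Y$.
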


\begin{proof}
We abbreviate \(c=c_{\mathfrak l}\), \(\delta=\delta_{\mathfrak l}\), and put
\(N=N_K(\mathfrak l)^{\circ}\). The connected group \(N\) preserves
\(\mathfrak c_{\mathfrak l}\) and \(\p_{\mathfrak l}\), and acts trivially on
the finite set of irreducible components of \(\Cr(\N(\p_{\mathfrak l}))\);
hence \(N\) preserves every individual component.

Let \(Y\) be an irreducible component of \(\Cr(\N(\p_{\mathfrak l}))\). The group
\(N\) acts freely on \(K\times\mathfrak c_{\mathfrak l}^{\,r}\times Y\) by
\[
  u\cdot(k,\mathbf s,\mathbf n)
  =
  (ku^{-1},u\mathbf s,u\mathbf n).
\]
This action preserves the fibers of \(\Phi_{\mathfrak l}\). Therefore,
\[
  \dim\operatorname{im}
  \bigl(
    \Phi_{\mathfrak l}
    |_{K\times\mathfrak c_{\mathfrak l}^{\,r}\times Y}
  \bigr)
  \le
  \dim K-\dim N+rc+\dim Y.
\]
Since \(\dim N=\dim N_K(\mathfrak l)\), taking the maximum over all irreducible
components \(Y\) yields
\[
  \dim X_{\mathfrak l}
  \le
  \dim K-\dim N_K(\mathfrak l)
  +rc+\dim\Cr\bigl(\N(\p_{\mathfrak l})\bigr).
\]
By \cref{def:pLevi}, the pair \(([\mathfrak l,\mathfrak l],[\mathfrak l,\mathfrak l]\cap\kalg)\)
is a complex semisimple symmetric pair with \((-1)\)-eigenspace \(\p_{\mathfrak l}\).
Applying \cref{thm:universal} to this pair gives
\[
  \dim\Cr\bigl(\N(\p_{\mathfrak l})\bigr)
  \le
  \left\lfloor\frac{r\delta}{2}\right\rfloor .
\]
Combining this with \cref{lem:plevi-orbit-dim}, we obtain
\begin{align*}
  \dim X_{\mathfrak l}
  &\le
  (\dim\p-c-\delta)+rc
  +\left\lfloor\frac{r\delta}{2}\right\rfloor\\
  &=
  \dim\p+(r-1)c
  +\left\lfloor\frac{(r-2)\delta}{2}\right\rfloor ,
\end{align*}
where the last equality follows from \(-\delta + \lfloor r\delta/2 \rfloor = \lfloor (r-2)\delta/2 \rfloor\).
\end{proof}

The Cartan subspace supplies a lower bound, which matches the upper bound when $r=2$.

\begin{lemma}\label{lem:cartan-component}
For every complex semisimple symmetric pair and every $r\ge1$,
the subvariety $\overline{K\cdot\mathfrak a^r}$ is an
irreducible component of $\Cr(\p)$ of dimension
$\dim\p+(r-1)\ell$.
\end{lemma}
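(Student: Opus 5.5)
Three things need checking: that $\overline{K\cdot\mathfrak a^r}$ is irreducible, that its dimension is $\dim\p+(r-1)\ell$, and that it is not contained in a larger irreducible closed subset of $\Cr(\p)$. Irreducibility is immediate. Since $K\times\mathfrak a^r$ is irreducible, so is the closure of its image under the action morphism, and $\mathfrak a$ is abelian, so the image lies in the closed set $\Cr(\p)$.

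For the dimension, I restrict to the dense open subset $\mathfrak a^r_{\mathrm{reg}}$ of tuples whose first entry $h_1$ is regular, meaning $\alpha(h_1)\neq0$ for all $\alpha\in\Sigma$. Consider the map $K\times\mathfrak a^r_{\mathrm{reg}}\to\p^r$. If $k\cdot\mathbf h=\mathbf h'$ with both tuples in $\mathfrak a^r_{\mathrm{reg}}$, then $k h_1=h_1'$. By \ref{R1} and \ref{R3}, $\mathfrak z_\p(h_1)=\mathfrak a$ and $\mathfrak z_\p(h_1')=\mathfrak a$, so $k\mathfrak a=\mathfrak a$ and $k\in N_K(\mathfrak a)$. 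Hence the fibres are finite unions of $Z_K(\mathfrak a)$-cosets, of dimension $\dim Z_K(\mathfrak a)=\dim\mathfrak z_\kalg(\mathfrak a)$. The image therefore has dimension $\dim\kalg-\dim\mathfrak z_\kalg(\mathfrak a)+r\ell$. By the rank argument in the proof of \cref{lem:plevi-orbit-dim} with $h$ regular in $\mathfrak a$, we get $\dim\kalg-\dim\mathfrak z_\kalg(\mathfrak a)=\dim\p-\ell$. So the dimension is $\dim\p-\ell+r\ell=\dim\p+(r-1)\ell$.

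The component property is the main step. It suffices to show that $K\cdot\mathfrak a^r_{\mathrm{reg}}$ contains a nonempty subset that is open in $\Cr(\p)$. Let $U\subseteq\Cr(\p)$ be the open set of tuples whose first coordinate is regular semisimple. It is open because regular semisimplicity in $\p$ is the nonvanishing of a $K$-invariant discriminant polynomial on $\p$, pulled back from $\mathfrak a$ via Chevalley restriction. It is nonempty because it contains $\mathfrak a^r_{\mathrm{reg}}$. For $\mathbf x\in U$, conjugate so that $x_1\in\mathfrak a$ is regular, using \ref{R3}. Then each $x_j$ commutes with $x_1$, so $x_j\in\mathfrak z_\p(x_1)=\mathfrak a$ by \ref{R1} and \ref{R3}. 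Hence $U=K\cdot\mathfrak a^r_{\mathrm{reg}}$.

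Now suppose $Z\supseteq\overline{K\cdot\mathfrak a^r}$ is an irreducible closed subset of $\Cr(\p)$. Then $Z\cap U$ is a nonempty open subset of $Z$ that is contained in $K\cdot\mathfrak a^r$, so $Z=\overline{Z\cap U}\subseteq\overline{K\cdot\mathfrak a^r}$. The only point needing care is the case $r=1$, where $U$ is all regular semisimple elements and the argument still applies (giving $\Cr(\p)=\p$). The one potential obstacle is justifying that regular semisimplicity is an open condition. If one prefers to avoid invariant theory, it also follows from semicontinuity: $\dim\mathfrak z_\g(x)\ge\dim\mathfrak z_\g(\mathfrak a)$ holds for all $x\in\p$, and the locus where equality holds is open.
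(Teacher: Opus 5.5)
Your proof is correct and follows essentially the same route as the paper: the open set of tuples with regular semisimple first entry, identified with $K\cdot(\mathfrak a_{\mathrm{reg}}\times\mathfrak a^{r-1})$ via $\mathfrak z_\p(h_1)=\mathfrak a$, fibres computed through $N_K(\mathfrak a)$, and the rank argument of \cref{lem:plevi-orbit-dim} giving $\dim K-\dim Z_K(\mathfrak a)=\dim\p-\ell$, with openness of the regular semisimple locus taken from Kostant--Rallis. One caution about your optional fallback: the locus where $\dim\mathfrak z_\g(x)=\dim\mathfrak z_\g(\mathfrak a)$ is the regular locus, which also contains regular nilpotent elements (for instance $e$ in $(\mathfrak{sl}_2,\mathfrak{so}_2)$), so you should apply semicontinuity to the generalized kernel $\ker(\operatorname{ad}x)^{\dim\g}=\mathfrak z_\g(x_s)$ instead, whose minimal-dimension locus is exactly $K\cdot\mathfrak a_{\mathrm{reg}}$.
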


\begin{proof}
Let
\[
 \mathfrak a_{\mathrm{reg}}
 =\{h\in\mathfrak a:\alpha(h)\ne0 \text{ for every }\alpha\in\Sigma\}.
\]
The restricted-root decomposition gives
$\mathfrak z_\p(h)=\mathfrak a$ for
$h\in\mathfrak a_{\mathrm{reg}}$.
By \cite{KostantRallis}, the set
\[\p_{\mathrm{rss}}:=K\cdot\mathfrak a_{\mathrm{reg}}
\]
is a nonempty Zariski-open subset of $\p$.
Its elements are precisely the semisimple $x\in\p$ satisfying
$\dim\mathfrak z_\p(x)=\ell$; these are the regular semisimple
elements of the isotropy representation.
The open subset
\[
 U=\{(x_1,\ldots,x_r)\in\Cr(\p):x_1\in\p_{\mathrm{rss}}\}
\]
is the image of
\[
 K\times\bigl(\mathfrak a_{\mathrm{reg}}\times
                    \mathfrak a^{r-1}\bigr)
 \longrightarrow\p^r,
 \qquad (k,\mathbf h)\longmapsto k\mathbf h.
\]
Indeed, after conjugating $x_1$ into $\mathfrak a_{\mathrm{reg}}$,
commutativity forces every other entry into its centralizer
$\mathfrak a$ in $\p$.
Thus $U$ is irreducible, and its closure is
$\overline{K\cdot\mathfrak a^r}$. The closure of a nonempty
irreducible open subset is an irreducible component.

We compute the fibres of the displayed morphism.
If $k\mathbf h=k'\mathbf h'$, then $u=k^{-1}k'$ satisfies
$uh'_1=h_1$. Both first entries are regular, so
\[
 u\mathfrak a=u\mathfrak z_\p(h'_1)
              =\mathfrak z_\p(h_1)=\mathfrak a.
\]
Hence $u\in N_K(\mathfrak a)$. Conversely, every element of
this normalizer gives a point of the same fibre by changing
$(k,\mathbf h)$ to $(ku^{-1},u\mathbf h)$.
Every fibre therefore has dimension $\dim N_K(\mathfrak a)$,
which equals $\dim Z_K(\mathfrak a)$ because the little Weyl
group is finite.

For $\mathfrak l=\mathfrak z_\g(\mathfrak a)$, we have
$c_{\mathfrak l}=\ell$ and $\delta_{\mathfrak l}=0$.
Moreover, the Lie algebras of $N_K(\mathfrak l)$ and
$Z_K(\mathfrak a)$ both equal
$\kalg\cap\mathfrak z_\g(\mathfrak a)$.
By \cref{lem:plevi-orbit-dim},
\[
 \dim K-\dim Z_K(\mathfrak a)=\dim\p-\ell.
\]
The fibre-dimension theorem now gives
\[
 \dim\overline{K\cdot\mathfrak a^r}
 =\dim K-\dim Z_K(\mathfrak a)+r\ell
 =\dim\p+(r-1)\ell.
\]
\end{proof}

At \(r=2\), the term \(\lfloor\tfrac{(r-2)\delta_{\mathfrak l}}{2}\rfloor\) in
\eqref{eq:plevi-family-bound} vanishes, and the upper bound matches the
dimension of the Cartan component. Note that the argument still relies on
\cref{thm:universal}, specialized to the form
\[
  \dim C_2(\N(\p_{\mathfrak l})) \le \delta_{\mathfrak l}.
\]
The dimension formula and the fact that \(\overline{K\cdot\mathfrak a^2}\) is an
irreducible component of maximal dimension were established by
Panyushev~\cite[\S3.5 and Proposition~3.7]{Panyushev94}. The uniqueness of this
maximal-dimensional component is also recorded in
\cite[Corollary~2.3]{SabourinYu}; see also \cite{BuloisIrregular}. We include the short deduction below because
the same strategy, applied for general \(r\), leads to our main theorem in the
Hermitian setting (\cref{thm:full-hermitian}).

\begin{corollary}
\label{cor:r2-general}
For every complex semisimple symmetric pair,
\[
  \dim C_2(\p) = \dim\p + \ell = \dim\p + \rk(\g,\kalg),
\]
and \(\overline{K\cdot\mathfrak a^{2}}\) is the unique irreducible component of
\(C_2(\p)\) of maximal dimension.
\end{corollary}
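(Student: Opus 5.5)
We combine the covering \eqref{eq:plevi-cover} with the family bound of \cref{prop:plevi-family-bound} at $r=2$, and use \cref{lem:cartan-component} for the lower bound. For $r=2$ the floor term vanishes, so every $\p$-Levi subalgebra $\mathfrak l$ satisfies
\[
 \dim X_{\mathfrak l}\le\dim\p+c_{\mathfrak l}.
\]
By \cref{lem:plevi-roots}, after $K$-conjugation we may take $\mathfrak l=\mathfrak l_I$, and then $c_{\mathfrak l}=\ell-|I|\le\ell$. Hence $\dim C_2(\p)\le\dim\p+\ell$. \Cref{lem:cartan-component} with $r=2$ supplies the component $\overline{K\cdot\mathfrak a^2}$ of exactly this dimension, which proves the dimension formula.

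For uniqueness, let $Z$ be an irreducible component of $C_2(\p)$ with $\dim Z=\dim\p+\ell$. Since \eqref{eq:plevi-cover} is a finite union of closed sets, $Z\subseteq X_{\mathfrak l}$ for some representative $\mathfrak l=\mathfrak l_I$. The bound then forces $\ell-|I|\ge\ell$, so $I=\varnothing$ and $\mathfrak l=\mathfrak l_\varnothing=\mathfrak z_\g(\mathfrak a)$. For this Levi subalgebra we have $\mathfrak c_{\mathfrak l}=\mathfrak a$ and $\delta_{\mathfrak l}=0$, so $\p_{\mathfrak l}=0$ and $C_2(\N(\p_{\mathfrak l}))$ is a point. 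The image of $\Phi_{\mathfrak l}$ is therefore $K\cdot\mathfrak a^2$, and $X_{\mathfrak l}=\overline{K\cdot\mathfrak a^2}$, which is irreducible. Thus $Z\subseteq\overline{K\cdot\mathfrak a^2}$. Since $Z$ is a component, $Z=\overline{K\cdot\mathfrak a^2}$.

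The main point to verify is that the containment $Z\subseteq X_{\mathfrak l}$ is legitimate. This requires that each $X_{\mathfrak l}$ be closed in $C_2(\p)$, which holds by definition as a closure inside the closed set $C_2(\p)$, and that the union is finite, which \cref{lem:plevi-roots} guarantees. The rest is bookkeeping. The only use of \cref{thm:universal} here is through the $r=2$ case of \cref{prop:plevi-family-bound}, namely Bulois's bound $\dim C_2(\N(\p_{\mathfrak l}))\le\delta_{\mathfrak l}$. For $\mathfrak l$ with $I\ne\varnothing$ the strict inequality $\dim X_{\mathfrak l}\le\dim\p+\ell-|I|<\dim\p+\ell$ is what excludes these families from maximal dimension.
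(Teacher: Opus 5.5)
Your proof is correct and follows the paper's argument essentially step for step. The $r=2$ case of the $\p$-Levi family bound gives $\dim X_{\mathfrak l}\le\dim\p+c_{\mathfrak l}\le\dim\p+\ell$, and the Cartan component supplies the matching lower bound. Uniqueness follows because a maximal-dimensional component must lie in a family with $c_{\mathfrak l}=\ell$, that is, $I=\varnothing$ and $\mathfrak l=\mathfrak z_\g(\mathfrak a)$, whose family closure is $\overline{K\cdot\mathfrak a^2}$. You also check explicitly that $\p_{\mathfrak l}=0$ for this $\mathfrak l$, so that the image of $\Phi_{\mathfrak l}$ is exactly $K\cdot\mathfrak a^2$; the paper states this step without comment.
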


\begin{proof}
For $r=2$, \cref{prop:plevi-family-bound} gives
\[
 \dim X_{\mathfrak l}\le\dim\p+c_{\mathfrak l}
 \le\dim\p+\ell,
\]
for every
\(\p\)-Levi subalgebra \(\mathfrak l\). Together with the finite cover \eqref{eq:plevi-cover} and
\cref{lem:cartan-component}, this proves the dimension formula.
Every maximal-dimensional component lies in some
$X_{\mathfrak l}$ of dimension $\dim\p+\ell$, so
$c_{\mathfrak l}=\ell$.
By \cref{lem:plevi-roots}, this forces
$\mathfrak l$ to be $K$-conjugate to
$\mathfrak z_\g(\mathfrak a)$, for which
$X_{\mathfrak l}=\overline{K\cdot\mathfrak a^2}$.
This proves uniqueness.
\end{proof}

\begin{remark} 
For the group-type symmetric pair \((\g_0\oplus\g_0,\Delta\g_0)\) of
\cref{ex:groupcase}, \cref{cor:r2-general} gives
$\dim C_2(\g_0)=\dim\g_0+\rk\g_0$, in agreement with
Richardson's theorem \cite{Richardson}. Richardson proves the
stronger assertion that $C_2(\g_0)$ is irreducible. Uniqueness
of a maximal-dimensional component alone does not imply irreducibility.

For general symmetric pairs, \(C_2(\p)\) is frequently reducible
\cite{Panyushev04,PanyushevYakimova}, so the uniqueness statement
concerns only the maximal-dimensional component; components of strictly
smaller dimension can and do occur. For instance, for
\((\mathfrak{sl}_{n+m}, \mathfrak{s}(\mathfrak{gl}_n\oplus\mathfrak{gl}_m))\)
with \(n\ne m\), the number of irreducible components can be arbitrarily large
\cite[Prop.~4.4]{PanyushevYakimova}.
\end{remark}

More generally, combining \cref{prop:plevi-family-bound} and \cref{lem:cartan-component}
supplies explicit two-sided bounds on \(\dim\Cr(\p)\) for any symmetric pair,
governed purely by restricted-root data.

\begin{corollary}\label{cor:general-upper}
For every complex semisimple symmetric pair and every \(r\ge2\),
\[
  \max\Bigl\{
    \dim\Cr(\N(\p)),
    \ \dim\p+(r-1)\ell
  \Bigr\}
  \le
  \dim\Cr(\p)
\]
and
\[
  \dim\Cr(\p)
  \le
  \dim\p+\max_{I\subseteq\Delta}
  \left[
    (r-1)(\ell-|I|)
    +
    \left\lfloor
      \frac{(r-2)\delta_{\mathfrak l_I}}{2}
    \right\rfloor
  \right],
\]
where \(\delta_{\mathfrak l_I} = |I| + \sum_{\alpha\in\Sigma_I^{+}}\dim\g_{\alpha}\).
\end{corollary}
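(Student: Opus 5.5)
The lower bound is assembled from results already proved. First, $\Cr(\N(\p))$ is a closed subvariety of $\Cr(\p)$, since $\N(\p)^r\cap\mathcal C_r(\p)$ lies in $\mathcal C_r(\p)$. Hence $\dim\Cr(\N(\p))\le\dim\Cr(\p)$. Second, \cref{lem:cartan-component} shows that $\overline{K\cdot\mathfrak a^r}$ is an irreducible closed subvariety of $\Cr(\p)$ of dimension $\dim\p+(r-1)\ell$. Taking the maximum of these two lower bounds gives the first displayed inequality.

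The upper bound follows from the finite cover \eqref{eq:plevi-cover}: $\dim\Cr(\p)=\max_{\mathfrak l}\dim X_{\mathfrak l}$, where $\mathfrak l$ runs over representatives of the $K$-conjugacy classes of $\p$-Levi subalgebras. By \cref{lem:plevi-roots}, each representative may be taken of the form $\mathfrak l_I$ for some $I\subseteq\Delta$. The family $X_{\mathfrak l}$ depends only on the $K$-conjugacy class of $\mathfrak l$, because conjugating $\mathfrak l$ by $k_0\in K$ only reparametrises $\Phi_{\mathfrak l}$ through left translation by $k_0$. We may therefore index the maximum by subsets $I$, possibly with repetitions, which do not affect a maximum.

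We then apply \cref{prop:plevi-family-bound} to $\mathfrak l_I$ and substitute $c_{\mathfrak l_I}=\ell-|I|$ and the formula for $\delta_{\mathfrak l_I}$ from \eqref{eq:c-delta}. This yields exactly the bracketed expression, and maximising over $I$ gives the second inequality.

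No real obstacle is expected. The only point needing care is the conjugation invariance of $X_{\mathfrak l}$, together with the fact that the integer constants $c$ and $\delta$ are $K$-conjugation invariant. The latter holds because $k\cdot\mathfrak l$ has centre $k\cdot\mathfrak z(\mathfrak l)$ and derived algebra $k\cdot[\mathfrak l,\mathfrak l]$, and $K$ preserves $\p$.
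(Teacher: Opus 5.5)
Your proposal is correct and follows the paper's proof essentially step for step. The lower bounds come from the closed inclusion and the Cartan component. The upper bound comes from the finite cover, reduction to the $\mathfrak l_I$, and the family bound with $c_{\mathfrak l_I}=\ell-|I|$. Your remark that $X_{\mathfrak l}$, $c_{\mathfrak l}$, and $\delta_{\mathfrak l}$ are invariant under $K$-conjugation makes explicit a point the paper leaves implicit.
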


\begin{proof}
For the lower bound, \(\Cr(\N(\p))\) is a closed subvariety of \(\Cr(\p)\), so
\(\dim\Cr(\p) \ge \dim\Cr(\N(\p))\). Furthermore, \cref{lem:cartan-component}
shows that \(\overline{K\cdot\mathfrak a^r}\subseteq\Cr(\p)\) has dimension
\(\dim\p + (r-1)\ell\), which gives the second lower bound.

For the upper bound, the finite union in \eqref{eq:plevi-cover} gives
\(\dim\Cr(\p) = \max_{\mathfrak l}\dim X_{\mathfrak l}\). By
\cref{lem:plevi-roots}, every \(\p\)-Levi subalgebra is \(K\)-conjugate to
\(\mathfrak l_I\) for some subset \(I\subseteq\Delta\), with
\(c_{\mathfrak l_I} = \ell - |I|\) and \(\delta_{\mathfrak l_I}\) given by
\eqref{eq:c-delta}. Applying \cref{prop:plevi-family-bound} to each \(\mathfrak l_I\)
yields the asserted maximum.
\end{proof}

\subsection{The Hermitian codimension estimate}\label{subsec:codim-estimate}

From now on, \((\g,\kalg)\) is a simple Hermitian symmetric pair, and we put
\[
  d = \dim\pp = \frac12\dim\p,
  \qquad
  \ell = \rk(\g,\kalg).
\]
After interchanging the two blocks in type \(\mathrm{AIII}\) if necessary, we
assume \(1\le m\le n\).

The restricted root system \(\Sigma\) for all these symmetric pairs is of type \(C_\ell\) or \(BC_\ell\);
see \cite[Chapter~X, \S6]{Helgason}. Choose a coordinate basis
$e_1,\ldots,e_\ell$ of the real span of the restricted roots.
The positive roots may be normalized as
\begin{equation}\label{eq:hermitian-roots}
  e_i\pm e_j \,\, (1\le i<j\le\ell),
  \,\, 
  2e_i \,\, (1\le i\le\ell), \,\, 
  e_i \,\, (1\le i\le\ell,\ \text{present precisely when } b_{\Sigma}>0),
\end{equation}
with multiplicities
\begin{equation}\label{eq:hermitian-multiplicities}
  \dim\g_{e_i\pm e_j} = a_{\Sigma},
  \qquad
  \dim\g_{2e_i} = 1,
  \qquad
  \dim\g_{e_i} = 2b_{\Sigma},
\end{equation}
where $a_\Sigma\in\Z_{\ge1}$ and $b_\Sigma\in\Z_{\ge0}$.
The last equality is used only when $b_\Sigma>0$.
When $\ell=1$, no roots $e_i\pm e_j$ occur; $a_\Sigma$ is then
the formal value in the table below and does not affect any dimension.
Thus \(\Sigma\) is of type
\(C_\ell\) when \(b_{\Sigma}=0\) and of type \(BC_\ell\) when \(b_{\Sigma}>0\).
For \(\mathrm{BDI}_{\mathrm{Herm}}\), we adopt the \(C_2\) normalization of the
\(B_2\) root system. The characteristic multiplicities are summarized in
\eqref{eq:ab-table}:
\begin{equation}\label{eq:ab-table}
\begin{array}{@{}lccc@{}}
\toprule
\text{type} & \ell & a_{\Sigma} & b_{\Sigma} \\
\midrule
\mathrm{AIII}(m,n),\ 1\le m\le n & m & 2 & n-m \\
\mathrm{BDI}_{\mathrm{Herm}}(n),\ n\ge3 & 2 & n-2 & 0 \\
\mathrm{CI}(n) & n & 1 & 0 \\
\mathrm{DIII}(2\ell) & \ell & 4 & 0 \\
\mathrm{DIII}(2\ell+1) & \ell & 4 & 2 \\
\mathrm{EIII} & 2 & 6 & 4 \\
\mathrm{EVII} & 3 & 8 & 0 \\
\bottomrule
\end{array}
\end{equation}
Counting positive restricted roots with their multiplicities in
\eqref{eq:dimp-roots} yields
\begin{equation}\label{eq:d-ab}
  \dim\p
  = 2d
  = 2\ell + a_{\Sigma}\ell(\ell-1) + 2b_{\Sigma}\ell,
  \qquad
  d
  = \ell + \frac{a_{\Sigma}}2\ell(\ell-1) + b_{\Sigma}\ell.
\end{equation}
These formulas reproduce the values of \(\dim\pp\) recorded in
\cref{subsec:hermitian-classification}. Note that \(a_{\Sigma}\ge1\) in every case.

For \(t\ge0\), define the polynomial functions
\begin{equation}\label{eq:AH}
  A(t) = t + \frac{a_{\Sigma}}2t(t+1),
  \qquad
  H(t) = 2t + a_{\Sigma}t(t-1) + 2b_{\Sigma}t.
\end{equation}
Under this notation, \(\dim\p = H(\ell)\).

The next estimate is the key input for the Hermitian case: it shows that
an intermediate $\p$-Levi subalgebra has large codimension compared with
its central part.
\begin{lemma}\label{lem:hermitian-plevi-codim}
Let \(\mathfrak l\) be a \(\p\)-Levi subalgebra of a simple Hermitian
symmetric pair, and write \(c=c_{\mathfrak l}\), \(\delta=\delta_{\mathfrak l}\).
Then
\[
  \delta=0 \iff c=\ell,
  \qquad\text{and}\qquad
  \dim\p-\delta \ge 4c \quad\text{whenever } 0<c<\ell.
\]
Moreover, equality $\dim\p-\delta=4c$ with $0<c<\ell$
can occur only for the pair of type $\mathrm{CI}(2)$,
equivalently $\mathrm{BDI}_{\mathrm{Herm}}(3)$.
\end{lemma}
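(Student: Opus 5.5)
The plan is to reduce everything to restricted-root bookkeeping via \cref{lem:plevi-roots}. After $K$-conjugation we may take $\mathfrak l=\mathfrak l_I$ for some $I\subseteq\Delta$ with $s=|I|$. By \eqref{eq:c-delta} we then have $c=\ell-s$ and $\delta=s+\sum_{\alpha\in\Sigma_I^+}\dim\g_\alpha$.

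The first assertion is immediate from these formulas. Since every summand is nonnegative, $\delta\ge s$, so $\delta=0$ forces $s=0$, i.e.\ $c=\ell$. Conversely $s=0$ gives $\Sigma_I=\varnothing$ and hence $\delta=0$.

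For the inequality, combine \eqref{eq:dimp-roots} with \eqref{eq:c-delta}:
\[
 \dim\p-\delta=(\ell-s)+\sum_{\alpha\in\Sigma^+\setminus\Sigma_I^+}\dim\g_\alpha
 =c+\sum_{\alpha\in\Sigma^+\setminus\Sigma_I^+}\dim\g_\alpha .
\]
So I need a lower bound for the weighted number of positive roots outside a standard parabolic subsystem of rank $s=\ell-c$. Equivalently, I need an upper bound for $\sum_{\alpha\in\Sigma_I^+}\dim\g_\alpha$ with $|I|=s$.

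I will use the standard simple roots $e_1-e_2,\dots,e_{\ell-1}-e_\ell$ together with $2e_\ell$ (type $C_\ell$) or $e_\ell$ (type $BC_\ell$). Then $\Sigma_I$ is a product of type-$A$ pieces and at most one piece of type $C_j$ or $BC_j$. The type-$A$ pieces live on disjoint blocks of $k_1,\dots,k_p$ consecutive coordinates and contain the roots $e_i-e_j$ within each block. The $C_j$/$BC_j$ piece contains all roots supported on the last $j$ coordinates. Here $j+\sum_i(k_i-1)=s$.

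The claim I aim for is that the weighted count is maximised by $I$ equal to the last $s$ simple roots, giving $\Sigma_I\cong C_s$ or $BC_s$. In that case $\delta=H(s)$ with $H$ as in \eqref{eq:AH}. To prove this, I compare the two shapes directly. A type-$A$ block on $k$ coordinates contributes $a_\Sigma\binom k2$ to the weighted count. If instead those $k-1$ simple roots are absorbed into the $C$-piece, the $C$-piece gains at least $a_\Sigma$ per new pair of coordinates $e_i\pm e_j$, and $1$ (plus $2b_\Sigma$) per new coordinate through $2e_i$ (and $e_i$). So merging never decreases the count, and an induction on the number of type-$A$ blocks should give $\delta\le H(s)$. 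Making this comparison clean — including the boundary conventions for small $\ell$ and for the $\mathrm{BDI}_{\mathrm{Herm}}$ normalization — is the step I expect to be the main obstacle. I would first try to verify it as a direct inequality $a_\Sigma\binom k2\le H(j+k-1)-H(j)-(k-1)$.

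Granting the claim, we get $\dim\p-\delta\ge H(\ell)-H(\ell-c)$, and expanding gives
\[
 H(\ell)-H(\ell-c)=2c+a_\Sigma c(2\ell-c-1)+2b_\Sigma c .
\]
Hence $\dim\p-\delta-4c\ge c\bigl(a_\Sigma(2\ell-c-1)+2b_\Sigma-2\bigr)$. When $0<c<\ell$ we have $2\ell-c-1\ge\ell\ge c+1\ge2$, and since $a_\Sigma\ge1$ the bracket is nonnegative. Equality therefore forces $a_\Sigma=1$, $b_\Sigma=0$ and $2\ell-c-1=2$, which means $\ell=2$ and $c=1$. By \eqref{eq:ab-table} this leaves only $\mathrm{CI}(2)$, and $\mathrm{BDI}_{\mathrm{Herm}}(3)\simeq\mathrm{CI}(2)$ is the same pair. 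Finally, the case $\ell=1$ is vacuous, since there is no $c$ with $0<c<1$.
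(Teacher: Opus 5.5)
Your reduction to $\mathfrak l_I$, the first equivalence, and the computation of $H(\ell)-H(\ell-c)$ with its equality analysis are all correct. The gap is the key intermediate claim. The weighted count $\sum_{\alpha\in\Sigma_I^+}\dim\g_\alpha$ over $|I|=s$ is \emph{not} in general maximised by the last $s$ simple roots, so $\delta\le H(s)$ is false.

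A concrete counterexample is $\mathrm{AIII}(2,2)$, where $\ell=2$, $a_\Sigma=2$, $b_\Sigma=0$ and $\dim\p=8$. For $I=\{e_1-e_2\}$ one has $c=1$ and $\delta=1+a_\Sigma=3>H(1)=2$. Hence $\dim\p-\delta=5$, whereas your intermediate bound asserts $\dim\p-\delta\ge H(2)-H(1)=6$. The lemma itself survives, since $5\ge4$.

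The same failure occurs at $s=1$ for every pair with $b_\Sigma=0$, $a_\Sigma\ge2$ and $\ell\ge2$, for instance $\mathrm{DIII}(2\ell)$ or $\mathrm{EVII}$. There a short type-$A$ block through the heavy roots $e_i-e_j$ outweighs a $C$-block built from the multiplicity-one roots $2e_i$.

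Your proposed comparison $a_\Sigma\binom k2\le H(j+k-1)-H(j)-(k-1)$ is valid for $j\ge1$: the difference equals $\tfrac{k-1}{2}\bigl(a_\Sigma(k+4j-4)+4b_\Sigma+2\bigr)\ge0$. It fails for $j=0$, where at $k=2$ it reads $a_\Sigma\le1+2b_\Sigma$. So your induction has no valid starting point when $I$ avoids the special node.

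The missing case needs its own bound. Put $A(t)=t+\tfrac{a_\Sigma}{2}t(t+1)$, the contribution of a type-$A_t$ block. When $I$ does not contain the special node, merge the type-$A$ blocks with one another using $A(x+y)-A(x)-A(y)=a_\Sigma xy\ge0$; this gives $\delta\le A(s)$. When $I$ does contain the special node, your merging gives $\delta\le H(s)$. So only $\delta\le\max\{A(s),H(s)\}$ holds, and you must also prove $H(\ell)-A(s)\ge4c$.

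That inequality follows from $H(s+1)-A(s)-4=\tfrac12\bigl(a_\Sigma s(s+1)+4b_\Sigma(s+1)+2s-4\bigr)\ge0$ for $s\ge1$, together with the remaining $c-1$ increments $H(t+1)-H(t)=2(a_\Sigma t+b_\Sigma+1)\ge4$.

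The equality analysis must be run in this branch as well. Vanishing of the first term forces $s=1$, $a_\Sigma=1$ and $b_\Sigma=0$. Any further increment then exceeds $4$, so $c=1$, which again gives $\mathrm{CI}(2)$. This two-branch argument is exactly how the paper proceeds.
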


\begin{proof}
By \cref{lem:plevi-roots}, after \(K\)-conjugation we may assume that
\(\mathfrak l = \mathfrak l_I\) for some subset of simple roots \(I\subseteq\Delta\).
Set \(s=|I|\). By \eqref{eq:c-delta},
\begin{equation}\label{eq:codim-c-delta}
  c = \ell-s,
  \qquad
  \delta = s + \sum_{\alpha\in\Sigma_I^+}\dim\g_\alpha.
\end{equation}
Since every term in the sum is strictly positive, \(\delta=0\) if and only if
\(s=0\), which is equivalent to \(c=\ell\). This establishes the first
equivalence.

We now assume that \(0<c<\ell\), which is equivalent to \(1\le s\le\ell-1\) and
\(c=\ell-s\). The simple restricted roots of \(\Sigma\) are ordered as
\[
  \alpha_1 = e_1-e_2,\ \ldots,\ \alpha_{\ell-1} = e_{\ell-1}-e_\ell,
\]
together with the special end root \(\alpha_\ell\), which is \(2e_\ell\) in
type \(C_\ell\) and \(e_\ell\) in type \(BC_\ell\).

Any subset \(I\subseteq\Delta\) decomposes into connected components of the
restricted Dynkin diagram. Each connected component of \(I\) that does not
contain the special node \(\alpha_\ell\) is a path of type \(A_t\) for some
\(t\ge1\). Its contribution to the right-hand side of \eqref{eq:codim-c-delta}
is precisely
\[
  t + a_\Sigma\binom{t+1}{2} = t + \frac{a_\Sigma}{2}t(t+1) = A(t).
\]
If a connected component of \(I\) contains the special node \(\alpha_\ell\) and
has rank \(u\ge1\), it is of type \(C_u\) or \(BC_u\), and its contribution to
\eqref{eq:codim-c-delta} is
\[
  u + \Bigl(u + a_\Sigma u(u-1) + 2b_\Sigma u\Bigr) = 2u + a_\Sigma u(u-1) + 2b_\Sigma u = H(u).
\]
Thus, if the components not containing \(\alpha_\ell\) have ranks
\(t_1,\ldots,t_v\), and if \(u\) denotes the rank of the component containing
\(\alpha_\ell\) (with \(u=0\) when no such component exists), then
\[
  s = u + t_1 + \cdots + t_v,
\]
and \(\delta\) decomposes as
\begin{equation}\label{eq:codim-block-decomposition}
  \delta =
  \begin{cases}
    \displaystyle\sum_{j=1}^v A(t_j), & \text{if } u=0,\\[3mm]
    \displaystyle H(u) + \sum_{j=1}^v A(t_j), & \text{if } u>0.
  \end{cases}
\end{equation}

The following two elementary identities allow us to merge these blocks:
\begin{align}
  A(x+y) - A(x) - A(y)
  &= a_\Sigma xy \ge 0,
  \label{eq:codim-A-merge}\\
  H(u+t) - H(u) - A(t)
  &= \frac{t}{2}\bigl(a_\Sigma(t+4u-3) + 4b_\Sigma + 2\bigr) \ge 0,
  \label{eq:codim-H-merge}
\end{align}
valid for all \(x,y\ge0\) and \(u,t\ge1\), since \(a_\Sigma\ge1\) and
\(b_\Sigma\ge0\).

If \(u=0\), repeated application of \eqref{eq:codim-A-merge} yields
\(\delta \le A(s)\). If \(u>0\), repeated application of \eqref{eq:codim-H-merge}
yields \(\delta \le H(s)\). Consequently,
\begin{equation}\label{eq:codim-delta-max}
  \delta \le \max\{A(s), H(s)\}.
\end{equation}

It remains to compare \(A(s)\) and \(H(s)\) with \(H(\ell) = \dim\p\). For
every \(t\ge1\),
\begin{equation}\label{eq:codim-H-increment}
  H(t+1) - H(t) = 2(a_\Sigma t + b_\Sigma + 1) \ge 2(1\cdot 1 + 0 + 1) = 4.
\end{equation}
Since \(\ell = s+c\), telescoping across the \(c\) steps gives
\begin{equation}\label{eq:codim-H-bound}
  H(\ell) - H(s) = \sum_{t=s}^{\ell-1}\bigl(H(t+1) - H(t)\bigr) \ge 4c.
\end{equation}

For the term \(A(s)\), a direct calculation gives
\begin{equation}\label{eq:codim-A-first-step}
  H(s+1) - A(s) - 4
  = \frac12\bigl(a_\Sigma s(s+1) + 4b_\Sigma(s+1) + 2s - 4\bigr).
\end{equation}
For \(s=1\), the parenthetical expression evaluates to
\(2a_\Sigma + 8b_\Sigma - 2 \ge 2(1) - 2 = 0\). For \(s\ge2\), we have
\(2s-4\ge0\) and \(a_\Sigma s(s+1) > 0\). Hence \(H(s+1) - A(s) \ge 4\) for all
\(s\ge1\). Combining this with \eqref{eq:codim-H-increment} for the remaining
\(c-1\) increments yields
\begin{equation}\label{eq:codim-A-bound}
  H(\ell) - A(s)
  = \bigl(H(s+1) - A(s)\bigr) + \sum_{t=s+1}^{\ell-1}\bigl(H(t+1) - H(t)\bigr)
  \ge 4 + 4(c-1) = 4c.
\end{equation}

Combining \eqref{eq:codim-delta-max}, \eqref{eq:codim-H-bound}, and
\eqref{eq:codim-A-bound}, we conclude that
\[
  \dim\p - \delta
  = H(\ell) - \delta
  \ge H(\ell) - \max\{A(s), H(s)\}
  \ge 4c.
\]
For the equality assertion, equality in
$H(\ell)-H(s)\ge4c$ forces every increment
$2(a_\Sigma t+b_\Sigma+1)$, for $s\le t\le\ell-1$,
to equal $4$. Thus $s=c=1$, $a_\Sigma=1$, and $b_\Sigma=0$.
Equality in $H(\ell)-A(s)\ge4c$ first requires
\[
 a_\Sigma s(s+1)+4b_\Sigma(s+1)+2s-4=0.
\]
This again forces $s=1$, $a_\Sigma=1$, and $b_\Sigma=0$;
any further increment is strictly larger than $4$, so $c=1$.
Since $\delta\le\max\{A(s),H(s)\}$, these are the only
possible parameters for $\dim\p-\delta=4c$.
The restricted-root table identifies the pair as $\mathrm{CI}(2)$.
This completes the proof.
\end{proof}

\subsection{The exact dimension formula}
\label{subsec:full-dimension}

The Hermitian three-grading \(\g=\pmns\oplus\kalg\oplus\pp\) provides two closed linear
subvarieties of commuting nilpotent tuples, \((\pmns)^r\) and \((\pp)^r\).
A Cartan subspace \(\mathfrak a\subseteq\p\) provides a third distinguished
closed irreducible subvariety, the Cartan orbit closure
\(\overline{K\cdot\mathfrak a^r}\). The dimensions of these families are
\[
  \dim (\pp)^r = \dim (\pmns)^r = rd,
  \qquad
  \dim \overline{K\cdot\mathfrak a^r} = 2d+(r-1)\ell,
\]
where
\[
  d = \dim\pp = \dim\pmns = \frac12\dim\p,
  \qquad
  \ell = \rk(\g,\kalg).
\]
The next theorem proves that no other \(\p\)-Levi family can exceed these
dimensions, determining the exact dimension of \(\Cr(\p)\) for all \(r\ge2\).

\begin{theorem}\label{thm:full-hermitian}
Let \(\g=\pmns\oplus\kalg\oplus\pp\) be a nonzero complex simple Hermitian
symmetric pair, and set \(d=\dim\pp\) and \(\ell=\rk(\g,\kalg)\). Then, for every
\(r\ge2\),
\[
  \dim\Cr(\p)
  =
  \max\bigl\{
    rd,\,
    2d+(r-1)\ell
  \bigr\}.
\]
Equivalently,
\[
  \dim\Cr(\p)
  =
  \max\bigl\{
    \dim (\pp)^r,\,
    \dim \overline{K\cdot\mathfrak a^r}
  \bigr\}.
\]
\end{theorem}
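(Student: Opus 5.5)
The lower bound is immediate: $(\pp)^r\subseteq\Cr(\N(\p))\subseteq\Cr(\p)$ has dimension $rd$, and \cref{lem:cartan-component} gives $\dim\overline{K\cdot\mathfrak a^r}=2d+(r-1)\ell$. For the upper bound I will use the finite cover \eqref{eq:plevi-cover}, so it suffices to bound $\dim X_{\mathfrak l}$ for each $\p$-Levi subalgebra $\mathfrak l$, with $c=c_{\mathfrak l}$ and $\delta=\delta_{\mathfrak l}$. The starting point is \cref{prop:plevi-family-bound}:
\[
 \dim X_{\mathfrak l}\le 2d+(r-1)c+\tfrac{r-2}{2}\delta .
\]
I split into three cases according to $c$, using \cref{lem:hermitian-plevi-codim}.

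If $c=\ell$, then $\delta=0$, and the bound is $2d+(r-1)\ell$. If $c=0$, then $\mathfrak l$ is $K$-conjugate to $\mathfrak l_\Delta=\g$, because $c=\ell-|I|$. Thus $\mathfrak c_{\mathfrak l}=0$, and $X_{\mathfrak l}$ is contained in $\Cr(\N(\p))$, which has dimension $rd$ by \cref{thm:hermitian}. Directly from the formula, $\delta=2d$ gives $2d+(r-2)d=rd$ as well.

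The main case is $0<c<\ell$. Here \cref{lem:hermitian-plevi-codim} gives $\delta\le 2d-4c$. Substituting,
\[
 \dim X_{\mathfrak l}\le 2d+(r-1)c+\tfrac{r-2}{2}(2d-4c)=rd-(r-3)c .
\]
For $r\ge3$ this is at most $rd$. For $r=2$ the family bound is already $2d+c<2d+\ell$, which is the content of \cref{cor:r2-general}. So in every case $\dim X_{\mathfrak l}\le\max\{rd,2d+(r-1)\ell\}$, and taking the maximum over the cover gives the claimed formula.

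The main obstacle is getting the intermediate estimate to land exactly under the maximum. The coefficient of $c$ after substitution is $(r-1)-2(r-2)=3-r$, so the argument works precisely because the codimension constant in \cref{lem:hermitian-plevi-codim} is $4$. I will recheck this arithmetic carefully, including the floor in \eqref{eq:plevi-family-bound} for odd $r$, although the floor only helps. The equivalent formulation in the statement then follows from the dimension values already computed for $(\pp)^r$ and $\overline{K\cdot\mathfrak a^r}$.
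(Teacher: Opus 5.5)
Your proposal is correct and follows the paper's proof essentially line for line. The lower bound comes from $(\pp)^r$ and \cref{lem:cartan-component}, and the upper bound splits the finite $\p$-Levi cover into the cases $c=\ell$, $c=0$ and $0<c<\ell$, where \cref{lem:hermitian-plevi-codim} yields $\dim X_{\mathfrak l}\le rd-(r-3)c$. The only difference is that the paper disposes of $r=2$ up front via \cref{cor:r2-general}, whereas you fold it into the same case analysis, which is harmless.
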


\begin{proof}
For \(r=2\), since \(\dim\p=2d\) and \(\ell\ge1\), we have
\[
  \max\{2d,\, 2d+\ell\} = 2d+\ell = \dim\p+\ell.
\]
The formula therefore follows from \cref{cor:r2-general}. We may therefore assume
\(r\ge3\).

We first establish the upper bound. Let \(\mathfrak l\) be an arbitrary
\(\p\)-Levi subalgebra, and set \(c=c_{\mathfrak l}\) and
\(\delta=\delta_{\mathfrak l}\). Then \(0\le c\le\ell\) and \(\dim\p=2d\).
By \cref{prop:plevi-family-bound}, the family closure \(X_{\mathfrak l}\) satisfies
\begin{equation}\label{eq:main-family-bound}
  \dim X_{\mathfrak l}
  \le
  2d+(r-1)c+\frac{r-2}{2}\delta.
\end{equation}
We analyze three cases based on the value of \(c\):
\begin{enumerate}[label=\textup{(\alph*)},leftmargin=*]
\item \textbf{Case \(c=\ell\).} By \cref{lem:hermitian-plevi-codim},
\(\delta=0\). Substituting into \eqref{eq:main-family-bound} gives
\[
  \dim X_{\mathfrak l} \le 2d+(r-1)\ell.
\]
\item \textbf{Case \(c=0\).} By \cref{lem:plevi-roots}, \(\mathfrak l=\g\).
Consequently, \(\delta=\dim\p=2d\) and \(X_{\mathfrak l}=\Cr(\N(\p))\).
Then \eqref{eq:main-family-bound} yields
\[
  \dim X_{\mathfrak l} \le 2d+\frac{r-2}{2}(2d) = rd.
\]
(In fact, equality holds by \cref{thm:hermitian}.)
\item \textbf{Case \(0<c<\ell\).} By \cref{lem:hermitian-plevi-codim}, the
codimension satisfies
\[
  2d-\delta = \dim\p-\delta \ge 4c.
\]
Rewriting \eqref{eq:main-family-bound} by subtracting \(rd\), we obtain
\begin{align*}
  \dim X_{\mathfrak l} - rd
  &\le
  (2-r)d + (r-1)c + \frac{r-2}{2}\delta \\
  &=
  (r-1)c - \frac{r-2}{2}(2d-\delta) \\
  &\le
  (r-1)c - \frac{r-2}{2}(4c) \\
  &=
  (r-1)c - 2(r-2)c \\
  &=
  (3-r)c.
\end{align*}
Since \(r\ge3\) and \(c>0\), we have \((3-r)c \le 0\). Thus,
\(\dim X_{\mathfrak l} \le rd\) for every intermediate \(\p\)-Levi subalgebra.
\end{enumerate}

Taking the maximum over the finite closed cover
\(\Cr(\p) = \bigcup_{\mathfrak l} X_{\mathfrak l}\) from \eqref{eq:plevi-cover},
we conclude that
\[
  \dim\Cr(\p)
  \le
  \max\bigl\{
    rd,\,
    2d+(r-1)\ell
  \bigr\}.
\]

For the matching lower bound, \((\pp)^r\) is a closed irreducible subvariety
of \(\Cr(\p)\), so \(\dim\Cr(\p)\ge\dim(\pp)^r = rd\). Furthermore,
\cref{lem:cartan-component} shows that \(\overline{K\cdot\mathfrak a^r}\) is a
closed irreducible subvariety of \(\Cr(\p)\) of dimension
\(\dim\p+(r-1)\ell = 2d+(r-1)\ell\). Hence
\[
  \dim\Cr(\p) \ge \max\bigl\{rd,\, 2d+(r-1)\ell\bigr\},
\]
which proves the equality.
\end{proof}
For the classification of components, we need a strict form of the
preceding estimate for the intermediate families.
\begin{lemma}\label{lem:intermediate-levi}
Let $(\g,\kalg)$ be a nonzero simple Hermitian symmetric pair.
If $r\ge2$ and $\mathfrak l$ is a $\p$-Levi subalgebra with
$0<c_{\mathfrak l}<\ell$, then
\[
 \dim X_{\mathfrak l}<\dim\Cr(\p).
\]
\end{lemma}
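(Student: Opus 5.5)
We split according to whether $r=2$, $r\ge4$, or $r=3$, and in every case compare the bound of \cref{prop:plevi-family-bound} with the value of $\dim\Cr(\p)$ from \cref{thm:full-hermitian} (or \cref{cor:r2-general} when $r=2$). Throughout we write $c=c_{\mathfrak l}$ and $\delta=\delta_{\mathfrak l}$, and we assume $0<c<\ell$.

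\emph{Case $r=2$.} \cref{prop:plevi-family-bound} gives
\[
\dim X_{\mathfrak l}\le \dim\p+c<\dim\p+\ell .
\]
By \cref{cor:r2-general}, the right-hand side is $\dim C_2(\p)$, so the claim holds.

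\emph{Case $r\ge3$.} We reuse the computation in case (c) of the proof of \cref{thm:full-hermitian}:
\[
\dim X_{\mathfrak l}-rd\le (r-1)c-\frac{r-2}{2}\,(2d-\delta).
\]
By \cref{lem:hermitian-plevi-codim} we have $2d-\delta\ge 4c$, so the right-hand side is at most $(3-r)c$.
\begin{itemize}
\item If $r\ge4$, then $(3-r)c<0$ because $c>0$. Hence $\dim X_{\mathfrak l}<rd\le\dim\Cr(\p)$.
\item If $r=3$, we use the equality clause of \cref{lem:hermitian-plevi-codim}.
\begin{itemize}
\item If $2d-\delta\ge 4c+1$, the displayed bound gives $\dim X_{\mathfrak l}-3d\le -\tfrac12$. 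Since dimensions are integers, $\dim X_{\mathfrak l}<3d$.
\item Otherwise $2d-\delta=4c$. The lemma then forces the pair to be $\mathrm{CI}(2)$, with $s=c=1$. For this pair $\ell=2$ and $d=3$, and $\delta=2$, since both $A(1)$ and $H(1)$ equal $2$ when $a_\Sigma=1$ and $b_\Sigma=0$. The bound of \cref{prop:plevi-family-bound} gives $\dim X_{\mathfrak l}\le 6+2+1=9$. On the other hand, \cref{thm:full-hermitian} gives $\dim C_3(\p)=\max\{9,\,6+4\}=10$.
\end{itemize}
\end{itemize}

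The expected main obstacle is exactly this exceptional $r=3$, $\mathrm{CI}(2)$ situation. There the comparison with the nilpotent family $(\pp)^r$ is only an equality, so strictness must come from the Cartan family dimension $2d+(r-1)\ell$ instead. Since that comparison involves a single explicit pair, a direct numerical check as above suffices.
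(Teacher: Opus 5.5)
Your proof is correct and follows the paper's argument essentially step for step: you use the same split into $r=2$, $r\ge4$, and $r=3$, and you handle the $\mathrm{CI}(2)$ exception at $r=3$ the same way, by comparing the bound $9$ with $\dim C_3(\p)=10$. The only cosmetic difference is that you split the $r=3$ case by whether $2d-\delta>4c$ rather than by whether the pair is $\mathrm{CI}(2)$; the equality clause of the codimension lemma makes these the same split.
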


\begin{proof}
Write $c=c_{\mathfrak l}$ and $\delta=\delta_{\mathfrak l}$.
For $r=2$, the family bound gives
\[
 \dim X_{\mathfrak l}\le2d+c<2d+\ell=\dim C_2(\p).
\]
For $r\ge4$, the proof of \cref{thm:full-hermitian} gives
\[
 \dim X_{\mathfrak l}-rd\le(3-r)c<0,
\]
whereas $\dim\Cr(\p)\ge rd$.

It remains to consider $r=3$. If the pair is not of type
$\mathrm{CI}(2)$, \cref{lem:hermitian-plevi-codim} gives
$2d-\delta>4c$. Hence
\[
 \dim X_{\mathfrak l}-3d
 \le2c-\frac12(2d-\delta)<0.
\]
For $\mathrm{CI}(2)$, we have $d=3$ and $\ell=2$.
The non-strict estimate gives $\dim X_{\mathfrak l}\le9$,
while \cref{thm:full-hermitian} gives $\dim C_3(\p)=10$.
This proves the strict inequality in all cases.
\end{proof}

\subsubsection{The transition threshold}

The dimensions of the Cartan family \(\overline{K\cdot\mathfrak a^r}\) and the
linear nilpotent families \((\pp)^r\) and \((\pmns)^r\) are
compared by a numerical threshold. Define the ratio
\begin{equation}\label{eq:threshold}
  \rho
  :=
  \frac d\ell
  =
  1+\frac{a_\Sigma}{2}(\ell-1)+b_\Sigma,
\end{equation}
and set
\[
  \tau
  :=
  \begin{cases}
    \displaystyle
    2+\frac{1}{\rho-1},
      & \text{if } \rho>1,\\[3mm]
    +\infty,
      & \text{if } \rho=1.
  \end{cases}
\]

\begin{corollary}\label{cor:threshold}
Let \((\g,\kalg)\) be a nonzero complex simple Hermitian symmetric pair.
For every integer \(r\ge2\),
\[
\begin{aligned}
  rd < 2d+(r-1)\ell &\iff r < \tau,\\
  rd = 2d+(r-1)\ell &\iff r = \tau,\\
  rd > 2d+(r-1)\ell &\iff r > \tau.
\end{aligned}
\]
Consequently,
\[
  \dim\Cr(\p)
  =
  \begin{cases}
    \dim\overline{K\cdot\mathfrak a^r} = 2d+(r-1)\ell, & \text{if } r<\tau,\\[2mm]
    \dim(\pp)^r = rd, & \text{if } r>\tau,
  \end{cases}
\]
and the two dimensions coincide precisely when \(\tau\) is an integer and
\(r=\tau\).

Up to isomorphism, the simple Hermitian symmetric pairs partition into the
following three cases:
\begin{enumerate}[label=\textup{(\roman*)}]
\item \(\rho=1\) occurs uniquely for \((\mathfrak{sl}_2,\mathfrak{so}_2)\); in
this case \(\tau=+\infty\), and the Cartan family has strictly larger dimension
than the linear nilpotent families for every \(r\ge2\).

\item \(\rho=\tfrac32\) occurs uniquely for
\((\mathfrak{sp}_4,\mathfrak{gl}_2) \cong (\mathfrak{so}_5, \mathfrak{so}_3\oplus\mathfrak{so}_2)\);
in this case \(\tau=4\), so the Cartan family strictly dominates for \(r\in\{2,3\}\),
an exact dimensional tie occurs at \(r=4\), and the nilpotent families strictly
dominate for all \(r\ge5\).

\item Every other simple Hermitian symmetric pair satisfies
\[
  \rho\ge2,
  \qquad
  \tau\le3.
\]
In particular, the linear nilpotent families strictly dominate for all \(r\ge4\),
and they dominate or tie with the Cartan family at \(r=3\).
\end{enumerate}
\end{corollary}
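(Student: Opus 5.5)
The argument is elementary algebra followed by a table lookup. Since $d=\rho\ell$ with $\ell\ge1$, dividing by $\ell$ turns the comparison of $rd$ with $2d+(r-1)\ell$ into a comparison of $r\rho$ with $2\rho+r-1$. This is equivalent to comparing $(r-2)(\rho-1)$ with $1$.

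First suppose $\rho=1$. The left side is $0<1$ for every $r$, so the Cartan family always has strictly larger dimension, and this matches $\tau=+\infty$.

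Now suppose $\rho>1$. We have $(r-2)(\rho-1)<1$ if and only if $r<2+1/(\rho-1)=\tau$, and the same holds with equality and with the reverse inequality. This gives the three equivalences. The dimension statement then follows directly from \cref{thm:full-hermitian}, which identifies $\dim\Cr(\p)$ as the larger of the two quantities. Equality between the two quantities requires $r=\tau$, so it forces $\tau$ to be an integer.

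For the trichotomy I would use \eqref{eq:threshold}, $\rho=1+\frac{a_\Sigma}{2}(\ell-1)+b_\Sigma$, together with table \eqref{eq:ab-table}.

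\textbf{Case $\rho=1$.} This happens if and only if $\ell=1$ and $b_\Sigma=0$, because $a_\Sigma\ge1$ makes the middle term positive whenever $\ell\ge2$. From the table, $\ell=1$ occurs for $\mathrm{AIII}(1,n)$, with $b_\Sigma=n-1$, and for $\mathrm{CI}(1)$. The condition $b_\Sigma=0$ forces $\mathrm{AIII}(1,1)\cong\mathrm{CI}(1)$, which is $(\mathfrak{sl}_2,\mathfrak{so}_2)$. The case $\mathrm{DIII}(3)$ with $\ell=1$ is excluded because its $b_\Sigma$ equals $2$.

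\textbf{Case $1<\rho<2$.} If $\ell=1$, then $\rho=1+b_\Sigma$ is an integer, so $\rho\ge2$. If $\ell\ge2$, then $\rho\ge1+\frac{a_\Sigma}{2}+b_\Sigma$. For this to be below $2$ we need $a_\Sigma=1$, $\ell=2$ and $b_\Sigma=0$, which gives $\rho=3/2$ and $\tau=4$. Here $a_\Sigma=1$ with $b_\Sigma=0$ occurs only for $\mathrm{CI}(n)$ and for $\mathrm{BDI}_{\mathrm{Herm}}(3)$, and $\ell=2$ then picks out $\mathrm{CI}(2)\cong\mathrm{BDI}_{\mathrm{Herm}}(3)$.

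\textbf{Remaining pairs.} All other pairs have $\rho\ge2$, so $\tau\le3$. The equivalences then show that the nilpotent families strictly dominate for $r\ge4>\tau$ and dominate or tie at $r=3$.

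The only care needed is in this table-driven case split. One must check that the low-rank coincidences $\mathrm{DIII}(3)\simeq\mathrm{AIII}(1,3)$ and $\mathrm{BDI}_{\mathrm{Herm}}(3)\simeq\mathrm{CI}(2)$ give consistent values of $\rho$, so that "uniquely up to isomorphism" is justified. One must also confirm that the two exceptional pairs do not arise from some other row with fractional $a_\Sigma(\ell-1)/2$. For example, $\mathrm{BDI}_{\mathrm{Herm}}(n)$ has $\rho=n/2\ge2$ once $n\ge4$.
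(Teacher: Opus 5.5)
Your proposal is correct and follows essentially the same route as the paper. The paper writes the difference as $\ell\bigl((r-2)(\rho-1)-1\bigr)$, which is your division by $\ell$, and then makes the same case split on $\rho-1=\tfrac12 a_\Sigma(\ell-1)+b_\Sigma$, using integrality of $b_\Sigma$ in rank one and the table \eqref{eq:ab-table}; your consistency checks on the low-rank coincidences are a harmless addition that the paper leaves implicit.
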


\begin{proof}
The difference between the two dimensions is
\[
 rd-\bigl(2d+(r-1)\ell\bigr)
 =\ell\bigl((r-2)(\rho-1)-1\bigr).
\]
For $\rho>1$, this has the sign of $r-\tau$.
For $\rho=1$, it equals $-\ell<0$.
This proves the comparisons.

For the classification, use
$\rho-1=\tfrac12 a_\Sigma(\ell-1)+b_\Sigma$
and \eqref{eq:ab-table}.
Since $a_\Sigma\ge1$ and $b_\Sigma$ is a nonnegative integer,
$\rho=1$ forces $\ell=1$ and $b_\Sigma=0$, giving
$(\mathfrak{sl}_2,\mathfrak{so}_2)$.
If $0<\rho-1<1$, then $\ell\ne1$ because the rank-one entries
have integral $b_\Sigma$; hence
$(\ell,a_\Sigma,b_\Sigma)=(2,1,0)$.
The table identifies this case with $\mathrm{CI}(2)$, for which
$\tau=4$. Every other entry has $\rho\ge2$ and $\tau\le3$.
\end{proof}

\subsubsection{Maximal-dimensional components} The threshold also determines which of the three distinguished families
are components of maximal dimension.

\begin{corollary}[Maximal-dimensional full components]
\label{cor:full-components}
Under the hypotheses of \cref{thm:full-hermitian}, put
\[
 L_r=rd,\qquad S_r=2d+(r-1)\ell,\qquad
 \mathcal T_r=\overline{K\cdot\mathfrak a^r},
\]
where $\mathfrak a$ is a Cartan subspace of $\p$.
The maximal-dimensional irreducible components of $\Cr(\p)$
are precisely:
\begin{enumerate}[label=\textup{(\roman*)}]
\item $(\pp)^r$ and $(\pmns)^r$ if $L_r>S_r$;
\item $\mathcal T_r$ if $L_r<S_r$;
\item $(\pp)^r$, $(\pmns)^r$, and $\mathcal T_r$ if $L_r=S_r$.
\end{enumerate}
Equality occurs exactly in the following cases, up to isomorphism:
\[
\begin{array}{ll}
r=3:&
\mathrm{AIII}(1,2),\quad
\mathrm{AIII}(2,2)\cong\mathrm{BDI}_{\mathrm{Herm}}(4),\quad
\mathrm{CI}(3),\\[1mm]
r=4:&\mathrm{CI}(2).
\end{array}
\]
\end{corollary}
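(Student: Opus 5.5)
The proof splits into three tasks. First, show that every maximal-dimensional component is one of the three listed varieties. Second, check that each listed variety is a maximal-dimensional component exactly when its dimension equals the maximum. Third, solve $L_r=S_r$ over the classification.

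For the first task, let $Z$ be an irreducible component of $\Cr(\p)$ with $\dim Z=\dim\Cr(\p)=\max\{L_r,S_r\}$. The cover \eqref{eq:plevi-cover} is a finite union of closed irreducible-by-family sets $X_{\mathfrak l}$, so $Z\subseteq X_{\mathfrak l}$ for some $\p$-Levi representative $\mathfrak l$ with $\dim X_{\mathfrak l}\ge\dim Z$. \Cref{lem:intermediate-levi} rules out $0<c_{\mathfrak l}<\ell$, so only two cases remain.

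If $c_{\mathfrak l}=0$, then $\mathfrak l=\g$ and $X_{\mathfrak l}=\Cr(\N(\p))$, which has dimension $rd$. Hence $Z$ is a maximal-dimensional component of $\Cr(\N(\p))$ and $L_r\ge S_r$. For $r\ge3$, \cref{thm:hermitian-max-components}(ii) gives $Z\in\{(\pp)^r,(\pmns)^r\}$. For $r=2$ we have $L_2=2d<2d+\ell=S_2$, so this case cannot occur.

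If $c_{\mathfrak l}=\ell$, then $\delta_{\mathfrak l}=0$ by \cref{lem:hermitian-plevi-codim}, and $\mathfrak l$ is conjugate to $\mathfrak z_\g(\mathfrak a)$. Then $\p_{\mathfrak l}=0$, so $X_{\mathfrak l}=\overline{K\cdot\mathfrak a^r}=\mathcal T_r$. This variety is irreducible, so $Z=\mathcal T_r$ and $S_r\ge L_r$.

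For the second task: when $L_r\ge S_r$, the subvarieties $(\pp)^r$ and $(\pmns)^r$ are closed, irreducible, and of dimension $rd=\dim\Cr(\p)$ by \cref{thm:full-hermitian}. A closed irreducible subvariety of maximal dimension is a component, so both are components. When $S_r\ge L_r$, \cref{lem:cartan-component} shows that $\mathcal T_r$ is a component, and its dimension $S_r$ is maximal. All listed varieties are distinct. The linear ones differ because $\pp\cap\pmns=0$. The variety $\mathcal T_r$ contains nonnilpotent tuples, so it differs from both. Together with the first task this gives (i)–(iii).

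For the third task, the identity in the proof of \cref{cor:threshold} shows that equality $L_r=S_r$ holds iff $(r-2)(\rho-1)=1$ with $r\ge3$, where $\rho-1=\tfrac12a_\Sigma(\ell-1)+b_\Sigma$. So we need $\rho-1=1$ with $r=3$, or $\rho-1=\tfrac12$ with $r=4$; the value $\rho-1=1/k$ for $k\ge3$ is impossible, since $\rho-1\in\tfrac12\Z$. Checking \eqref{eq:ab-table}:
\begin{itemize}
\item $\rho-1=\tfrac12$ forces $(\ell,a_\Sigma,b_\Sigma)=(2,1,0)$, which is $\mathrm{CI}(2)$, giving $r=4$.
\item $\rho-1=1$ in $\mathrm{AIII}(m,n)$: $\ell=1$ gives $b_\Sigma=n-1=1$, so $\mathrm{AIII}(1,2)$; $\ell=2$ gives $a_\Sigma=2$ and $b_\Sigma=0$, so $\mathrm{AIII}(2,2)$.
\item $\rho-1=1$ in $\mathrm{CI}(n)$: $\tfrac12(n-1)=1$ gives $n=3$.
\item $\rho-1=1$ in $\mathrm{BDI}_{\mathrm{Herm}}(n)$: $(n-2)/2=1$ gives $n=4$, which is isomorphic to $\mathrm{AIII}(2,2)$.
\item No $\mathrm{DIII}$ or exceptional entry has $\rho-1=1$, because for them $\rho-1\ge2$.
\end{itemize}

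The only step I expect to need care is the containment argument in the first task. The sets $X_{\mathfrak l}$ need not be irreducible, but this does no harm: $Z$ lies in some $X_{\mathfrak l}$ of dimension at least $\dim Z$, and in each surviving case the relevant variety is either irreducible or already classified.
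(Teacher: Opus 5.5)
Your proposal is correct and follows essentially the paper's proof. You use the finite $\p$-Levi cover, discard the intermediate families with \cref{lem:intermediate-levi}, identify the $c=0$ and $c=\ell$ families with $\Cr(\N(\p))$ and $\mathcal T_r$, apply \cref{thm:hermitian-max-components} in the nilpotent case, and use half-integrality of $\rho-1$ to get $r\in\{3,4\}$. The only differences are cosmetic: you treat $r=2$ inside the same argument (via $L_2<S_2$) rather than quoting \cref{cor:r2-general}, and you carry out the table substitution explicitly, which is correct.
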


\begin{proof}
The two linear varieties have dimension $L_r$, and
$\mathcal T_r$ has dimension $S_r$ by
\cref{lem:cartan-component}. They are pairwise distinct:
$\pp\cap\pmns=0$, while $\mathcal T_r$ contains tuples
with a nonzero semisimple entry.

For $r=2$, the unique maximal-dimensional component is
$\mathcal T_2$ by \cref{cor:r2-general}.
Let $r\ge3$. By \cref{lem:intermediate-levi}, the intermediate
$\p$-Levi families have dimension strictly less than
$\dim\Cr(\p)$. The family with $c=\ell$ is $\mathcal T_r$,
and the family with $c=0$ is $\Cr(\N(\p))$.
Any maximal-dimensional irreducible component of $\Cr(\p)$
is contained in one member of the finite closed cover
\eqref{eq:plevi-cover}. The intermediate families have just
been excluded. If it is contained in the nilpotent family,
it must have dimension $L_r$ and hence must be one of the two
linear components, by \cref{thm:hermitian-max-components}.
If it is contained in $\mathcal T_r$, irreducibility forces
it to equal $\mathcal T_r$.
Conversely, each listed subvariety whose dimension equals
$\max\{L_r,S_r\}$ is an irreducible component.
This proves the three alternatives.

Finally, equality is impossible for $r=2$. For $r\ge3$,
$L_r=S_r$ is equivalent to
$d/\ell-1=1/(r-2)$. By \eqref{eq:d-ab}, $d/\ell-1=\tfrac{a_\Sigma}{2}(\ell-1)+b_\Sigma$ lies in
$\tfrac12\Z_{\ge0}$, so $1/(r-2)\in\{1,\tfrac12\}$ and $r\in\{3,4\}$. Substitution from \eqref{eq:ab-table}
gives the displayed list.
\end{proof}

\subsubsection{Products and explicit formulas} Finally, we extend the formula to semisimple Hermitian pairs and make it
explicit for each simple type.

\begin{corollary}\label{cor:full-products}
Let $r\ge2$, and let
\[
  (\g,\kalg) = \bigoplus_{\nu=1}^t (\g_\nu,\kalg_\nu)
\]
be a direct sum of simple Hermitian symmetric pairs. Put
\[
  d_\nu = \dim\pp_\nu,
  \qquad
  \ell_\nu = \rk(\g_\nu,\kalg_\nu).
\]
Then
\[
  \dim\Cr(\p)
  =
  \sum_{\nu=1}^t
  \max\bigl\{
    rd_\nu,\,
    2d_\nu+(r-1)\ell_\nu
  \bigr\}.
\]
\end{corollary}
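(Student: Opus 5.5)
The plan is to combine the product decomposition of \cref{lem:products} with the simple-type formula of \cref{thm:full-hermitian}. The statement allows factors that are "simple Hermitian symmetric pairs," so I will first dispose of factors with zero $(-1)$-eigenspace. For such a factor $\p_\nu=0$, so $C_r(\p_\nu)$ is a point. Moreover $d_\nu=0$, and the rank of such a pair is $\ell_\nu=0$, since a Cartan subspace lies in $\p_\nu=0$. The corresponding summand $\max\{0,0\}=0$ therefore contributes correctly. The remaining factors are nonzero simple Hermitian pairs, to which \cref{thm:full-hermitian} applies directly.

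Next I will induct on $t$ using the last assertion of \cref{lem:products}. Write $\p=\p^{(1)}\oplus\p^{(2)}$ with $\p^{(1)}=\p_1$ and $\p^{(2)}=\bigoplus_{\nu\ge2}\p_\nu$. That lemma gives an isomorphism
\[
C_r(\p)\cong C_r(\p^{(1)})\times C_r(\p^{(2)})
\]
of reduced varieties. Dimension of a product of varieties over $\C$ is additive, which is recorded in the lemma. So
\[
\dim C_r(\p)=\dim C_r(\p_1)+\dim C_r(\p^{(2)}).
\]
Iterating over the $t$ factors and inserting $\dim C_r(\p_\nu)=\max\{rd_\nu,2d_\nu+(r-1)\ell_\nu\}$ from \cref{thm:full-hermitian} yields the formula.

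There is no real obstacle. The only point needing care is that the maxima do not combine: the sum of maxima generally exceeds the maximum over global families. This is exactly what additivity over the product gives, because mixed products such as $(\pp_1)^r\times\overline{K_2\cdot\mathfrak a_2^r}$ are closed irreducible subvarieties realising the sum. For completeness I could remark that each summand is attained by such a product of maximal components in each factor. This is not needed for the equality, since dimension additivity already provides both inequalities.
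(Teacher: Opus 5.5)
Your proposal is correct and follows the paper's proof: the product decomposition $C_r(\p)\cong\prod_\nu C_r(\p_\nu)$ from \cref{lem:products}, additivity of dimension, and \cref{thm:full-hermitian} applied to each factor. Your separate treatment of factors with $\p_\nu=0$, each contributing $\max\{0,0\}=0$, is a small piece of care that the paper leaves implicit, since \cref{thm:full-hermitian} is stated only for nonzero pairs.
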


\begin{proof}
By \cref{lem:products}, the higher commuting variety respects direct sums:
\[
  \Cr(\p) \cong \prod_{\nu=1}^t\Cr(\p_\nu).
\]
The formula follows immediately from \cref{thm:full-hermitian} and the
additivity of Krull dimension under Cartesian products of complex algebraic
varieties.
\end{proof}

\begin{remark}\label{rem:full-explicit}
Substituting the explicit invariants \(d\) and \(\ell\) from the classification
into \cref{thm:full-hermitian} yields the following dimensions for each simple
Hermitian type:
\[
\begin{array}{@{}lll@{}}
\toprule
\text{type} & \text{ambient algebra } \g & \dim\Cr(\p) \\
\midrule
\mathrm{AIII}(m,n),\ 1\le m\le n & \mathfrak{sl}_{m+n} & \max\{rmn,\ 2mn+(r-1)m\} \\[1.5mm]
\mathrm{BDI}_{\mathrm{Herm}}(n),\ n\ge3 & \mathfrak{so}_{n+2} & \max\{rn,\ 2n+2(r-1)\} \\[1.5mm]
\mathrm{CI}(n),\ n\ge1 & \mathfrak{sp}_{2n} & \displaystyle\max\left\{ \frac{rn(n+1)}2,\, n(n+1)+(r-1)n \right\} \\[3mm]
\mathrm{DIII}(n),\ n\ge3 & \mathfrak{so}_{2n} & \displaystyle\max\left\{ \frac{rn(n-1)}2,\, n(n-1)+(r-1)\left\lfloor\frac n2\right\rfloor \right\} \\[3mm]
\mathrm{EIII} & \mathfrak{e}_6 & \max\{16r,\ 32+2(r-1)\} \\[1.5mm]
\mathrm{EVII} & \mathfrak{e}_7 & \max\{27r,\ 54+3(r-1)\} \\
\bottomrule
\end{array}
\]

For \(r=3\), the linear nilpotent families have dimension greater than or
equal to the Cartan family in every case except
\[
  (\mathfrak{sl}_2,\mathfrak{so}_2) \quad\text{and}\quad \mathrm{CI}(2).
\]
These dimensions coincide at \(r=3\) precisely for the three cases listed in
\cref{cor:full-components}(iii). For \(\mathrm{CI}(2)\),
\[
  \dim C_3(\p) = 10 > 9 = 3d;
\]
here, the two types of families tie at \(r=4\) (dimension \(12\)), and the
linear nilpotent families strictly dominate for all \(r\ge5\). For the
rank-one pair \((\mathfrak{sl}_2,\mathfrak{so}_2)\), the Cartan family strictly
dominates for every \(r\ge2\), giving \(\dim\Cr(\p) = r+1\).
\end{remark}

\begin{remark}\label{rem:nilpotent-vs-full}
Comparing \cref{thm:full-hermitian} with \cref{thm:hermitian}, we obtain
\[
  \dim\Cr(\p) = \dim\Cr(\N(\p)) = rd
\]
for every \(r\ge3\), with precisely the following exceptions:
\[
  (\mathfrak{sl}_2,\mathfrak{so}_2) \quad\text{for all } r\ge3,
  \qquad\text{and}\qquad
  \mathrm{CI}(2) \quad\text{at } r=3.
\]
Thus, with these two low-rank exceptions, removing the nilpotency condition
does not increase the dimension of the higher commuting variety once \(r\ge3\).
For pairs (\(r=2\)), by contrast, the semisimple contribution always strictly
dominates:
\[
  \dim C_2(\p) - \dim C_2(\N(\p)) = \ell = \rk(\g,\kalg).
\]
\end{remark}

\subsection{Fields of validity}\label{subsec:fields}
The characteristic-zero results are not restricted to the complex
numbers. The span construction has a still wider range of validity.

\begin{proposition}\label{prop:charzero} The dimension, irreducible-component, and isotropy statements
formulated over $\C$ in this paper hold over every algebraically
closed field of characteristic zero.
\end{proposition}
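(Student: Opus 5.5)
Over an arbitrary algebraically closed field $k$ of characteristic zero, I would not redo the arguments. I would instead check, one input at a time, that each ingredient used over $\C$ is algebraic and holds over $k$. Then I would transfer the remaining results by a base-change argument. The plan has two layers.

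\textbf{First layer: the structure theory over $k$.} Semisimple Lie algebras, their involutions, and the classification of symmetric pairs are identical over any algebraically closed field of characteristic zero. Every such datum is defined over $\overline{\Q}$, and base change from $\overline{\Q}$ to $k$ or to $\C$ preserves it. The Kostant--Rallis theory holds verbatim over $k$: conjugacy of Cartan subspaces, finiteness of nilpotent $K$-orbits, \eqref{eq:KR}, and the restricted root facts \ref{R1}--\ref{R3}. The same is true of Jordan decomposition and Premet/Bulois's result \cref{prop:r2-nilpotent}.

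One point needs care. The proof of \cref{lem:plevi-roots} uses a real form $\mathfrak a_{\R}$. There I would replace $\R$ by the $\Q$-span of the coroots, or argue directly that a generic element of the subspace cut out by the vanishing roots has the same centraliser.

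With these inputs in place, all the proofs in \cref{sec:span,sec:universal,sec:hermitian,sec:full} are purely algebraic:
\begin{itemize}
\item transcendence degree counts in \cref{lem:pair-projection};
\item Kähler differentials in \cref{prop:universal-isotropy};
\item fibre dimension and finite covers elsewhere.
\end{itemize}
So they go through unchanged over $k$.

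\textbf{Second layer: base change, as an alternative and a cross-check.} All the varieties involved are defined over $\overline{\Q}$: $\Cr(\p)$, $\Cr(\N(\p))$, $\Ab_d(\p)$, $(\pp)^r$, and $\overline{K\cdot\mathfrak a^r}$. For an extension $F\subseteq F'$ of algebraically closed fields, the following are all preserved:
\begin{itemize}
\item dimension;
\item reducedness of a reduced finite-type scheme, since characteristic zero is perfect;
\item irreducibility, and the bijection of irreducible components, since $F$ is algebraically closed;
\item smooth loci;
\item vanishing of a pulled-back $2$-form on the smooth locus.
\end{itemize}
Given any $k$, I would embed $\overline{\Q}$ into both $k$ and $\C$. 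The statements over $\C$ then descend to $\overline{\Q}$, because components and dimensions are compatible with base change. They then ascend to $k$.

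\textbf{Main obstacle.} The step needing most care is that the reduced structures commute with base change: $(X_{\overline{\Q}})_{\mathrm{red}}\otimes k=(X_k)_{\mathrm{red}}$. This holds because reducedness is geometric in characteristic zero. A second delicate point is that the specific named components, $(\pp)^r$ and $\mathcal T_r$, are themselves defined over $\overline{\Q}$. This is clear since the grading element and a Cartan subspace can be chosen over $\overline{\Q}$.
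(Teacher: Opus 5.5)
Your proposal is correct, and its second layer is essentially the paper's proof. The paper descends all the data (the pair, its grading, the schemes, morphisms and differential forms) to a subfield $F$ of the given field that is finitely generated over $\Q$. It then embeds $\overline F$ into both that field and $\C$, and transfers dimensions, geometric components, smooth loci and the vanishing of forms across these extensions of algebraically closed fields.

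The one difference is that you route through $\overline{\Q}$ rather than $\overline F$. That costs you two inputs the paper never needs.
\begin{enumerate}
\item Every symmetric pair over $\kk$ must be isomorphic to the base change of one defined over $\overline{\Q}$. This is true, by rigidity or by the classification of involutions, but it is a theorem rather than a formality.
\item Named subvarieties such as $\overline{K\cdot\mathfrak a^r}$ must not depend on the choice of Cartan subspace over $\kk$. This is true by $K$-conjugacy.
\end{enumerate}
Working with a finitely generated field of definition of the actual given data, including any chosen Cartan subspace, avoids both points. Such an $\overline F$ has finite transcendence degree, so it embeds in $\C$ automatically.

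Your first layer, re-running the proofs over $\kk$, is a heavier alternative and is not really independent. Inputs such as \cref{prop:r2-nilpotent} and the Kostant--Rallis results are proved over $\C$. The quickest justification that they hold over $\kk$ is exactly the transfer argument of your second layer. So the second layer alone constitutes the proof, and the first can be dropped.
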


\begin{proof} For each statement, the symmetric pair, its grading when present,
and the relevant finite-type schemes, morphisms, and differential
forms descend to a subfield $F$ of the given field that is
finitely generated over $\Q$. 
We choose embeddings of an algebraic closure $\overline F$
into the given field and into $\C$.
Dimensions and geometric irreducible components are preserved by
extension of algebraically closed fields. Smooth loci and the
vanishing of the algebraic differential forms are compatible
with these extensions as well. The assertions therefore descend
from $\C$ to $\overline F$ and extend to the
given field.
\end{proof}

The span stratification and eventual dimension argument apply over
any algebraically closed field whenever the chosen nilpotence
condition is preserved by linear combinations of commuting elements.
This includes nilpotent matrix tuples. It also includes commuting $p$-nilpotent elements in a restricted Lie
algebra over a field of characteristic $p>0$, with $p$-operation
$x\mapsto x^{[p]}$: for commuting elements, \[
 \left(\sum_i a_i x_i\right)^{[p]}
 =\sum_i a_i^p x_i^{[p]}.
\]
Thus the restricted nullcone contains their linear span whenever
all $x_i^{[p]}$ vanish. 
One must distinguish the restricted nullcone
\[
 \N_1(\g)=\{x\in\g:x^{[p]}=0\}
\]
from the full nilpotent cone unless their equality has been established.

\begin{proposition}\label{prop:large-p}
Fix a simple Hermitian root datum and its three-grading.
There is an integer $N>0$, depending on this datum but independent
of $r$, such that for every $r\ge2$ and every algebraically closed
field $\kk$ of characteristic zero or characteristic $p\nmid N$,
\[
 \dim\Cr\bigl(\N(\p_\kk)\bigr)=r\dim\pp_\kk.
\]
\end{proposition}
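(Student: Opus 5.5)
Lower bound: $(\pp_\kk)^r\subseteq\Cr(\N(\p_\kk))$ over any field, once $\pp_\kk$ is an abelian subspace of nilpotent elements. Upper bound: the pair-projection inequality of \cref{lem:pair-projection}, which holds over every algebraically closed field. Feeding it the bound $\dim C_2(\N(\p_\kk))\le\dim\p_\kk=2\dim\pp_\kk$ gives $\dim\Cr\le rd$ for every $r\ge2$, exactly as in the proof of \cref{thm:universal}. So the only characteristic-dependent inputs are the three-grading facts and the two-variable nilpotent bound. These involve only $r=2$, so the excluded set of primes will automatically be independent of $r$.

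\textbf{Step 1 (integral model).} Choose a Chevalley-type $\Z$-form of $\g$ compatible with the grading: a $\Z[1/N_0]$-lattice $\g_\Z=\g_{-1,\Z}\oplus\g_{0,\Z}\oplus\g_{1,\Z}$ stable under the bracket, with $\theta$ acting as $(-1)^j$ on degree $j$. Then over any $\kk$ with $p\nmid N_0$ the bracket respects the grading, so $[\pp_\kk,\pp_\kk]\subseteq(\g_\kk)_2=0$. Each $\operatorname{ad}x$ with $x\in\pp_\kk$ raises degree, so $(\operatorname{ad}x)^3=0$ and $x$ is nilpotent. Ranks of the free summands give $\dim\pp_\kk=d$ and $\dim\p_\kk=2d$. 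This gives the lower bound $rd$.

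\textbf{Step 2 (two-variable bound, the main obstacle).} We need $\dim C_2(\N(\p_\kk))\le 2d$ for almost all $p$. Over $\Q$ (hence over $\C$) this is \cref{prop:r2-nilpotent}. Consider the finite-type $\Z[1/N_0]$-scheme $\mathcal X\subseteq\p_\Z^2$ cut out by the bracket equations and by the vanishing of the characteristic-polynomial coefficients of $\operatorname{ad}x_1$ and $\operatorname{ad}x_2$. These coefficients define the nilpotent cone set-theoretically in every characteristic, since nilpotence of $\operatorname{ad}x$ is what we mean by $\N$. The generic fibre $\mathcal X_\Q$ has reduced geometric fibre $C_2(\N(\p_{\overline\Q}))$, of dimension $2d$. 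By generic flatness applied to the irreducible components of $\mathcal X$ dominating $\Spec\Z$, and Chevalley's upper semicontinuity of fibre dimension, $\dim\mathcal X_{\overline{\mathbb F}_p}\le 2d$ for all $p$ outside a finite set. Some non-dominant components may lie entirely over finitely many primes; those primes are also excluded. Let $N$ be $N_0$ times all excluded primes.

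I expect the care to lie exactly here: checking that the set-theoretic nilpotent cone over $\overline{\mathbb F}_p$ is the fibre of the chosen integral equations, and invoking the constructibility of fibre dimension (EGA IV, 9.2.6 / 13.1.3) correctly. The key point is that it is applied to one scheme, not an $r$-dependent family.

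\textbf{Step 3 (conclusion).} For an irreducible component $Z$ of $\Cr(\N(\p_\kk))$, each $\overline{\pi_{ij}(Z)}$ lies in $C_2(\N(\p_\kk))$. Here we need a linear combination of commuting nilpotents to be nilpotent; the simultaneous triangularization argument in \cref{lem:symmetries} is characteristic-free. \cref{lem:pair-projection} then gives $(r-1)\dim Z\le\binom r2\,2d$, so $\dim Z\le rd$. Combined with Step 1, this gives equality for all $r\ge2$, with $N$ independent of $r$.
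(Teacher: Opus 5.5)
Your proposal is correct and follows the paper's proof. Both arguments build a graded integral model, show $\dim C_2(\N(\p_\kk))\le 2d$ outside finitely many primes by a single fibre-dimension argument (you use Chevalley semicontinuity and constructibility where the paper cites generic constancy of fibre dimension), and then apply \cref{lem:pair-projection} for the upper bound and $(\pp_\kk)^r$ for the lower bound, so the excluded primes do not depend on $r$. One small remark: Step~3 needs no linear-combination argument, because $\pi_{ij}(Z)$ consists of pairs of entries of the tuple, which are already commuting nilpotent elements.
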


\begin{proof}
We spread out the symmetric pair and its three-grading over
$R_0=\Z[1/N_0]$, inverting enough primes to obtain the required
eigenspace decompositions and a model of the nilpotent cone.
Write $d$ for the rank of the positive grading piece.
There is a finite-type $R_0$-scheme $\mathcal C_2$ whose
geometric fibres have reductions $C_2(\N(\p_\kk))$.
Its geometric generic fibre has dimension $2d$.

By generic constancy of fibre dimension
\cite[Tag~05F7]{Stacks}, after inverting finitely many further
primes every geometric fibre of $\mathcal C_2$ has dimension $2d$.
Fix the resulting ring $\Z[1/N]$.
For any allowed field and any $r\ge2$, every two-coordinate
projection of an irreducible component of
$\Cr(\N(\p_\kk))$ has dimension at most $2d$.
\Cref{lem:pair-projection} therefore gives the upper bound $rd$.
The integral three-grading supplies the abelian nilpotent
subspace $\pp_\kk$, so $(\pp_\kk)^r$ gives the matching lower
bound. The same integer $N$ works for every $r$.
\end{proof}

The integer $N$ is not effective in this argument. In particular,
the proposition does not assert validity in every good
characteristic, nor does it establish reducedness of the
unreduced commuting schemes.

\section{Examples and comparisons with known results}
\label{sec:examples}

We illustrate the main results in three representative situations.
The rank-one Hermitian pair makes the geometry of both the nilpotent
and full commuting varieties completely explicit. The group case
recovers the classical dimension theorem for pairs of commuting
nilpotent elements, while type~$A$ shows how the span stratification
explains the asymptotic formula of Levy--Ngo--\v{S}ivi\'c.

\subsection{Two rank-one examples} In the two smallest examples, both commuting varieties can be described
explicitly.

\begin{example}
\label{ex:sl2herm}
Consider $(\g,\kalg) = (\mathfrak{sl}_2,\mathfrak{so}_2)$,
the Hermitian symmetric pair of type $\mathrm{AIII}(1,1)$, where
$\kalg = \C h \cong \mathfrak{s}(\mathfrak{gl}_1\oplus\mathfrak{gl}_1)$ and
$\p = \C e\oplus\C f$ in terms of the standard $\mathfrak{sl}_2$-triple $(e,h,f)$.
The three-grading is given by
\[
  \pp = \C e, \qquad \pmns = \C f.
\]
For any two elements $x = ae+bf$ and $y = a'e+b'f$ in $\p$, their Lie bracket is
\[
  [x,y] = (ab'-a'b)h.
\]
Thus a tuple $(x_1,\dots,x_r)\in\p^r$, with $x_i = a_i e + b_i f$, commutes if
and only if
\[
  a_i b_j - a_j b_i = 0 \qquad \text{for all } 1\le i<j\le r.
\]
Consequently, the full commuting variety $\Cr(\p)$ is canonically isomorphic to
the determinantal variety of $2\times r$ matrices of rank at most $1$:
\[
  \Cr(\p)
  \cong
  \left\{
    \begin{pmatrix} a_1 & \cdots & a_r \\ b_1 & \cdots & b_r \end{pmatrix}
    \in \operatorname{Mat}_{2\times r}(\C) :
    \operatorname{rk}\begin{pmatrix} a_1 & \cdots & a_r \\ b_1 & \cdots & b_r \end{pmatrix} \le 1
  \right\}.
\]
Because the variety of $2\times r$ matrices of rank at most $1$ has dimension
$2 + r - 1 = r + 1$, we obtain
\[
  \dim\Cr(\p) = r+1,
\]
in exact agreement with \cref{thm:full-hermitian} (since $d=1$ and $\ell=1$,
giving $\max\{r,\, 2+(r-1)\} = r+1$).

On the other hand, an element $ae+bf \in \p$ has determinant $-ab$, so it is
nilpotent if and only if $ab=0$. It follows that a commuting tuple of
nilpotents must have all its entries in $\C e$ or all its entries in $\C f$.
Therefore,
\[
  \Cr(\N(\p)) = (\C e)^r \cup (\C f)^r = (\pp)^r \cup (\pmns)^r.
\]
Thus $\Cr(\N(\p))$ has dimension $r$ and has exactly two irreducible
components. Moreover, the variety of $1$-dimensional abelian nilpotent
subspaces is
\[
  \Ab_1(\p) = \{\pp,\,\pmns\},
\]
providing the smallest instance of \cref{prop:Ab-hermitian} and
\cref{thm:hermitian-max-components}.
\end{example}

\begin{example}
\label{ex:sl2group}
For the group-type symmetric pair
\[
  (\mathfrak{sl}_2\oplus\mathfrak{sl}_2,\, \Delta\mathfrak{sl}_2),
\]
we have $\p\cong\mathfrak{sl}_2$ and hence $\Cr(\N(\p))\cong\Cr(\N(\mathfrak{sl}_2))$.
In $\mathfrak{sl}_2$, every nonzero nilpotent element has a $1$-dimensional
centralizer. Consequently, two nonzero nilpotent elements commute if and only if
they are proportional. Therefore,
\[
  \Cr(\N(\mathfrak{sl}_2))
  =
  \left\{
    (t_1 x, \dots, t_r x) :
    x\in\N(\mathfrak{sl}_2),\ t_1,\dots,t_r\in\C
  \right\}.
\]
Since the projectivized null cone $\mathbb P\N(\mathfrak{sl}_2)$ is isomorphic to
a smooth quadric in $\mathbb P^2$ (hence is isomorphic to $\mathbb P^1$), the
variety of such tuples has dimension
\[
  \dim\Cr(\N(\mathfrak{sl}_2)) = \dim\mathbb P^1 + r = r+1.
\]
On the other hand, the universal estimate of \cref{thm:universal} gives
\[
  \dim\Cr(\N(\mathfrak{sl}_2))
  \le
  \left\lfloor\frac{3r}{2}\right\rfloor.
\]
Comparing the exact dimension $r+1$ with $\lfloor 3r/2 \rfloor$:
\begin{itemize}
\item For $r=2$: $r+1 = 3 = \lfloor 6/2 \rfloor$ (sharp).
\item For $r=3$: $r+1 = 4 = \lfloor 9/2 \rfloor$ (sharp).
\item For $r=4$: $r+1 = 5 < 6 = \lfloor 12/2 \rfloor$ (strict).
\end{itemize}
Thus the universal bound is sharp for $r\in\{2,3\}$ and strictly exceeds the
actual dimension for every $r\ge4$.
\end{example}

\subsection{The group case and Premet's theorem}
\label{subsec:groupcheck}

The group-type symmetric pair shows that the universal estimate specializes
directly to ordinary semisimple Lie algebras.

\begin{corollary}\label{cor:groupcase}
Let $\g_0$ be a complex semisimple Lie algebra. Then, for every $r\ge2$,
\[
  \dim\Cr(\N(\g_0))
  \le
  \left\lfloor
    \frac{r\dim\g_0}{2}
  \right\rfloor.
\]
\end{corollary}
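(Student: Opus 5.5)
The plan is to specialise \cref{thm:universal} to the group-type symmetric pair of \cref{ex:groupcase}. Put $\g=\g_0\oplus\g_0$ and let $\theta$ interchange the two factors. Then $\kalg=\Delta\g_0$ and $\p=\{(x,-x):x\in\g_0\}$. Since $\g_0$ is semisimple, so is $\g$, and $(\g,\kalg)$ is a complex semisimple symmetric pair. \Cref{thm:universal} therefore applies and gives
\[
 \dim\Cr(\N(\p))\le\left\lfloor\frac{r\dim\p}{2}\right\rfloor
\]
for every $r\ge2$.

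Next I will transport this bound to $\g_0$. The linear isomorphism $\iota\colon\g_0\to\p$, $x\mapsto(x,-x)$, satisfies $[\iota x,\iota y]=([x,y],[x,y])$, so it preserves and reflects commutation. Also, $\operatorname{ad}_{\g}(x,-x)=\operatorname{ad}_{\g_0}x\oplus\operatorname{ad}_{\g_0}(-x)$, so $\iota x$ is nilpotent exactly when $x$ is. Hence $\iota^{\times r}$ is a linear isomorphism $\g_0^r\to\p^r$ that restricts to a bijection $\Cr(\N(\g_0))\to\Cr(\N(\p))$. Both varieties are reduced closed subvarieties of affine spaces, and $\iota^{\times r}$ is an isomorphism of the ambient affine spaces, so this bijection is an isomorphism of varieties. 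This is the identification $\Cr(\N(\p))\simeq\Cr(\N(\g_0))$ already recorded in \cref{ex:groupcase}.

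Finally, $\dim\p=\dim\g_0$, and substituting gives the claimed bound. I do not expect a genuine obstacle. The only points to check are that the nilpotent cone of the group-type pair really corresponds to the nilpotent cone of $\g_0$ (the block-diagonal form of $\operatorname{ad}$ settles this) and that \cref{thm:universal} was proved for arbitrary semisimple, not only simple, symmetric pairs; it was.
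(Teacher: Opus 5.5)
Your proposal is correct and follows the paper's proof: apply \cref{thm:universal} to the group-type pair $(\g_0\oplus\g_0,\Delta\g_0)$, then transport the bound using $\Cr(\N(\p))\simeq\Cr(\N(\g_0))$ and $\dim\p=\dim\g_0$ from \cref{ex:groupcase}. Your explicit check that $x\mapsto(x,-x)$ preserves commutation and nilpotence simply spells out what the paper records in that example.
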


\begin{proof}
Apply \cref{thm:universal} to the group-type symmetric pair
$(\g_0\oplus\g_0, \Delta\g_0)$ (\cref{ex:groupcase}), for which
$\p\cong\g_0$ and $\Cr(\N(\p))\cong\Cr(\N(\g_0))$.
\end{proof}

For $r=2$, Premet proved that $C_2(\N(\g_0))$ is equidimensional of dimension
$\dim\g_0$, with irreducible components indexed by the distinguished nilpotent
orbits \cite{Premet}. Thus \cref{cor:groupcase} is sharp for $r=2$.
More generally, \cref{prop:r2-nilpotent} gives
\[
  \dim C_2(\N(\p)) = \dim\p
\]
for every complex semisimple symmetric pair. Using the Kostant--Rallis formula
\eqref{eq:KR}, this may equivalently be written as
\[
  \dim C_2(\N(\p)) = \dim\N(\p) + \rk(\g,\kalg).
\]

\subsection{The span formula in type \texorpdfstring{$A$}{A}}
\label{subsec:asymptotic}

Throughout this subsection, let $n\ge4$ and put
$a_n=\lfloor n^2/4\rfloor$. Here $\N(\mathfrak{gl}_n)$ denotes
the cone of nilpotent matrices. We use $a(\mathfrak{sl}_n)$ and
$\Ab_e(\mathfrak{sl}_n)$ via the group-type identification in
\cref{ex:groupcase}. Since every nilpotent matrix is traceless,
\[
  \Cr(\N(\mathfrak{gl}_n)) = \Cr(\N(\mathfrak{sl}_n)).
\]
Levy, Ngo, and \v{S}ivi\'c proved, in characteristic different
from $2$ and $3$, that \cite[Theorem~4.8]{LevyNgoSivic}
\begin{equation}\label{eq:LNS-again}
  \dim\Cr(\N(\mathfrak{gl}_n)) = (r+1)a_n \qquad (r\ge7).
\end{equation}

The span stratification of \cref{subsec:span} explains the
conceptual structure of this formula. By the Schur--Mal'cev theorem
\cite{Schur,Malcev}, the maximal dimension of an abelian subalgebra of
$\mathfrak{sl}_n$ consisting of nilpotent elements is
\[
  a(\mathfrak{sl}_n) = a_n.
\]
For a linear subspace $W\subseteq\C^n$, define
\[
  \mathfrak m_W
  :=
  \left\{
    x\in\End(\C^n) : x(W)=0,\ x(\C^n)\subseteq W
  \right\}
  \cong
  \Hom(\C^n/W, W).
\]
For $x,y\in\mathfrak m_W$, the composition $xy$ is zero.
Thus $\mathfrak m_W$ is an abelian subalgebra consisting of nilpotent
endomorphisms, of dimension $\dim W(n-\dim W)$. 

Mal'cev's classification of maximal-dimensional abelian nilpotent subalgebras
\cite{Malcev} implies that every such subalgebra is of the form $\mathfrak m_W$
where $\dim W \in \{\lfloor n/2 \rfloor, \lceil n/2 \rceil\}$. Consequently,
the parameter space $\Ab_{a_n}(\mathfrak{sl}_n)$ is isomorphic to
\[
  \Ab_{a_n}(\mathfrak{sl}_n)
  \cong
  \begin{cases}
    \Gr(q, 2q), & \text{if } n=2q, \\[1mm]
    \Gr(q, 2q+1) \sqcup \Gr(q+1, 2q+1), & \text{if } n=2q+1.
  \end{cases}
\]
The inverse to $W\mapsto\mathfrak m_W$ is
$A\mapsto\sum_{x\in A}\operatorname{im}x$.
On each locus of fixed $\dim W$, the evaluation map
$\mathcal S_{a_n}\otimes\C^n\to\C^n\otimes\mathcal O$
has constant rank, so this inverse is a regular Grassmannian map.
The two families in odd dimension are disjoint closed images of
projective Grassmannians. This justifies the displayed isomorphism
of reduced varieties, beyond the set-theoretic classification.

Each connected component is a Grassmannian of dimension
$q(n-q) = a_n$. Therefore,
\[
  \dim\Ab_{a_n}(\mathfrak{sl}_n) = a_n.
\]
Applying our eventual-dimension theorem (\cref{thm:asymptotic}) to
$\p \simeq \mathfrak{sl}_n$, the asymptotic dimension is given by
\[
  \dim\Cr(\N(\mathfrak{sl}_n))
  =
  r\cdot a(\mathfrak{sl}_n) + \dim\Ab_{a(\mathfrak{sl}_n)}(\mathfrak{sl}_n)
  =
  ra_n + a_n
  =
  (r+1)a_n
\]
for all sufficiently large $r$.

Thus \cref{thm:asymptotic} recovers the structural form of \eqref{eq:LNS-again}
conceptually, identifying the slope as the Schur--Mal'cev invariant and the
intercept as the dimension of the parameter space of maximal abelian nilpotent
subspaces. The deeper contribution of Levy, Ngo, and \v{S}ivi\'c is establishing
the effective uniform threshold $r\ge7$ and classifying the
maximal-dimensional components.

By contrast, for a complex Hermitian symmetric pair, the corresponding
invariants are
\[
  a(\p) = \dim\pp = \frac12\dim\p,
  \qquad
  \dim\Ab_{a(\p)}(\p) = 0.
\]
Here, the general asymptotic expression $r\cdot a(\p) + \dim\Ab_{a(\p)}(\p)$
evaluates to $r\dim\pp + 0 = r\dim\pp$. In this case, \cref{thm:hermitian}
proves that this formula is not merely asymptotic, but holds immediately for
every $r\ge2$.

\section{Applications to punctual moduli spaces}
\label{sec:applications}

We apply the preceding results to moduli spaces of finite-length
sheaves and finite subschemes supported at one point. We first explain
these moduli spaces and their descriptions by commuting matrices.
The Hermitian pair of type $\mathrm{AIII}$ then gives dimension
and component results for equivariant Quot schemes. Finally, the
bilinear forms associated with Gorenstein algebras lead to the
symmetric pairs of types $\mathrm{AI}$, $\mathrm{CI}$, and
$\mathrm{BDI}$ and yield dimension bounds for Gorenstein loci.

Throughout this section, we work over $\C$ and put
\[
  R=\C[x_1,\ldots,x_r],
  \qquad
  \mathfrak m=(x_1,\ldots,x_r),
  \qquad r\geq2.
\]
We use the convention $\dim\varnothing=-\infty$. All dimension and
component statements below concern the reduced structures of the
indicated moduli spaces.

\subsection{Punctual Hilbert and Quot schemes}
\label{subsec:punctual-moduli}

For $N\ge1$, the Hilbert scheme of $N$ points, denoted
$\operatorname{Hilb}^N(\mathbb A^r)$, parametrises closed
zero-dimensional subschemes of $\mathbb A^r$ of length $N$.
Its closed points are equivalently ideals $I\subset R$ with
$\dim_\C R/I=N$. More generally, a family over a scheme $T$
is a closed subscheme of $\mathbb A^r\times T$ that is finite
and flat of degree $N$ over $T$.
The Hilbert--Chow morphism sends a finite subscheme to its
support cycle:
\[
 \operatorname{hc}\colon\operatorname{Hilb}^N(\mathbb A^r)
 \longrightarrow\operatorname{Sym}^N(\mathbb A^r),\qquad
 Z\longmapsto
 \sum_{z\in\operatorname{Supp}Z}
             \operatorname{length}(\mathcal O_{Z,z})[z].
\]
Here $\operatorname{Sym}^N(\mathbb A^r)$ is the symmetric product,
whose closed points are effective zero-cycles of degree $N$.
The punctual Hilbert scheme at the origin is the fibre
$\operatorname{hc}^{-1}(N[0])$; we write
$\operatorname{Hilb}^N_0(\mathbb A^r)$ for its reduction.
Its points are exactly the ideals $I$ of colength $N$ with
$\sqrt I=\mathfrak m$. Thus $R/I$ is a local Artinian algebra.

For $L\ge1$ and a finite-dimensional vector space $W$ of dimension $s$, the Quot scheme
$\operatorname{Quot}^L_{\mathbb A^r}
(\mathcal O_{\mathbb A^r}\otimes W)$ parametrises quotients
\[
 q\colon\mathcal O_{\mathbb A^r}\otimes W
       \twoheadrightarrow\mathcal F,
 \qquad \operatorname{length}(\mathcal F)=L.
\]
Two quotients are identified if an isomorphism of their targets
intertwines the quotient maps; equivalently, they have the same
kernel. Families are quotients flat over the parameter scheme,
with support finite over it and fibre length $L$.
The support-cycle morphism is
\[
 \operatorname{cyc}\colon
 \operatorname{Quot}^L_{\mathbb A^r}
 (\mathcal O_{\mathbb A^r}\otimes W)
 \longrightarrow\operatorname{Sym}^L(\mathbb A^r),
 \qquad
 [q]\longmapsto
 \sum_{z\in\operatorname{Supp}\mathcal F}
          \operatorname{length}(\mathcal F_z)[z].
\]
We write
$\operatorname{Quot}^L_0(\mathcal O_{\mathbb A^r}\otimes W)
:=\operatorname{cyc}^{-1}(L[0])_{\mathrm{red}}$
for the reduced punctual Quot scheme at the origin.
When $W=\C$, the kernel is an ideal and this is precisely
$\operatorname{Hilb}^L_0(\mathbb A^r)$.
For larger $W$, the target is an $R$-module and need not be an
algebra.

The universal quotient on a Quot scheme gives a rank-$L$
tautological vector bundle by taking its direct image to the
parameter scheme. Its fibre at $[q]$ is $H^0(\mathcal F)$.
For the Hilbert scheme, this is the tautological algebra bundle
with fibre $R/I$.

We recall the classical surface results to put the applications
in context. Fogarty proved that the Hilbert scheme of $N$ points
on a smooth surface is smooth of dimension $2N$
\cite[Theorem~2.4]{Fogarty}.
Brian\c{c}on proved that
$\operatorname{Hilb}^N_0(\mathbb A^2)$ is irreducible of
dimension $N-1$ \cite{Briancon}.
More generally, for $s,L\ge1$, the ordinary punctual Quot
scheme of a rank-$s$ bundle on a smooth surface is irreducible
of dimension $sL-1$; this is due independently to
Ellingsrud--Lehn and Baranovsky \cite[Theorem~1]{EllingsrudLehn} and 
\cite{BaranovskyQuot}.
The equivariant surface case will be related to Nakajima
varieties in \cref{rem:equivariant-quot-surface}.

The connection with commuting varieties is elementary.
A finite-length sheaf on $\mathbb A^r$ is the same as a
finite-dimensional $R$-module $M=H^0(\mathcal F)$.
After choosing a basis of $M$, multiplication by $x_i$ is an
endomorphism $X_i$; we write $X_\bullet=(X_1,\ldots,X_r)$.
The relations in $R$ say that
$X_iX_j=X_jX_i$, and support at the origin is equivalent
to nilpotence of every $X_i$.
If $\dim M=L$, simultaneous strict triangularisation gives
\[
 X_{i_1}\cdots X_{i_L}=0
 \quad\text{for all }i_1,\ldots,i_L,
 \qquad\text{equivalently}\qquad \mathfrak m^L M=0.
\]
Thus every closed-point module of length $L$ is a module over
$R/\mathfrak m^L$. This also describes the tautological families
on the reduced punctual moduli schemes used below. Indeed,
multiplication by a monomial of degree $L$ is a morphism of
tautological vector bundles that vanishes at every closed point;
over a reduced scheme of finite type over $\C$, it therefore
vanishes identically. All subsequent identifications with
moduli of $R/\mathfrak m^L$-modules are made after reduction.

A quotient $R\otimes W\twoheadrightarrow M$ is specified by
a linear map $\phi\colon W\to M$ whose image generates $M$
as an $R$-module. We call $\phi$ a generating framing.
When $\dim W=1$, this is a cyclic vector
$v\in M$, meaning $\C[X_1,\ldots,X_r]v=M$.
Changing the basis of $M$ gives the frame bundle of the
tautological bundle. This construction, including its
scheme-theoretic form, is standard
\cite[Proposition~3.7 and \S3.4]{JelisiejewSivic}.
Our applications use its restriction to the reduced punctual
and equivariant loci.

Let $\mu_2=\{\pm1\}$ act on $\mathbb A^r$ by
$(x_1,\ldots,x_r)\mapsto(-x_1,\ldots,-x_r)$.
An equivariant module is a module $M$ with an involution
$\varepsilon$ satisfying
$\varepsilon(x_i u)=-x_i\varepsilon(u)$.
It decomposes as $M=M_+\oplus M_-$ into the two eigenspaces
of $\varepsilon$, and each $X_i$ interchanges these spaces.
We call the two characters even and odd, respectively.
For an invariant ideal $I$, the quotient $R/I$ inherits
this decomposition and its unit is even.
Invariance refers only to this parity decomposition; it
does not require homogeneity in the usual integer grading.

\subsection{Equivariant punctual sheaves and Quot schemes}
\label{subsec:mu2-AIII}

We first fix the dimensions of the two character spaces.
The resulting matrix problem is the nilpotent commuting
variety for a symmetric pair of type $\mathrm{AIII}$.

Fix vector spaces $V_+$ and $V_-$
of dimensions $m$ and $n$, where $m,n\geq0$ and $m+n\geq1$, and set
\[
  V=V_+\oplus V_-,
  \qquad
  G_{m,n}=\mathrm{GL}(V_+)\times\mathrm{GL}(V_-).
\]
The odd endomorphisms of $V$ form the space
\[
  \mathfrak p_{m,n}
  =\mathfrak p_{m,n}^+\oplus\mathfrak p_{m,n}^-,
  \qquad
  \mathfrak p_{m,n}^+=\operatorname{Hom}(V_-,V_+),
  \qquad
  \mathfrak p_{m,n}^-=\operatorname{Hom}(V_+,V_-),
\]
where each block is extended by zero on the other summand.
For $m,n\geq1$, this is the isotropy representation of the
Hermitian symmetric pair
\[
  \bigl(
    \mathfrak{sl}(V),
    \mathfrak s(\mathfrak{gl}(V_+)\oplus\mathfrak{gl}(V_-))
  \bigr)
\]
of type $\mathrm{AIII}$, with
$\dim\mathfrak p_{m,n}^+=mn$.
Here $\mathfrak s(\mathfrak{gl}(V_+)\oplus
\mathfrak{gl}(V_-))$ consists of block-diagonal endomorphisms
whose total trace is zero.
If either multiplicity is zero, then $\mathfrak p_{m,n}=0$.

An equivariant $R$-module structure on $V$ supported at the origin
is therefore a point of
\[
  \mathcal C_{m,n}
  :=\Cr\bigl(\N(\mathfrak p_{m,n})\bigr)
  =\{X_\bullet\in\mathfrak p_{m,n}^{\,r}:
       [X_i,X_j]=0\text{ for all }i,j,
       \ X_i\text{ nilpotent for all }i\},
\]
with its reduced structure.
Here $\N(\mathfrak p_{m,n})$ denotes the nilpotent
endomorphisms in $\mathfrak p_{m,n}$.
Indeed, equivariance makes the coordinate operators odd, while support
at the origin is equivalent to their nilpotence. For commuting
operators, individual nilpotence implies that the ideal they generate
acts nilpotently.

\begin{remark}
\label{rem:equivariant-stack}
Put $L=m+n$. Let $\mathfrak M^{m,n}_{r,0}$ be the reduction
of the moduli stack of $\mu_2$-equivariant
$R/\mathfrak m^L$-modules whose even and odd parts are locally
free of ranks $m$ and $n$. Its closed points are the
equivariant coherent sheaves supported at the origin with
these character multiplicities. Choosing frames of the two
character bundles gives
\[
 \mathfrak M^{m,n}_{r,0}
 \simeq[\mathcal C_{m,n}/G_{m,n}],
 \qquad
 \dim\mathfrak M^{m,n}_{r,0}=rmn-m^2-n^2.
\]
Here $[X/H]$ denotes the quotient stack, which retains
stabilisers and has dimension $\dim X-\dim H$.
The formula follows from \cref{thm:hermitian} when $m,n\ge1$;
if one multiplicity is zero, then $\mathcal C_{m,n}$ is a point.

For $m,n\ge1$, the images of
$(\mathfrak p_{m,n}^+)^r$ and $(\mathfrak p_{m,n}^-)^r$
are distinct maximal-dimensional irreducible components. They parametrise modules $M$ with
$\mathfrak mM\subseteq M_+$ and
$\mathfrak mM\subseteq M_-$, respectively. Indeed, each $X_i$ is odd,
so $\mathfrak mM\subseteq M_+$ holds exactly when every $X_i$ kills $M_+$,
that is, when $X_\bullet\in(\mathfrak p_{m,n}^+)^r$; then
$\mathfrak m^2M\subseteq\mathfrak mM_+=0$. The other case is symmetric.
Since $G_{m,n}$ is connected, the quotient preserves the
component correspondence. For $r\ge3$, these are the only
maximal-dimensional components by
\cref{thm:hermitian-max-components}; no such exhaustiveness
is asserted here for $r=2$.
\end{remark}

For an equivariant sheaf $\mathcal F$, we write
$H^0(\mathcal F)_\pm$ for the eigenspaces of the involution with eigenvalues
$\pm1$.
We next add a framing. Let $W=W_+\oplus W_-$ with
$\dim W_\pm=s_\pm$, where $s_+,s_-\geq0$.
We write $\mathsf Q^{m,n;s_+,s_-}_{r,0}$ for the reduced scheme
of equivariant quotients
\[
  \mathcal O_{\mathbb A^r}\otimes W
  \twoheadrightarrow\mathcal F
\]
supported at the origin and satisfying
\[
  \dim H^0(\mathcal F)_+=m,
  \qquad
  \dim H^0(\mathcal F)_-=n.
\]
Equivalently, this is the reduction of the open-and-closed
part of the $\mu_2$-fixed locus in the punctual Quot scheme
on which the two character ranks are $m$ and $n$.
The action on the source uses the indicated decomposition of $W$.
An invariant kernel inherits a unique quotient linearisation
making the quotient map equivariant.
In particular, for the trivial one-dimensional framing,
\[
 \mathsf Q^{m,n;1,0}_{r,0}
 =
 \bigl(\operatorname{Hilb}^{m+n}_0(\mathbb A^r)^{\mu_2}
                   _{(m,n)}\bigr)_{\mathrm{red}},
\]
where the subscript $(m,n)$ specifies the dimensions of the
even and odd parts of $R/I$.

The dimension bound below is obtained by adding the framing
parameters to the matrix variety and subtracting the parameters
for a change of basis. Equality requires the generating locus
to meet a maximal-dimensional matrix component.

\begin{theorem}\label{thm:equivariant-quot}
Let $r\ge2$, $m,n,s_+,s_-\ge0$, and $m+n\ge1$.
Then
\begin{equation}\label{eq:equiv-quot-bound}
 \dim\mathsf Q^{m,n;s_+,s_-}_{r,0}
 \le rmn+s_+m+s_-n-m^2-n^2.
\end{equation}
Equality holds if at least one of the following conditions holds:
\begin{align}
 &s_-\ge n,\qquad m\le s_++rn,
 \label{eq:plus-stability}\\
 &s_+\ge m,\qquad n\le s_-+rm.
 \label{eq:minus-stability}
\end{align}
If $r\ge3$ and $m,n\ge1$, these conditions are also necessary:
equality holds if and only if at least one holds.
When equality holds in this range, there is exactly one
maximal-dimensional component for each of
\eqref{eq:plus-stability} and \eqref{eq:minus-stability}
that holds.

For every $r\ge2$, if $m,n\ge1$, $s_+\ge m$, and $s_-\ge n$,
the scheme has at least two maximal-dimensional components, two of which are the closed loci
\[
 \mathfrak mH^0(\mathcal F)\subseteq H^0(\mathcal F)_+,
 \qquad
 \mathfrak mH^0(\mathcal F)\subseteq H^0(\mathcal F)_-.
\]
For $r\ge3$ there are no others; for $r=2$ there may be
(\cref{rem:equivariant-quot-surface}).
\end{theorem}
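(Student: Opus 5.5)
The plan is to realise $\mathsf Q^{m,n;s_+,s_-}_{r,0}$ as a free quotient of an open subset of a framed matrix variety. Let $F_{m,n}=\Hom(W_+,V_+)\oplus\Hom(W_-,V_-)$ be the space of equivariant framings $\phi\colon W\to V$, of dimension $s_+m+s_-n$. Let $U\subseteq\mathcal C_{m,n}\times F_{m,n}$ be the open locus where $\phi(W)$ generates $V$ as a module over $\C[X_1,\ldots,X_r]$. It is open because generation is a rank condition on the finitely many vectors $X^\alpha\phi(w)$ with $|\alpha|<L=m+n$.

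By the frame-bundle construction recalled in \cref{subsec:punctual-moduli} \cite[Proposition~3.7 and \S3.4]{JelisiejewSivic}, restricted to the $\mu_2$-fixed, reduced, punctual locus as in \cref{rem:equivariant-stack}, the group $G_{m,n}$ acts freely on $U$ and $U\to\mathsf Q^{m,n;s_+,s_-}_{r,0}$ is a principal $G_{m,n}$-bundle. Freeness holds because an automorphism fixing a generating framing and commuting with the $X_i$ is the identity. Hence
\[
\dim\mathsf Q^{m,n;s_+,s_-}_{r,0}=\dim U-m^2-n^2 .
\]
Since $G_{m,n}$ is connected, irreducible components correspond bijectively. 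This identification of the reduced equivariant Quot scheme with $U/G_{m,n}$ is the step I expect to need the most care, and I would handle it by citing Jelisiejew--\v{S}ivic and taking $\mu_2$-fixed points of their presentation.

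\emph{The bound.} We have $\dim U\le\dim\mathcal C_{m,n}+s_+m+s_-n$. By \cref{rem:equivariant-stack} (that is, \cref{thm:hermitian} when $m,n\ge1$), $\dim\mathcal C_{m,n}=rmn$. This gives \eqref{eq:equiv-quot-bound}; when $mn=0$, $\mathcal C_{m,n}$ is a point and the same count applies.

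\emph{The equality case.} Equality holds if and only if $U$ meets $Z\times F_{m,n}$ for some irreducible component $Z$ of $\mathcal C_{m,n}$ with $\dim Z=rmn$. In that case $U\cap(Z\times F_{m,n})$ is a dense open subset of the irreducible variety $Z\times F_{m,n}$, so its closure gives one maximal-dimensional component of $U$. I would test the two linear components directly.

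For $Z=(\mathfrak p^+_{m,n})^r$, each $X_i$ kills $V_+$ and maps $V_-$ into $V_+$. The submodule generated by $\phi(W)$ is therefore
\[
\phi(W_-)\oplus\Bigl(\phi(W_+)+\sum_i X_i\phi(W_-)\Bigr).
\]
It equals $V$ exactly when $\phi(W_-)=V_-$, which needs $s_-\ge n$, and $\phi(W_+)+\sum_iX_iV_-=V_+$. For generic $X_i\in\Hom(V_-,V_+)$ and generic $\phi$, the latter space has dimension $\min\{m,\,s_++rn\}$. So $U$ meets this component if and only if \eqref{eq:plus-stability} holds. The component $(\mathfrak p^-_{m,n})^r$ gives \eqref{eq:minus-stability} symmetrically. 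This proves sufficiency for every $r\ge2$, and for $mn=0$ it is the condition $s_+\ge m$ or $s_-\ge n$ respectively.

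For $r\ge3$ and $m,n\ge1$, \cref{thm:hermitian-max-components} says these are the only maximal-dimensional components of $\mathcal C_{m,n}$. This yields both necessity and the count: one component of $\mathsf Q$ for each condition that holds.

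\emph{The modular description.} When $s_+\ge m$ and $s_-\ge n$, both conditions hold. The component coming from $(\mathfrak p^\pm_{m,n})^r$ is the closed locus $\mathfrak mH^0(\mathcal F)\subseteq H^0(\mathcal F)_\pm$, by the identification in \cref{rem:equivariant-stack}. This gives at least two maximal-dimensional components for every $r\ge2$, and exactly these two for $r\ge3$.
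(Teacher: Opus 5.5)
Your proposal is correct and follows essentially the same route as the paper. Both arguments use the open generating locus in $\mathcal C_{m,n}\times\bigl(\Hom(W_+,V_+)\oplus\Hom(W_-,V_-)\bigr)$ and the free $G_{m,n}$-torsor onto $\mathsf Q^{m,n;s_+,s_-}_{r,0}$, bound via $\dim\mathcal C_{m,n}=rmn$, check directly when the generating locus meets $(\mathfrak p^{\pm}_{m,n})^r\times F$, and invoke \cref{thm:hermitian-max-components} for necessity and the component count when $r\ge3$.
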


\begin{proof}
Put
\[
 F=\Hom(W_+,V_+)\oplus\Hom(W_-,V_-),\qquad
 \dim F=s_+m+s_-n.
\]
The generating locus
\[
 \mathcal U
 =\{(X_\bullet,\phi)\in\mathcal C_{m,n}\times F:
                              R\phi(W)=V\}
\]
is open. To see this, put $L=m+n$. Simultaneous strict
triangularisation implies that every product of $L$ coordinate
operators is zero. Generation is therefore equivalent to
surjectivity of the finite matrix whose columns are
$X_1^{\alpha_1}\cdots X_r^{\alpha_r}\phi(w)$, for basis
vectors $w$ in $W$ and nonnegative integers $\alpha_i$ with
$\sum_i\alpha_i<L$. This is an open rank condition.
The map from $\mathcal U$ to $\mathsf Q^{m,n;s_+,s_-}_{r,0}$ is the
product of the frame bundles of the two tautological character
bundles, hence a Zariski locally trivial $G_{m,n}$-torsor.
The action is
\[
 g\cdot(X_\bullet,\phi)
   =\bigl((gX_ig^{-1})_{i=1}^r,g\phi\bigr)
 \qquad(g\in G_{m,n}).
\]
An automorphism commuting
with the $X_i$ and fixing $\phi(W)$ pointwise fixes its entire $R$-span,
so the action is free. Thus
\[
 \dim\mathsf Q^{m,n;s_+,s_-}_{r,0}
 =\dim\mathcal U-m^2-n^2.
\]
Since $\dim\mathcal C_{m,n}=rmn$, this proves
\eqref{eq:equiv-quot-bound}.

Write $\phi_\pm:W_\pm\to V_\pm$ for the two restrictions
of the framing. On $(\mathfrak p_{m,n}^+)^r$, all operators kill $V_+$.
The framing generates precisely when
\[
 \phi_-(W_-)=V_-,
 \qquad
 \phi_+(W_+)+\sum_iX_i(V_-)=V_+.
\]
Such a framing and tuple exist if and only if
$s_-\ge n$ and $s_++rn\ge m$:
after choosing a surjection $W_-\to V_-$, the second condition
asks for a surjection $W_+\oplus V_-^{\oplus r}\to V_+$.
This proves the equivalence with \eqref{eq:plus-stability}.
The analogous calculation on $(\mathfrak p_{m,n}^-)^r$
gives \eqref{eq:minus-stability}.
Each nonempty intersection is open in a variety of dimension
$rmn+\dim F$, so either condition implies equality in the bound.

Now assume $r\ge3$ and $m,n\ge1$.
The symmetric pair of type $\mathrm{AIII}$ has simple ambient
Lie algebra $\mathfrak{sl}_{m+n}$ and $d=mn$.
By \cref{thm:hermitian-max-components}, the only
irreducible components of $\mathcal C_{m,n}$ of dimension $rmn$
are
\[
 Z_+=(\mathfrak p_{m,n}^+)^r,\qquad
 Z_-=(\mathfrak p_{m,n}^-)^r.
\]
Therefore $\dim\mathcal U=rmn+\dim F$ if and only if
$\mathcal U$ meets $(Z_+\times F)\cup(Z_-\times F)$.
The preceding calculation identifies these two possibilities
with \eqref{eq:plus-stability} and \eqref{eq:minus-stability}.
Each nonempty intersection gives one maximal-dimensional
component, and no component of smaller ambient dimension can
give another. The frame torsor has connected irreducible fibre
$G_{m,n}$ and is locally trivial, so it induces the same
component correspondence on the Quot scheme.
This proves necessity and the component count.

Finally, for any $r\ge2$, the hypotheses $m,n\ge1$,
$s_+\ge m$, and $s_-\ge n$ ensure that both intersections
are nonempty. The two linear components are distinct, and, by \cref{rem:equivariant-stack}, their images under the torsor are exactly the two stated loci. 
These loci are closed because the conditions are the vanishing
of the induced maps between the tautological character bundles.
\end{proof}

\begin{remark}\label{rem:equivariant-quot-degenerate}
The assumptions $m,n\ge1$ in the component count are essential.
If $n=0$ and $s_+\ge m$, then the coordinate operators vanish,
and taking the kernel of the framing gives
\[
 \mathsf Q^{m,0;s_+,s_-}_{r,0}\cong\Gr(s_+-m,W_+).
\]
Here $\Gr(k,W)$ denotes the Grassmannian of $k$-dimensional
linear subspaces of $W$. The Quot scheme is empty when $s_+<m$.
The analogous statement holds
when $m=0$, with the two parities interchanged.
In these cases the two linear families coincide.
\end{remark}

\begin{remark}\label{rem:equivariant-quot-surface}
For $r=2$, the preceding construction has a standard quiver
interpretation. Write
\[
  X_1=\begin{pmatrix}0&A_1\\B_1&0\end{pmatrix},\qquad
 X_2=\begin{pmatrix}0&A_2\\B_2&0\end{pmatrix}.
\]
The equation $[X_1,X_2]=0$ is equivalent to
$A_1B_2-A_2B_1=0$ and $B_1A_2-B_2A_1=0$.
The Kronecker quiver has two vertices and two arrows from
one vertex to the other; its double adds a reverse arrow
for each arrow. Assigning $A_1,A_2$ to the two forward arrows
and $B_2,-B_1$ to the reverse arrows turns the displayed equations
into the preprojective relations.

Consider the Nakajima quiver variety for this doubled quiver,
with dimension vector $(m,n)$, framing vector $(s_+,s_-)$,
and complex moment-map parameter zero. Choose the generic
stability condition requiring the images of the framing maps
$W_\pm\to V_\pm$ to generate the representation.
Its Lagrangian fibre is the reduced fibre over zero of the
projective morphism from the stable quiver variety to its
affine quotient. By \cite[Lemma~5.9]{NakajimaALE}, transposed to the stability convention just specified, this fibre consists
of jointly nilpotent representations with the reverse framing
maps $V_\pm\to W_\pm$ equal to zero. It is therefore
isomorphic to $\mathsf Q^{m,n;s_+,s_-}_{2,0}$.
Nakajima's theorem \cite[Theorem~5.8]{NakajimaALE} consequently
gives the stronger known statement that every nonempty
$\mathsf Q^{m,n;s_+,s_-}_{2,0}$ is equidimensional of dimension
\[
  2mn+s_+m+s_-n-m^2-n^2.
\]
In particular, when $r=2$, nonemptiness alone implies equality
in \eqref{eq:equiv-quot-bound}. The conditions
\eqref{eq:plus-stability} and \eqref{eq:minus-stability}
are sufficient but need not be necessary in this case.
For example, the invariant quotient $\C[x,y]/(x^4,y)$ has
character multiplicities $(2,2)$ and trivial one-dimensional
framing, although neither condition holds.
The necessity and exact component count in
\cref{thm:equivariant-quot} concern $r\ge3$.
\end{remark}

\subsection{Gorenstein loci and socle parity}
\label{subsec:gorenstein-applications}

We now pass from arbitrary quotient modules to quotient
algebras with a duality property. For a local quotient
$A=R/I$ supported at the origin, let
\[
 \mathfrak m_A=\mathfrak m A,\qquad
 \operatorname{soc}(A)
 =\operatorname{Ann}_A(\mathfrak m_A)
 =\{a\in A:\mathfrak m_Aa=0\}.
\]
The last space is the socle of $A$.
A local Artinian algebra is Gorenstein if and only if its
socle is one-dimensional. We write
\[
 \mathcal H_{r,N}
 :=
 \operatorname{Hilb}^{N,\mathrm{Gor}}_0(\mathbb A^r)
 =
 \{[I]\in\operatorname{Hilb}^N_0(\mathbb A^r):
                  \dim_\C\operatorname{soc}(R/I)=1\}.
\]
We equip this locus with its reduced induced structure.
It is open in the punctual Hilbert scheme: in a finite flat
family, the socle is the kernel of the multiplication map
$a\mapsto(x_1a,\ldots,x_ra)$, and the condition that this
kernel have its minimum possible dimension, namely one,
is open. This locus parametrises embedded quotient algebras
$R/I$, rather than abstract algebras up to isomorphism. The broader geometry of Gorenstein loci in the full Hilbert
scheme, including smoothability and irreducibility, is studied
in \cite{CasnatiJelisiejewNotari}. Those results concern loci
whose support can vary, whereas we fix the support at the origin.

A \emph{Frobenius functional} on $A$ is a linear map
$\lambda:A\to\C$ for which the symmetric bilinear form
\[
 b_\lambda(u,w):=\lambda(uw)
\]
is nondegenerate. For a local Gorenstein algebra, this is
equivalent to $\lambda|_{\operatorname{soc}(A)}\ne0$:
the radical of $b_\lambda$ is an ideal, and every nonzero
ideal meets the one-dimensional socle. Conversely, if
$\lambda$ vanishes on the socle, then every socle element
lies in the radical. Thus the Frobenius functionals form
the complement of a hyperplane in $A^*$.
The Frobenius pairing is classical; see
\cite[Chapter~1]{IarrobinoKanev} and
\cite[Example~2.1]{JelisiejewIarrobino}.
Jelisiejew gives a matrix presentation of the oriented Gorenstein
locus, where the self-duality is considered up to a nonzero scalar
\cite[Proposition~1.7]{JelisiejewIarrobino}.
In the dimension counts below, we retain the functional itself,
rather than its scalar class. 

Suppose now that $A=R/I$ is a $\mu_2$-invariant Gorenstein
quotient, and let $\varepsilon$ be its induced involution.
Its one-dimensional socle is preserved by $\varepsilon$;
we call it \emph{even} or \emph{odd} according as
$\varepsilon$ acts on it by $+1$ or $-1$.
A functional is even or odd if
$\lambda\circ\varepsilon=\lambda$ or
$\lambda\circ\varepsilon=-\lambda$, respectively.
The two projections are
\[
 \lambda_+=\tfrac12(\lambda+\lambda\circ\varepsilon),
 \qquad
 \lambda_-=\tfrac12(\lambda-\lambda\circ\varepsilon).
\]
If $\lambda$ is Frobenius, the part with the same parity as the socle
remains nonzero there and is therefore Frobenius. An odd functional
vanishes on $A_+$, and an even one on $A_-$. Since $A_+A_+$ and $A_-A_-$
lie in $A_+$, an odd Frobenius functional makes $A_+$ and $A_-$
isotropic and pairs them perfectly, so they have equal dimension; an even
Frobenius functional makes them orthogonal and separately nondegenerate.

For $n\ge1$, let $\mathsf G^{\mathrm{odd}}_{r,2n}$ be the
reduced locus in $\mathcal H_{r,2n}^{\mu_2}$ with odd socle;
its character multiplicities are necessarily $(n,n)$.
For $m,n\ge0$ with $m+n\ge1$, let
$\mathsf G^{\mathrm{even}}_{r,(m,n)}$ be the reduced locus
in $\mathcal H_{r,m+n}^{\mu_2}$ with even socle and character
multiplicities $(m,n)$.
These loci are open and closed in the corresponding reduced
fixed Gorenstein locus, hence locally closed in the punctual
Hilbert scheme. Indeed, the character ranks are locally
constant. Moreover, on the reduced Gorenstein locus the
tautological map $a\mapsto(x_1a,\ldots,x_ra)$ has constant
kernel dimension one, so its kernel is the socle line bundle;
on the fixed locus its character is locally constant.
All algebras here are ordinarily commutative; no
supercommutative sign rule is imposed.

The following estimates arise from symmetric endomorphisms,
odd symplectic endomorphisms, and odd orthogonal endomorphisms,
respectively. The cyclic locus need not meet a
maximal-dimensional component of the ambient commuting variety,
so the estimates need not be equalities.

\begin{theorem}\label{thm:equivariant-gorenstein}
Let $r\ge2$. For $N\geq2$,
\begin{equation}\label{eq:gorenstein-bound}
  \dim\mathcal H_{r,N}
  \leq
  \left\lfloor
    \frac r2\left(\frac{N(N+1)}2-1\right)
  \right\rfloor
  -\frac{N(N-1)}2.
\end{equation}
For $n\geq1$,
\begin{equation}\label{eq:odd-gorenstein-bound}
  \dim\mathsf G^{\mathrm{odd}}_{r,2n}
  \leq\frac{rn(n+1)}2-n^2.
\end{equation}
For $m\geq2$ and $n\geq1$,
\begin{equation}\label{eq:even-gorenstein-bound}
  \dim\mathsf G^{\mathrm{even}}_{r,(m,n)}
  \leq
  \left\lfloor\frac{rmn}{2}\right\rfloor
  -\binom m2-\binom n2.
\end{equation}
The ordinary estimate also holds at any smooth point of a complex
$r$-dimensional variety. The hypotheses in
\eqref{eq:even-gorenstein-bound} are not restrictive: the length-one
Gorenstein locus is a point, and every invariant even-socle quotient of
length greater than one has $m\ge2$ and $n\ge1$, as the proof shows.
\end{theorem}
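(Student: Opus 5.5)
The plan is to realise each Gorenstein locus as a quotient of a space of commuting self-adjoint nilpotent tuples with a cyclic vector by a free group action, and then to apply \cref{thm:universal} to the appropriate symmetric pair. In all three cases the dimension count has the same shape. The tuples contribute at most $\lfloor r\dim\p/2\rfloor$. The cyclic vector (the image of $1$) and the Frobenius functional contribute the same number of parameters, so they cancel. The symmetry group of the form is then subtracted.

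\textbf{The ordinary locus.} Let $A=R/I$ be local Gorenstein of length $N$, and let $\lambda$ be a Frobenius functional on it. Since $b_\lambda(x_iu,w)=\lambda(x_iuw)=b_\lambda(u,x_iw)$, the multiplication operators are $b_\lambda$-self-adjoint. In a $b_\lambda$-orthonormal basis they are therefore commuting nilpotent symmetric matrices, hence traceless. They form a point of $\Cr(\N(\p))$ for the pair $(\mathfrak{sl}_N,\mathfrak{so}_N)$ with involution $X\mapsto -X^{T}$, where $\dim\p=N(N+1)/2-1$.

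Consider the variety $\mathcal V$ of pairs $(X_\bullet,v)$ with $X_\bullet\in\Cr(\N(\p))$ and $v\in\C^N$ cyclic; cyclicity is an open condition, as in the proof of \cref{thm:equivariant-quot}. The inverse construction sends $(X_\bullet,v)$ to the ideal $I=\{f: f(X_\bullet)v=0\}$ and the functional $\lambda(f)=v^{T}f(X_\bullet)v$. This functional is Frobenius because $b_\lambda(f,g)=(f(X)v)^{T}(g(X)v)$ is the standard form, transported along the isomorphism $A\cong\C^N$, $f\mapsto f(X)v$.

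The orthogonal group $O_N$ acts freely on $\mathcal V$: an element commuting with all $X_i$ and fixing $v$ fixes the $R$-span of $v$. The quotient is the bundle over $\mathcal H_{r,N}$ of Frobenius functionals, an open subset of the rank-$N$ tautological dual bundle. This gives
\[
\dim\mathcal H_{r,N}+N+\tbinom N2=\dim\mathcal V\le \Bigl\lfloor\frac r2\Bigl(\frac{N(N+1)}2-1\Bigr)\Bigr\rfloor+N,
\]
which is \eqref{eq:gorenstein-bound}. The smooth-point statement follows because the punctual locus depends only on the completed local ring, which is isomorphic to $\C[[x_1,\dots,x_r]]$.

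\textbf{The odd-socle locus.} Take $\lambda$ odd, as justified before the theorem. Then $A_\pm$ are $b_\lambda$-isotropic and each has dimension $n$. The twisted form $\omega(u,w)=b_\lambda(\varepsilon u,w)$ is alternating and nondegenerate. For odd self-adjoint $X$ one has $\varepsilon X=-X\varepsilon$, hence
\[
\omega(Xu,w)=-b_\lambda(X\varepsilon u,w)=-\omega(u,Xw).
\]
So the $X_i$ lie in the odd part of $\mathfrak{sp}_{2n}$ with respect to the grading, which is $\p$ for $\mathrm{CI}(n)$ with $\dim\p=n(n+1)$. The structure group preserving $\omega$ and the grading is $\GL_n$. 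The cyclic vector lies in $A_+$, giving $n$ parameters, and the odd Frobenius functionals form an open subset of $A_-^*$, also $n$ parameters. The same torsor count gives $rn(n+1)/2+n-n-n^2$.

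\textbf{The even-socle locus.} Take $\lambda$ even. Then $b_\lambda$ is nondegenerate on $A_+$ and on $A_-$ separately, and $b_\lambda(\varepsilon u,w)$ is a nondegenerate symmetric form. The same computation shows the odd $X_i$ are skew for it, so they lie in $\p$ of $(\mathfrak{so}_{m+n},\mathfrak{so}_m\oplus\mathfrak{so}_n)$, with $\dim\p=mn$. The group is $O_m\times O_n$, of dimension $\binom m2+\binom n2$. The cyclic vector and the functional each contribute $m$ parameters and cancel.

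For the hypotheses, I will check that the even unit and the even socle are linearly independent when $N>1$, which forces $m\ge2$, and that $\mathfrak m_A\ne0$ forces $n\ge1$. Then $m+n\ge3$, so $\mathfrak{so}_{m+n}$ is semisimple and \cref{thm:universal} applies.

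\textbf{Main obstacle.} I expect the hardest step to be making the torsor identification rigorous on reduced loci. This requires three things:
\begin{itemize}
\item that orthonormal (respectively symplectic, graded orthonormal) frames of the tautological bundle with its form exist Zariski or étale locally, so that the quotient is a genuine torsor and dimensions subtract;
\item that the functional-bundle fibre has the stated dimension over each of the three loci;
\item that parity is respected in the equivariant cases.
\end{itemize}
I would handle this as in \cref{thm:equivariant-quot}. Dimension only requires a surjective morphism $\mathcal V\to\{(I,\lambda)\}$ whose fibres are $O_N$-orbits, respectively $\GL_n$- or $O_m\times O_n$-orbits, of full group dimension, which follows from freeness. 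It therefore suffices to check fibrewise surjectivity, which is the existence of an orthonormal basis for a nondegenerate form over $\C$.
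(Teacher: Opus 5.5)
Your proposal is correct and follows essentially the same route as the paper. Both use cyclic self-adjoint nilpotent tuples for the pairs of types $\mathrm{AI}$, $\mathrm{CI}$ and $\mathrm{BDI}$, apply the universal bound, cancel the cyclic-vector parameters against the functional parameters, and subtract the frame group $\mathrm O_N$, $\mathrm{GL}_n$ or $\mathrm O_m\times\mathrm O_n$. The only variation is in the even-socle case: you twist the form to $b_\lambda(\varepsilon u,w)$ and keep the operators $X_i$, whereas the paper keeps $b_\lambda$ and twists the operators to $Y_i=\varepsilon X_i$. The two are equivalent, and your version avoids the sign bookkeeping the paper needs to show that nilpotence and cyclic spans are preserved.
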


\begin{proof}
\emph{The ordinary locus.}
Fix a nondegenerate symmetric form $b$ on $V=\C^N$.
Write $\mathrm O(V,b)$ for its isometry group, consisting
of $g\in\mathrm{GL}(V)$ such that $b(gu,gw)=b(u,w)$ for
all $u,w\in V$. Put
\[
  \mathfrak p_{\mathrm{AI}}
  =\{X\in\mathfrak{sl}(V):b(Xu,w)=b(u,Xw)
                                      \text{ for all }u,w\in V\}.
\]
This is the $(-1)$-eigenspace of the symmetric pair
$(\mathfrak{sl}(V),\mathfrak{so}(V,b))$, and
\[
  \dim\mathfrak p_{\mathrm{AI}}=\frac{N(N+1)}2-1.
\]
Consider the open cyclic locus
\[
  \mathcal U_{\mathrm{AI}}
  =\{(X_\bullet,v)\in
       \Cr(\N(\mathfrak p_{\mathrm{AI}}))\times V:
       \C[X_1,\ldots,X_r]v=V\}.
\]
For $(X_\bullet,v)\in\mathcal U_{\mathrm{AI}}$, the kernel
$I$ of $R\to V$, $f\mapsto f(X_\bullet)v$, is an ideal:
if $f(X_\bullet)v=0$, commutativity gives
$(gf)(X_\bullet)v=g(X_\bullet)f(X_\bullet)v=0$.
Cyclicity identifies $A=R/I$ with $V$ as an $R$-module,
and the class of $1$ corresponds to $v$.
Since the $X_i$ are commuting nilpotent operators, $A$ is
supported at the origin. Define
\[
  \lambda(\bar f)=b\bigl(v,f(X_\bullet)v\bigr).
\]
Self-adjointness and commutativity imply
\[
  \lambda(\bar f\bar g)
  =b\bigl(f(X_\bullet)v,g(X_\bullet)v\bigr).
\]
Thus the Frobenius pairing on $A$ is identified with $b$,
so it is nondegenerate and $A$ is Gorenstein.
Conversely, a quotient $A=R/I$ with a Frobenius functional
$\lambda$ and an isometry $(A,b_\lambda)\simeq(V,b)$
gives such a tuple, with $v$ the image of $1$.
The multiplication operators are nilpotent and hence traceless,
so they do lie in $\mathfrak p_{\mathrm{AI}}$.

Over $\mathcal H_{r,N}$, the Frobenius functionals form an
open subset of the dual of the rank-$N$ tautological bundle,
with every fibre nonempty and of dimension $N$.
Over this space, the isometries form an orthogonal frame
torsor, locally trivial in the \'etale topology, with relative
dimension $\dim\mathrm O(V,b)=N(N-1)/2$.
The functional and the frame can be recovered from
$(X_\bullet,v)$ by the displayed formulas. The same
constructions use the tautological multiplication maps in
families and identify this frame torsor, after reduction,
with $\mathcal U_{\mathrm{AI}}$.
Its group action is free: an automorphism commuting with all
$X_i$ and fixing $v$ fixes every $f(X_\bullet)v$ and hence
all of $V$. Consequently,
\[
  \dim\mathcal U_{\mathrm{AI}}
  =\dim\mathcal H_{r,N}+N+\frac{N(N-1)}2.
\]
On the other hand, \cref{thm:universal} gives
\[
  \dim\mathcal U_{\mathrm{AI}}
  \le\left\lfloor\frac r2\dim\mathfrak p_{\mathrm{AI}}\right\rfloor+N.
\]
Subtracting proves \eqref{eq:gorenstein-bound}.
At a smooth point of an $r$-dimensional complex variety,
the completed local ring is $\C[[x_1,\ldots,x_r]]$.
Every local quotient of length $N$ is annihilated by the
$N$th power of the maximal ideal. The reduced punctual
length-$N$ loci therefore depend only on this finite
truncation, which gives the same estimate in formal local
coordinates.

\emph{Odd socle.} Let $A=R/I=A_+\oplus A_-$ be an invariant Gorenstein quotient with odd
socle, choose an odd Frobenius functional $\lambda$, and write
$b_\lambda(u,w)=\lambda(uw)$. Let $\varepsilon$ be the grading
involution. The form
\[
  \omega(u,w):=b_\lambda(\varepsilon u,w)
\]
is alternating and nondegenerate. Indeed, $\varepsilon$ is an algebra
automorphism and $\lambda\circ\varepsilon=-\lambda$, so
\[
  b_\lambda(\varepsilon u,w)
  =-\lambda\bigl(\varepsilon(\varepsilon(u)w)\bigr)
  =-b_\lambda(u,\varepsilon w),
\]
which proves skew-symmetry; composition with the invertible map
$\varepsilon$ preserves nondegeneracy.
Both $A_+$ and $A_-$ are isotropic of half the dimension,
so they are Lagrangian subspaces of dimension $n$.

Each coordinate multiplication $X_i$ is odd and $b_\lambda$-self-adjoint.
Therefore
\[
  \omega(X_i u,w)
  =-b_\lambda(X_i\varepsilon u,w)
  =-b_\lambda(\varepsilon u,X_i w)
  =-\omega(u,X_i w).
\]
Thus the $X_i$ lie in the odd part
$\mathfrak p_{\mathrm{CI}}$ of
\[
  \bigl(
    \mathfrak{sp}(A,\omega),
    \mathfrak{gl}(A_+)
  \bigr),
\]
the Hermitian pair of type $\mathrm{CI}$. Its positive and
negative grading pieces are
\[
  \mathfrak p_{\mathrm{CI}}^{+}
  =\mathfrak{sp}(A,\omega)\cap\operatorname{Hom}(A_-,A_+),
  \qquad
  \mathfrak p_{\mathrm{CI}}^{-}
  =\mathfrak{sp}(A,\omega)\cap\operatorname{Hom}(A_+,A_-),
\]
where each block is extended by zero on the other summand, and
\[
  \dim\mathfrak p_{\mathrm{CI}}^+
  =\frac{n(n+1)}2.
\]
Since $\omega(\varepsilon u,\varepsilon w)=-\omega(u,w)$, conjugation by
$\varepsilon$ preserves $\mathfrak{sp}(A,\omega)$ and is an involution
of it. Its fixed algebra consists of
the endomorphisms preserving the two Lagrangian summands;
the action on $A_+$ determines the negative dual action on
$A_-$. This identifies the fixed algebra with
$\mathfrak{gl}(A_+)$, as required for type $\mathrm{CI}$.

Conversely, on a fixed symplectic space $V=V_+\oplus V_-$
with $n$-dimensional Lagrangian summands, let
$\varepsilon|_{V_\pm}=\pm\mathrm{id}$. We now use the notation
\[
 \mathfrak p_{\mathrm{CI}}
 :=\{X\in\mathfrak{sp}(V,\omega):\varepsilon X=-X\varepsilon\}
\]
for the corresponding space on this fixed vector space and $\mathfrak p_{\mathrm{CI}}^{\pm}$ are defined as above, with $V_\pm$ in place of $A_\pm$.
Take a commuting nilpotent tuple in $\mathfrak p_{\mathrm{CI}}$
and an even cyclic vector $v\in V_+$.
The symmetric form
\[
  b(u,w)=\omega(\varepsilon u,w)
\]
makes every $X_i$ self-adjoint. Evaluation at $v$ identifies
$A=\C[X_1,\ldots,X_r]$ with $V$: an element annihilating $v$
annihilates its cyclic span and is therefore zero.
Equivalently, $A=R/I$ for the kernel of
$f\mapsto f(X_\bullet)v$.
This kernel is invariant under $\mu_2$ because the operators
are odd and $v$ is even. The functional
\[
  \lambda(a)=b(v,av)
\]
is Frobenius: self-adjointness gives $\lambda(aa')=b(av,a'v)$ for $a,a'\in A$, which
is nondegenerate under the identification $A\simeq V$. It is odd because
$b(V_+,V_+)=0$ and $v\in V_+$.

It follows that the even cyclic locus
\[
  \mathcal U_{\mathrm{odd}}
  =
  \left\{
  (X_\bullet,v)
  \in
  \Cr(\N(\mathfrak p_{\mathrm{CI}}))\times V_+
  \ \middle|\
  \C[X_1,\ldots,X_r]v=V
  \right\}
\]
parametrises these algebras with an odd Frobenius functional and a
graded symplectic frame, meaning a symplectic isomorphism
preserving the two character spaces. A change of frame on
$V_+$ determines the inverse dual change on $V_-$.
The frame group is therefore
$\mathrm{GL}(V_+)$, of dimension $n^2$, and the odd Frobenius
functionals form an open subset of $(A_-)^*$, of dimension $n$.
Hence
\[
  \dim\mathcal U_{\mathrm{odd}}
  =
  \dim\mathsf G^{\mathrm{odd}}_{r,2n}+n+n^2.
\]
By \cref{thm:hermitian}, the ambient commuting variety has
dimension $rn(n+1)/2$. Bounding the cyclic locus by its ambient
product and subtracting $n+n^2$ proves
\eqref{eq:odd-gorenstein-bound}.

\emph{Even socle.}
First, the even-socle locus with $m=1$ and $n\ge1$ is empty.
Indeed, its nonzero socle would equal $A_+=\C\cdot1$,
forcing $\mathfrak m_A=0$ and $A=\C$, contrary to $n\ge1$.
If $n=0$, every coordinate multiplication is zero, so the
only quotient algebra is $A=\C$, with $(m,n)=(1,0)$.
Thus every even-socle quotient of length greater than one
has $m\ge2$ and $n\ge1$, as in the statement.

We choose an even Frobenius functional and put
$b_\lambda(u,w)=\lambda(uw)$. Its character spaces are
orthogonal and separately nondegenerate.
We fix $V=V_+\oplus V_-$ with dimensions $m,n$ and a
nondegenerate symmetric form $b$ making the summands
orthogonal. A graded isometry $(A,b_\lambda)\simeq(V,b)$
transports the multiplication operators to $V$.
Let $\varepsilon|_{V_\pm}=\pm\mathrm{id}$.
For the coordinate multiplications $X_i$, set
\[
  Y_i=\varepsilon X_i.
\]
Oddness and self-adjointness of the $X_i$ imply
\[
  b(Y_i u,w)=-b(u,Y_i w),
  \qquad
  Y_iY_j=-X_iX_j.
\]
In particular, $Y_i^{2q}=(-1)^qX_i^{2q}$ for $q\ge1$,
so nilpotence is preserved. The $Y_i$ form a commuting nilpotent tuple in the
odd part
\[
 \mathfrak p_{\mathrm{BDI}}
 :=\{Y\in\mathfrak{so}(V,b):\varepsilon Y=-Y\varepsilon\}
\]
of the symmetric pair
\[
  \bigl(
    \mathfrak{so}(V,b),
    \mathfrak{so}(V_+)\oplus\mathfrak{so}(V_-)
  \bigr),
  \qquad
  \dim\mathfrak p_{\mathrm{BDI}}=mn.
\]

The transformation is inverted by $X_i=\varepsilon Y_i$.
For an even vector $v$ and $q\ge0$, we have
\[
 Y_{i_1}\cdots Y_{i_q}v
 =(-1)^{q(q+1)/2}X_{i_1}\cdots X_{i_q}v.
\]
Thus the two cyclic spans coincide.
Conversely, start with a commuting nilpotent tuple
$Y_\bullet$ in $\mathfrak p_{\mathrm{BDI}}$ and an even
cyclic vector $v$. The endomorphisms $X_i=\varepsilon Y_i$
are commuting, nilpotent, odd, and $b$-self-adjoint.
The kernel of $f\mapsto f(X_\bullet)v$ is an invariant
ideal $I$, and the resulting algebra $A=R/I$ is identified
with $V$.
The functional $\lambda(a)=b(v,av)$ is Frobenius by
self-adjointness and cyclicity. It is even because
$b(V_+,V_-)=0$, so the socle is even.

Thus the open locus
\[
 \mathcal U_{\mathrm{even}}
 :=\{(Y_\bullet,v)\in
       \Cr(\N(\mathfrak p_{\mathrm{BDI}}))\times V_+:
       \C[Y_1,\ldots,Y_r]v=V\}
\]
parametrises the even-socle quotients with an even Frobenius
functional and a graded orthogonal frame.
The even Frobenius functionals form an open subset of
$(A_+)^*$, of dimension $m$. Over this functional space,
the graded orthogonal frames form a torsor under
$\mathrm O(V_+,b|_{V_+})\times\mathrm O(V_-,b|_{V_-})$,
locally trivial in the \'etale topology and of relative
dimension $\binom m2+\binom n2$.
The action is free by the cyclicity argument used above.
Therefore
\[
  \dim\mathcal U_{\mathrm{even}}
  =
  \dim\mathsf G^{\mathrm{even}}_{r,(m,n)}
  +m+\binom m2+\binom n2.
\]
We now apply \cref{thm:universal} to the ambient commuting
variety and subtract the last three terms, obtaining
\eqref{eq:even-gorenstein-bound}.
Here $m+n\geq3$, so the orthogonal Lie algebra is semisimple;
the case $m+n=4$ is covered by the semisimple form of that theorem.
\end{proof}

For clarity, the parameters removed in the proof are as follows:
\[
 \begin{array}{c|c|c}
 \text{locus}
 &\text{dimension of functional space}
 &\text{dimension of frame group}\\ \hline
 \mathcal H_{r,N}&N&\binom N2\\
 \mathsf G^{\mathrm{odd}}_{r,2n}&n&n^2\\
 \mathsf G^{\mathrm{even}}_{r,(m,n)}
       &m&\binom m2+\binom n2
 \end{array}
\]
In each row, the dimension of the functional space equals
the dimension of the allowed cyclic-vector space. These
contributions cancel, leaving the commuting-variety bound
minus the dimension of the frame group.

For $r\ge3$, the component classification of
\cref{thm:hermitian-max-components} improves the odd-socle estimate by
one.
\begin{corollary}\label{cor:odd-socle-strict}
For $r\ge3$ and $n\ge2$,
\[
 \dim\mathsf G^{\mathrm{odd}}_{r,2n}
 \le\frac{rn(n+1)}2-n^2-1.
\]
\end{corollary}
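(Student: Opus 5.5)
We reuse the setup from the odd-socle part of the proof of \cref{thm:equivariant-gorenstein}. There we have
\[
 \dim\mathcal U_{\mathrm{odd}}=\dim\mathsf G^{\mathrm{odd}}_{r,2n}+n+n^2,
\]
where $\mathcal U_{\mathrm{odd}}$ is the even cyclic locus in $\Cr(\N(\mathfrak p_{\mathrm{CI}}))\times V_+$. Hence it suffices to prove
\[
 \dim\mathcal U_{\mathrm{odd}}\le rd+n-1,\qquad d=\tfrac{n(n+1)}2.
\]
Since $\mathcal U_{\mathrm{odd}}$ is open in $\Cr(\N(\mathfrak p_{\mathrm{CI}}))\times V_+$, each irreducible component of $\mathcal U_{\mathrm{odd}}$ has the form $(Z\times V_+)\cap\mathcal U_{\mathrm{odd}}$ for some irreducible component $Z$ of the commuting variety. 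For $n\ge2$ the pair $\mathrm{CI}(n)$ is simple Hermitian and nonzero. It therefore suffices to show two things: components $Z$ with $\dim Z<rd$ contribute at most $rd-1+n$, and the components with $\dim Z=rd$ meet $\mathcal U_{\mathrm{odd}}$ in the empty set.

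The first point is immediate, since $\dim Z\le rd-1$. For the second, we apply \cref{thm:hermitian-max-components}(ii), which uses $r\ge3$. It says the only components of dimension $rd$ are $(\mathfrak p_{\mathrm{CI}}^+)^r$ and $(\mathfrak p_{\mathrm{CI}}^-)^r$. We must check that neither admits an even cyclic vector.

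\begin{itemize}
\item On $(\mathfrak p_{\mathrm{CI}}^-)^r$, every $X_i$ maps $V_+$ into $V_-$ and kills $V_-$. So for $v\in V_+$ the cyclic span is $\C v+\sum_iX_iv$. This span meets $V_+$ only in $\C v$, which is a proper subspace of $V_+$ because $n\ge2$. Hence $v$ is not cyclic.
\item On $(\mathfrak p_{\mathrm{CI}}^+)^r$, every $X_i$ kills $V_+$. So the span of $v\in V_+$ is $\C v$, which is again proper.
\end{itemize}

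Thus both maximal components are disjoint from $\mathcal U_{\mathrm{odd}}$. Every component of $\mathcal U_{\mathrm{odd}}$ then has dimension at most $(rd-1)+n$, and subtracting $n+n^2$ gives the claim.

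The only delicate point is that the dimension of $\mathcal U_{\mathrm{odd}}$ is the maximum over its irreducible components, each lying over a single component $Z$. This is routine, because irreducible components of a product with an affine space are products of components. The second point above, together with the hypothesis $r\ge3$ needed to invoke the classification, is the substantive input. The hypothesis $n\ge2$ enters only in showing that a line is proper in $V_+$. For $n=1$ the argument fails, since $(\mathfrak p^-)^r$ then does admit cyclic vectors.
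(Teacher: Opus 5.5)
Your proposal is correct and follows the paper's proof essentially step for step. In both, no even vector can be cyclic on $(\mathfrak p_{\mathrm{CI}}^{\pm})^r$ once $n\ge2$; \cref{thm:hermitian-max-components}(ii), for $r\ge3$, then bounds every remaining component by $rn(n+1)/2-1$; one adds the $n$ parameters of $v\in V_+$ and subtracts the $n+n^2$ functional and frame parameters. Your remark that the components of an open subset of $C_r(\N(\mathfrak p_{\mathrm{CI}}))\times V_+$ are traces of products $Z\times V_+$ only spells out a step the paper leaves implicit.
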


\begin{proof}
In type $\mathrm{CI}(n)$, a tuple in
$(\mathfrak p_{\mathrm{CI}}^+)^r$ kills $V_+$.
A tuple in $(\mathfrak p_{\mathrm{CI}}^-)^r$ maps $V_+$ into
$V_-$ and kills $V_-$.
In either case, the even part of the cyclic span of
$v\in V_+$ is just $\C v$, so $v$ cannot be cyclic if $n\ge2$.
The even cyclic locus therefore avoids both maximal-dimensional
components. By \cref{thm:hermitian-max-components}, every
other component of the commuting variety has dimension at
most $rn(n+1)/2-1$. After adding the $n$ parameters for
$v\in V_+$, the even cyclic locus therefore has dimension
at most $rn(n+1)/2-1+n$.
Subtracting the $n+n^2$ functional and frame parameters gives
the assertion.
\end{proof}

We now test the estimates of \cref{thm:equivariant-gorenstein} on
explicit families. For a punctual quotient $A=R/I$ of length
$N=\dim_\C A$, we write
$e(A)=\dim_\C(\mathfrak m_A/\mathfrak m_A^2)$ for its embedding dimension.
We call $A$ \emph{curvilinear} if $e(A)\le1$, equivalently if
$A\simeq\C[t]/(t^N)$. For $r=2$ the ordinary estimate
\eqref{eq:gorenstein-bound} reads $N-1$ and is attained: curvilinear
quotients are Gorenstein and form a dense open subset of
$\operatorname{Hilb}^N_0(\mathbb A^2)$, which has dimension $N-1$
\cite{Briancon}. For invariant quotients, the embedding dimension is
controlled by the odd part. The residue field $A/\mathfrak m_A$ is even,
so $A_-\subseteq\mathfrak m_A$, and $\mathfrak m_A/\mathfrak m_A^2$ is
spanned by the images of the coordinates, which are odd; hence
$e(A)\le\dim A_-$. The following proposition shows that the equivariant
estimates are attained in some cases and strict in others, even when the
relevant symmetric pair is Hermitian.

\begin{proposition}\label{prop:gorenstein-sharp}
Let $r\ge2$. The following statements hold:
\begin{enumerate}[label=\textup{(\roman*)}]
\item $\mathcal H_{r,2}=\mathsf G^{\mathrm{odd}}_{r,2}\cong\mathbb P^{r-1}$;
hence \eqref{eq:gorenstein-bound} for $N=2$ and
\eqref{eq:odd-gorenstein-bound} for $n=1$ are attained.
\item For $1\le n\le r$,
$\dim\mathsf G^{\mathrm{even}}_{r,(2,n)}=rn-\binom n2-1$; hence
\eqref{eq:even-gorenstein-bound} is attained for $m=2$.
\item $\dim\mathsf G^{\mathrm{even}}_{r,(3,2)}=2r-2$; hence
\eqref{eq:even-gorenstein-bound} is strict for $(m,n)=(3,2)$ and $r\ge3$.
\item $\dim\mathsf G^{\mathrm{odd}}_{r,4}=2r-2$, so
\cref{cor:odd-socle-strict} is sharp at $r=3$.
\item $\mathsf G^{\mathrm{even}}_{r,(m,2)}=\varnothing$ for $m\ge4$.
\end{enumerate}
\end{proposition}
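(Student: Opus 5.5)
The plan is to prove each part by classifying the relevant local algebras directly, instead of working through the commuting-variety bounds. The basic tool is the parity bookkeeping set up before the statement. The residue field is even, so $A_-\subseteq\mathfrak m_A$. The space $\mathfrak m_A/\mathfrak m_A^2$ is spanned by the odd images of the coordinates, which gives $e(A)\le\dim A_-$. A product of $q$ odd elements has parity $(-1)^q$. In each case I would first pin down the possible Hilbert functions and parities, then parametrise the resulting ideals explicitly and count dimensions. Finally I would compare with \eqref{eq:gorenstein-bound}, \eqref{eq:odd-gorenstein-bound}, \eqref{eq:even-gorenstein-bound} and \cref{cor:odd-socle-strict}.

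For (i), a colength-$2$ ideal supported at the origin has the form $I=\mathfrak m^2+H$ with $H$ a hyperplane of linear forms. Such an ideal is automatically $\mu_2$-invariant. Its socle is $\mathfrak m_A$, which is odd, and $A=\C\oplus\C t$ has multiplicities $(1,1)$. This gives $\mathcal H_{r,2}=\mathsf G^{\mathrm{odd}}_{r,2}\cong\mathbb P^{r-1}$, and both bounds evaluate to $r-1$. For (ii), take $m=2$ with even socle. Then $A_+=\C1\oplus\C s$ with $s$ spanning the socle, and $A_-\subseteq\mathfrak m_A$ has dimension $n$. Since $A_-A_-\subseteq A_+\cap\mathfrak m_A=\C s$, we get $A_-=$ the image of the linear forms and $\mathfrak m_A^3=0$. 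Nondegeneracy of the Frobenius pairing then says that $(u,w)\mapsto uw\in\C s$ is a nondegenerate quadratic form on $A_-$. Conversely, any surjection $R_1\to A_-$ (which needs $n\le r$) together with a nondegenerate quadric up to scale defines a homogeneous, hence invariant, ideal. The locus is therefore an open subset of a bundle over $\Gr(r-n,r)$ whose fibres are an open subset of $\mathbb P(\Sym^2 A_-^*)$. Its dimension is $n(r-n)+\tfrac{n(n+1)}2-1=rn-\binom n2-1$, which is exactly the right-hand side of \eqref{eq:even-gorenstein-bound} for $m=2$.

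For (iii), (iv) and (v), the key step is to show that $e(A)=2$ is impossible or does not give the prescribed socle parity, so that only curvilinear algebras remain. Suppose $e(A)=2$ and $\dim A_-=2$. Then $A_-$ is spanned by two odd lifts $y_1,y_2$ that are independent modulo $\mathfrak m_A^2$, so $A_-\cap\mathfrak m_A^2=0$. The odd part of $\mathfrak m_A^3$ lies in this intersection, so $y_iy_jy_k=0$. Consequently $\mathfrak m_A^3=0$ and $A_+=\C1\oplus\operatorname{span}\{y_iy_j\}$. The possible Hilbert functions are $(1,2,1)$, with even socle and multiplicities $(2,2)$, and $(1,2,2)$, $(1,2,3)$, whose socles have dimension at least $2$ and so are not Gorenstein. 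This gives (v): for $m\ge4$ a curvilinear algebra has $m\le n+1=3$ by alternation of parities, and the case $e(A)=2$ forces $m\le4$ with equality only for the non-Gorenstein $(1,2,3)$. The same analysis shows that in (iii) and (iv) the algebra must be $\C[t]/(t^N)$ with $t$ odd. For (iii) this is $N=5$, with parities giving $(3,2)$ and even socle $t^4$. For (iv) it is $N=4$, where $(1,1,1,1)$ gives odd socle $t^3$; the alternative $(1,2,1)$ has even socle and is excluded.

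It remains to count the invariant curvilinear ideals of lengths $4$ and $5$. Such an ideal is the kernel of $R\to\C[t]/(t^N)$, $x_i\mapsto a_it+b_it^3$, with $a\ne0$. The invariance forces the images of the $x_i$ to be odd, and $t^5=0$ kills the higher odd terms. Two such maps give the same kernel exactly when they differ by an odd automorphism $t\mapsto ct+dt^3$, which acts freely. The dimension is therefore $2r-2$ in both cases. In (iii) this is less than $3r-4$ precisely when $r\ge3$, so \eqref{eq:even-gorenstein-bound} is strict there. In (iv), at $r=3$, we get $4=6\cdot3-4-1$, so \cref{cor:odd-socle-strict} is attained. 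I expect the main obstacle to be justifying this normal form: every invariant curvilinear ideal must admit an odd uniformiser, and the orbit map must be a fibration, so that the parameter count is a genuine dimension of the reduced locus. I would handle this by averaging a uniformiser $t$ under $\varepsilon$: its odd part $\tfrac12(t-\varepsilon t)$ is still a uniformiser, because $t\notin\mathfrak m_A^2$ and $\mathfrak m_A/\mathfrak m_A^2$ is odd.
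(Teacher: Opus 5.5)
Your proposal is correct. For (i) and (iii)--(v) it follows the paper's proof: the paper also takes an odd lift of a generator of $\mathfrak m_A/\mathfrak m_A^2$. It rules out $e(A)=2$ in the same way, since $A_-\cap\mathfrak m_A^2=0$ kills every cubic monomial and so forces $\mathfrak m_A^2\subseteq\operatorname{soc}(A)$. It obtains $2r-2$ by counting maps $x_i\mapsto\alpha_it+\beta_it^3$ modulo the free action of $t\mapsto ct+dt^3$.

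The genuine difference is in (ii).

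\begin{itemize}
\item \emph{The paper's route.} It writes down the apolar quotients $R/\operatorname{Ann}(Q)$ of rank-$n$ quadrics and checks that they form an injective family of dimension $rn-\binom n2-1$. Equality then holds only because this matches the upper bound \eqref{eq:even-gorenstein-bound}, which was proved through the type $\mathrm{BDI}$ commuting variety.
\item \emph{Your route.} You classify every point of $\mathsf G^{\mathrm{even}}_{r,(2,n)}$ directly. From $A_+=\C1\oplus\C s$ you get $\mathfrak m_A^2=\C s$, $A_-=\operatorname{im}R_1$, and $I$ homogeneous. You then show $I$ is determined by $H=\ker(R_1\to A_-)$ together with the nondegenerate form $A_-\times A_-\to\C s$ up to scale.
\end{itemize}

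This is the same family, since your pair $(H,[q])$ is the dual description of the line $\C Q$. What your version buys is a computation of the dimension that is independent of \cref{thm:equivariant-gorenstein}. It identifies the whole locus with an open subset of a projective bundle over $\Gr(r-n,r)$, not just a subfamily meeting the bound. The cost is a few routine checks that the paper avoids:
\begin{itemize}
\item the converse data define an ideal whose quotient has one-dimensional even socle, which follows from nondegeneracy of $q$;
\item the parametrisation is a morphism, via a flat family of constant length $n+2$.
\end{itemize}

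Two harmless slips. Your $e(A)=2$ enumeration omits the Hilbert function $(1,2)$, which is non-Gorenstein for the same reason as $(1,2,2)$ and $(1,2,3)$. At $r=3$, the bound of \cref{cor:odd-socle-strict} is $\tfrac{3\cdot2\cdot3}{2}-4-1=4$, not $6\cdot3-4-1$; your conclusion is unaffected.
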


\begin{proof}
(i) A punctual quotient $A$ of length two has $\dim\mathfrak m_A=1$, so
$\mathfrak m_A^2=0$ by Nakayama's lemma. Hence $\mathfrak m^2\subseteq I$,
and $I$ is determined by the one-dimensional quotient
$\mathfrak m/\mathfrak m^2\to\mathfrak m_A$; these quotients form
$\mathbb P^{r-1}$. Such an ideal is homogeneous, hence invariant, and $A$
is Gorenstein with socle $\mathfrak m_A$, which is odd. Thus
$\mathcal H_{r,2}=\mathsf G^{\mathrm{odd}}_{r,2}\cong\mathbb P^{r-1}$,
and both bounds in question equal $r-1$.

(ii) Let $x_i$ act on $\C[y_1,\ldots,y_r]$ as $\partial/\partial y_i$.
A quadratic form $Q$ of rank $n$ has apolar ideal
$\operatorname{Ann}(Q)
=\{f\in R:f(\partial_{y_1},\ldots,\partial_{y_r})Q=0\}$.
It is homogeneous, hence invariant, and $A=R/\operatorname{Ann}(Q)$ is
Gorenstein with Hilbert function $(1,n,1)$, that is,
$\dim\mathfrak m_A^i/\mathfrak m_A^{i+1}=1,n,1$ for $i=0,1,2$
\cite{IarrobinoKanev}. Its character multiplicities are $(2,n)$, and its
socle $\mathfrak m_A^2$ is even. The line $\C Q$ is recovered from the
degree-two part of $\operatorname{Ann}(Q)$, so distinct projective forms
give distinct ideals. Since the differentiation maps have constant rank
on the rank-$n$ locus, the apolar quotients form an algebraic family of
constant length $n+2$. A rank-$n$ form is a nondegenerate form in the
variables of a unique $n$-dimensional subspace of
$\langle y_1,\ldots,y_r\rangle$, so the projective rank-$n$ forms have
dimension $n(r-n)+\binom{n+1}2-1=rn-\binom n2-1$. This is the value of
\eqref{eq:even-gorenstein-bound} for $m=2$.

(iii) Let $A$ be an invariant Gorenstein quotient with even socle and
character multiplicities $(3,2)$; then $e(A)\le\dim A_-=2$. Suppose
$e(A)=2$. Then $A_-\to\mathfrak m_A/\mathfrak m_A^2$ is an isomorphism,
so $A_-\cap\mathfrak m_A^2=0$. Every monomial of degree three in the
$x_i$ is odd and lies in $\mathfrak m_A^2$, hence vanishes. Thus
$\mathfrak m_A^3=0$ and $\mathfrak m_A^2\subseteq\operatorname{soc}(A)$,
so $\dim A\le1+2+1=4$, a contradiction. Hence $e(A)=1$, and an odd lift
$t$ of a generator of $\mathfrak m_A/\mathfrak m_A^2$ gives
$A\simeq\C[t]/(t^5)$ with $t$ odd. Conversely, this algebra has
character multiplicities $(3,2)$ and even socle $\C t^4$.

Equivariant surjections $R\to\C[t]/(t^5)$ are given by
$x_i\mapsto\alpha_it+\beta_it^3$ with $(\alpha_1,\ldots,\alpha_r)\ne0$.
They form an open subset of $\mathbb A^{2r}$, which carries a flat family
of quotients and hence a morphism to the locus, surjective by the
preceding paragraph. Two surjections have the same kernel exactly when
they differ by a parity-preserving automorphism $t\mapsto ct+dt^3$ with
$c\ne0$, and this two-dimensional group acts freely because the maps are
surjective. The fibre-dimension theorem gives
$\dim\mathsf G^{\mathrm{even}}_{r,(3,2)}=2r-2$, which is less than the
value $3r-4$ of \eqref{eq:even-gorenstein-bound} when $r\ge3$.

(iv) Let $A$ be an invariant Gorenstein quotient of length four with odd
socle, so with character multiplicities $(2,2)$. If $e(A)=2$, then $A$
has Hilbert function $(1,2,1)$, and
$\operatorname{soc}(A)=\mathfrak m_A^2$ is spanned by the images of the
even monomials $x_ix_j$, a contradiction. Hence $A\simeq\C[t]/(t^4)$
with $t$ odd, and the parametrisation of (iii), taken modulo $t^4$,
gives $\dim\mathsf G^{\mathrm{odd}}_{r,4}=2r-2$. For $r=3$ this equals
the bound $4$ of \cref{cor:odd-socle-strict}, whereas
\eqref{eq:odd-gorenstein-bound} gives $5$.

(v) Let $A$ be an invariant Gorenstein quotient of length $N$ with
$\dim A_-=2$. If $e(A)=2$, the argument of (iii) gives $N\le4$. If
$e(A)\le1$, then $A\simeq\C[t]/(t^N)$ with $t$ odd, whose odd part has
dimension $\lfloor N/2\rfloor$; hence $N\le5$. In either case
$m=N-2\le3$.
\end{proof}

\begin{remark}\label{rem:scheme-theoretic-applications}
The cyclic locus can avoid all maximal-dimensional matrix
components, as in \cref{cor:odd-socle-strict}.
Consequently, the Gorenstein bounds need not be equalities.
These arguments concern dimensions and components of reductions;
they do not establish reducedness or local smoothness of the
original moduli schemes.
\end{remark}

\begin{remark}\label{rem:applications-fields}
The results of this section hold over every algebraically closed field
$\kk$ of characteristic zero. The argument of \cref{prop:charzero}
applies, once one notes that the additional data used here are also
defined over $\overline{\mathbb Q}$: the punctual moduli spaces, their
cyclic loci, and the frame bundles.
\end{remark}

\end{document}